 \tikzstyle{vecArrow} = [thick, decoration={markings,mark=at position
 \newtheorem{theorem}{Theorem}
 \newtheorem{lemma}[theorem]{Lemma}
\newtheorem{assumption}[theorem]{Assumption}
 \theoremstyle{remark}
 \newtheorem{remark}[theorem]{Remark}
\theoremstyle{definition}
\newtheorem{algorithm}[theorem]{Algorithm}
\newtheorem{definition}{Definition}
 \numberwithin{theorem}{section}
 \numberwithin{equation}{section}
 \newcommand{\step}[1]{\par\noindent{\em Step #1.}}
 \newcommand{\ddiv}{\operatorname{div}}
 \newcommand{\tri}{\mathcal{T}}
 \newcommand{\edges}{\mathcal{E}}
 \newcommand{\Curl}{\operatorname{Curl}}
 \newcommand{\curl}{\operatorname{curl}}
 \newcommand{\Sym}{\mathbb{S}}
 \newcommand{\sym}{\operatorname{sym}}
 \newcommand{\R}{\mathbb{R}}
 \newcommand{\NC}{\mathrm{NC}}
\begin{document}

\author{M. Schedensack\thanks{Institut f\"ur Numerische Simulation, 
            Universit\"at Bonn, Wegelerstra{\ss}e 6, D-53115 Bonn, Germany
             }
} 
\title{A new discretization for $\lowercase{m}$th-Laplace equations\\ with arbitrary
       polynomial degrees\thanks{% 
        This work was supported by the Berlin Mathematical School.}
          }
\date{}
 \maketitle
 
\begin{abstract}
This paper introduces new mixed formulations and discretizations for
$m$th-Laplace equations of the form $(-1)^m\Delta^m u=f$ 
for arbitrary $m=1,2,3,\dots$ based on novel Helmholtz-type 
decompositions for tensor-valued functions. 
The new discretizations allow for ansatz spaces of arbitrary polynomial degree and 
the lowest-order choice coincides with the non-conforming FEMs of Crouzeix and 
Raviart for $m=1$ and of Morley for $m=2$.
Since the derivatives are directly approximated, the lowest-order 
discretizations consist of piecewise affine and piecewise constant 
functions for any $m=1,2,\dots$
Moreover, a uniform implementation for arbitrary $m$ is possible. 
Besides the a~priori and a~posteriori analysis, this paper proves optimal 
convergence rates for adaptive algorithms for the new discretizations.
\end{abstract}
 
\noindent
{\small\textbf{Keywords}
$m$th-Laplace equation, polyharmonic equation, non-conforming FEM,
mixed FEM, adaptive FEM, optimality
}

\noindent
{\small\textbf{AMS subject classification}
31A30, 35J30,
65N30, 65N12, 74K20
}

\section{Introduction}

This paper considers $m$th-Laplace equations of the form
\begin{align}\label{e:introHOP}
  (-1)^m\Delta^m u=f 
\end{align}
for arbitrary $m=1,2,3,\dots$
Standard conforming FEMs require ansatz spaces in $H^m_0(\Omega)$. To circumvent 
those high regularity requirements and resulting complicated finite elements,
non-standard methods are of high interest \cite{Morley1968,
EngelGarikipatiHughesLarsonMazzeiTaylor2002,Brenner2012,GudiNeilan2011}.
The novel Helmholtz decomposition of this paper
decomposes any (tensor-valued) $L^2$ function in an $m$th derivative and a 
symmetric part of a Curl. 
Given a tensor-valued function $\varphi$ which satisfies 
$-\ddiv^m\varphi=f$ in the weak sense, the $L^2$ projection of $\varphi$ to the 
space $D^m H^m_0(\Omega)$ of $m$th derivatives then coincides with the 
$m$th derivative of the exact solution of~\eqref{e:introHOP} 
(see Theorem~\ref{t:HOPexistence} below).
This results in novel mixed formulations and 
discretizations for \eqref{e:introHOP}.
This approach generalises the discretizations of~\cite{Schedensack2015,Schedensack2016}
from $m=1$ to $m\geq 1$.

The direct approximation of $D^m u$ instead of $u$ 
enables low order discretizations; only first derivatives appear in the 
symmetric part of the Curl and so the lowest order approach only requires piecewise 
affine functions for any $m$. In contrast to that, even interior penalty methods 
require piecewise quadratic \cite{Brenner2012} resp.\ piecewise cubic 
\cite{GudiNeilan2011} functions for $m=2$ resp.\ $m=3$.
Mnemonic diagrams in Figure~\ref{f:introTriangles} illustrate lowest-order standard 
conforming FEMs from \cite{Zenisek1970} and the lowest-order novel FEMs proposed in this 
work for $m=2,3$.
Since the proposed new FEMs differ only in the number of components in the ansatz spaces, 
an implementation of one single program, which runs for arbitrary order, is possible.
In particular, the system matrices are obtained by integration of standard FEM 
basis functions.

\begin{figure}
\begin{center}
\begin{tikzpicture}[x=0.75cm, y=0.75cm]
 \phantom{\draw (1,1.5) circle (10pt);}
 \node () at (-2.5,0.5){$m=2$};
 \draw (0,0)--(2,0)--(1,1.5)--cycle;
 \fill (0,0) circle (2pt);
 \fill (2,0) circle (2pt);
 \fill (1,1.5) circle (2pt);
 \draw (0,0) circle (4pt);
 \draw (2,0) circle (4pt);
 \draw (1,1.5) circle (4pt);
 \draw (0,0) circle (6pt);
 \draw (2,0) circle (6pt);
 \draw (1,1.5) circle (6pt);
 \draw[->] (1,0)->(1,-0.5);
 \draw[->] (1.5,0.75)->(2,1.083333);
 \draw[->] (0.5,0.75)->(0,1.083333);
\end{tikzpicture}
\hfill
\begin{tikzpicture}[x=0.75cm, y=0.75cm]
 \phantom{\draw[->] (1,0)->(1,-0.5);}
 \node () at (-0.5,1){$3\times$};
 \draw (0,0)--(2,0)--(1,1.5)--cycle;
 \fill (1,0.5) circle (2pt);
 \node () at (3.2,1){$2\times$};
 \draw (3.7,0)--(5.7,0)--(4.7,1.5)--cycle;
 \fill (3.7,0) circle (2pt);
 \fill (5.7,0) circle (2pt);
 \fill (4.7,1.5) circle (2pt);
\end{tikzpicture}\\
\begin{tikzpicture}[x=0.75cm, y=0.75cm]
 \node () at (-2.5,0.5){$m=3$};
 \draw (0,0)--(2,0)--(1,1.5)--cycle;
 \fill (0,0) circle (2pt);
 \fill (2,0) circle (2pt);
 \fill (1,1.5) circle (2pt);
 \fill (1,0.5) circle (2pt);
 \draw (0,0) circle (4pt);
 \draw (2,0) circle (4pt);
 \draw (1,1.5) circle (4pt);
 \draw (0,0) circle (6pt);
 \draw (2,0) circle (6pt);
 \draw (1,1.5) circle (6pt);
 \draw (0,0) circle (8pt);
 \draw (2,0) circle (8pt);
 \draw (1,1.5) circle (8pt);
 \draw (0,0) circle (10pt);
 \draw (2,0) circle (10pt);
 \draw (1,1.5) circle (10pt);
 % first normal derivatives
 \draw[->] (1,0)->(1,-0.5);
 \draw[->] (1.5,0.75)->(2,1.083333);
 \draw[->] (0.5,0.75)->(0,1.083333);
 % second normal derivatives
 \draw[->] (0.66666,0)->(0.66666,-0.4);
 \draw[->] (0.66666,-0.4)->(0.66666,-0.5);
 \draw[->] (1.66666,0.5)->(2.066666, 0.766666);
 \draw[->] (2.066666, 0.766666)->(2.166666, 0.83333);
 \draw[->] (0.33333,0.5)->(-0.0666666,0.766666);
 \draw[->] (-0.0666666,0.766666)->(-0.166666,0.83333);
 \draw[->] (1.33333,0)->(1.33333,-0.4);
 \draw[->] (1.33333,-0.4)->(1.33333,-0.5);
 \draw[->] (1.33333,1)->(1.73333,1.266666);
 \draw[->] ( 1.73333,1.266666)->(1.83333,1.333333);
 \draw[->] (0.666666,1)->(0.266666,1.266666);
 \draw[->] (0.266666,1.266666)->( 0.166666,1.333333);
\end{tikzpicture}
\hfill
\begin{tikzpicture}[x=0.75cm, y=0.75cm]
 \phantom{\draw[->] (1,0)->(1,-0.5);}
 \node () at (-0.5,1){$4\times$};
 \draw (0,0)--(2,0)--(1,1.5)--cycle;
 \fill (1,0.5) circle (2pt);
 \node () at (3.2,1){$3\times$};
 \draw (3.7,0)--(5.7,0)--(4.7,1.5)--cycle;
 \fill (3.7,0) circle (2pt);
 \fill (5.7,0) circle (2pt);
 \fill (4.7,1.5) circle (2pt);
\end{tikzpicture}
\end{center}
\caption{\label{f:introTriangles}Lowest order standard conforming 
 \cite{Ciarlet1978,Zenisek1970} and novel 
 FEMs for the problem $(-1)^m\Delta^m u=f$ for $m=2,3$.}
\end{figure}
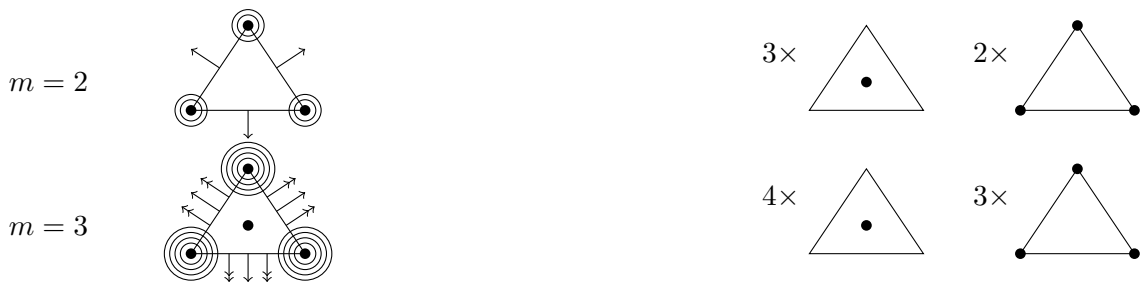

For $m=1,2$ and the lowest polynomial degree in the ansatz spaces, 
discrete Helmholtz decompositions of 
\cite{ArnoldFalk1989, CarstensenGallistlHu2014} prove that 
the discrete solutions are piecewise gradients (resp.\ Hessians) 
of Crouzeix-Raviart \cite{CrouzeixRaviart1973}
(resp.\ Morley \cite{Morley1968}) finite element functions and therefore the new 
discretizations can be regarded as a generalization of those 
non-conforming FEMs to higher polynomial degrees and higher-order problems.
The generalization of~\cite{WangXu2013} of the 
non-conforming Crouzeix-Raviart and Morley FEMs to $m\geq 3$ is restricted 
to a space dimension $\geq m$.

In the context of the novel (mixed) formulations, the discretizations 
appear to be conforming. The new 
generalization to higher polynomial degrees proposed in this paper appears to be 
natural in the sense that the inherent properties 
of the lowest order discretization carry over to higher polynomial ansatz 
spaces, namely an inf-sup condition, the conformity of the method, and a 
crucial projection property (also known as integral mean property of the 
non-conforming interpolation operator).

Besides the a~priori and a~posteriori error analysis, this paper proves optimal 
convergence rates for an adaptive algorithm, which are also observed in 
the numerical experiments from Section~\ref{s:HOPnumerics}.

The remaining parts of this paper are organised as follows.
Section~\ref{s:HOPnotation} introduces some notation
while some preliminary results are proved in Section~\ref{s:HOPprelimresults}.
The proposed discretization of \eqref{e:introHOP} in 
Section~\ref{s:HOPformDiscrete} is based on a novel 
Helmholtz decomposition for higher derivatives which is stated and proved in 
Section~\ref{s:HOPhelmholtzdecomposition}.  
Section~\ref{s:HOPafem} introduces an adaptive algorithm and 
proves optimal convergence rates. Section~\ref{s:HOPnumerics} concludes the 
paper with numerical experiments on fourth- and sixth-order problems.

Throughout this paper, let $\Omega\subseteq\R^2$ be a bounded, polygonal, simply connected
Lip\-schitz domain.
Standard notation on Lebesgue and Sobolev
spaces and their norms is employed with $L^2$ scalar product 
$(\bullet,\bullet)_{L^2(\Omega)}$. Given a Hilbert space $X$, 
let $L^2(\Omega;X)$ resp.\ $H^k(\Omega;X)$ denote the space of functions with 
values in $X$ whose components are in $L^2(\Omega)$ resp.\ $H^k(\Omega)$. 
The space of infinitely differentiable 
functions reads $C^\infty(\Omega)$ and the subspace of functions 
with compact support in $\Omega$ is denoted with $C^\infty_c(\Omega)$.
The piecewise action of differential operators
is denoted with a subscript $\NC$.
The formula $A\lesssim B$ represents an inequality $A\leq CB$ 
for some mesh-size independent, positive generic constant
$C$; $A \approx B$ abbreviates $A \lesssim B \lesssim A$.
By convention, all generic constants $C\approx 1$ do neither
depend on the mesh-size nor on the level of a triangulation
but may depend on the fixed coarse 
triangulation $\tri_0$ and its interior angles.

\section{Notation}\label{s:HOPnotation}

This section introduces notation related to 
higher-order tensors and tensor-valued functions and triangulations.

Define the set of $\ell$-tensors over $\R^2$ by 
\begin{equation*}
  \mathbb{X}(\ell):= \begin{cases}
                       \R & \text{ for }\ell=0,\\
                       \prod_{j=1}^\ell \R^2
                            =\R^2\times\dots\times\R^2\cong\R^{2^\ell}
                                      & \text{ for }\ell\geq 1
                     \end{cases}
\end{equation*}
and let $\mathfrak{S}_\ell:=\{\sigma:\{1,\dots,\ell\}\to\{1,\dots,\ell\}\mid 
\sigma\text{ is bijective}\}$ denote the symmetric group, 
i.e., the set of all permutations of $(1,\dots,\ell)$. 
Define the set of symmetric tensors 
$\Sym(\ell)\subseteq \mathbb{X}(\ell)$ by
\begin{align*}
\Sym(\ell) :=\{A\in\mathbb{X}(\ell)\mid 
      \forall (j_1,\dots,j_\ell)\in\{1,2\}^\ell\,
      \forall\sigma\in \mathfrak{S}_\ell:\;
       A_{j_1,\dots,j_\ell} = A_{j_{\sigma(1)},\dots,j_{\sigma(\ell)}}\}. 
\end{align*}
The symmetric part $\sym A\in \Sym(\ell)$ of a tensor $A\in\mathbb{X}(\ell)$
is defined by 
\begin{align*}
 (\sym A)_{j_1,\dots,j_\ell}
   :=(\mathrm{card}(\mathfrak{S}_\ell))^{-1} 
     \sum_{\sigma\in \mathfrak{S}_\ell} A_{j_{\sigma(1)},\dots,j_{\sigma(\ell)}} 
\end{align*}
for all $(j_1,\dots,j_{\ell})\in\{1,2\}^{\ell}$, where $\mathrm{card}(\mathcal{M})$ 
denotes the number of elements in a set $\mathcal{M}$.
For $\ell=2$, the set $\Sym(2)$ coincides with the set of symmetric $2\times 2$ 
matrices, while for $\ell=3$, the tensors $A\in\Sym(3)$ consist of the four 
different components $A_{111}$, $A_{112}=A_{121}=A_{211}$, 
$A_{122}=A_{212}=A_{221}$, and $A_{222}$.
Given $\ell$-tensors $A,B\in \mathbb{X}(\ell)$ and a vector $q\in\R^2$, define the 
scalar product $A:B\in\R$ and the dot product $A\cdot q\in \mathbb{X}(\ell-1)$ by
\begin{align*}
 A:B&:=\sum_{(j_1,\dots,j_\ell)\in\{1,2\}^\ell} A_{j_1,\dots,j_\ell} B_{j_1,\dots,j_\ell},\\
 (A\cdot q)_{j_1,\dots,j_{\ell-1}} 
   &:= A_{j_1,\dots,j_{\ell-1},1}\; q_1
     + A_{j_1,\dots,j_{\ell-1},2}\; q_2
\end{align*}
for all $(j_1,\dots,j_{\ell-1})\in\{1,2\}^{\ell-1}$.
The following definition summarizes some differential operators.
Recall that, for a Hilbert space $X$, the space $H^1(\Omega;X)$ (resp.\
$L^2(\Omega;X)$) denotes the space of $H^1$ (resp.\ $L^2$) functions with 
components in $X$.

\begin{definition}[differential operators]\label{d:HOPdiff}
Let $v\in H^\ell_0(\Omega)$ and $\sigma\in H^1(\Omega;\mathbb{X}(\ell))$ and define 
$\mathfrak{p}:\{1,2\}\to\{1,2\} $ by $\mathfrak{p}(1)=2$ and 
$\mathfrak{p}(2)=1$. Define the 
$\ell$th derivative $D^\ell v\in L^2(\Omega;\mathbb{X}(\ell))$ of $v$, the derivative 
$D \sigma\in L^2(\Omega;\mathbb{X}(\ell+1))$, the divergence 
$\ddiv \sigma\in L^2(\Omega;\mathbb{X}(\ell-1))$, the Curl,
$\Curl \sigma\in L^2(\Omega;\mathbb{X}(\ell+1))$, and the curl, 
$\curl \sigma\in L^2(\Omega;\mathbb{X}(\ell-1))$ by
\begin{align*}
(D^\ell v)_{j_1,\dots,j_\ell}
   &:=\partial^{\ell} v/(\partial x_{j_1}\dots\partial x_{j_\ell}) ,\\
 (D \sigma)_{j_1,\dots,j_{\ell+1}}
   &:= \partial \sigma_{j_1,\dots,j_{\ell}}/\partial x_{j_{\ell+1}},\\
    (\Curl \sigma)_{j_1,\dots,j_{\ell+1}}
   &:= (-1)^{j_{\ell+1}}
         \partial \sigma_{j_1,\dots,j_{\ell}}/ \partial x_{\mathfrak{p}(j_{\ell+1})},
                   \\
 (\ddiv \sigma)_{j_1,\dots,j_{\ell-1}} 
  &:= \partial \sigma_{j_1,\dots,j_{\ell-1},1}/ \partial x_1 
     +\partial \sigma_{j_1,\dots,j_{\ell-1},2}/\partial x_2, \\
  (\curl \sigma)_{j_1,\dots,j_{\ell-1}} 
    &:= -\partial \sigma_{j_1,\dots,j_{\ell-1},1} /\partial x_2 
    +  \partial \sigma_{j_1,\dots,j_{\ell-1},2}/\partial x_1  
\end{align*}
for $(j_1,\dots,j_{\ell+1})\in\{1,2\}^{\ell+1}$ .
\end{definition}

For $\ell=2$, these definitions coincide with the row-wise application 
of $D$, $\ddiv$, $\Curl$, and $\curl$.
The $L^2$ scalar product $(\bullet,\bullet)_{L^2(\Omega)}$ of tensor-valued 
functions $f,g:\Omega\to\mathbb{X}(\ell)$ is defined by 
$(f,g)_{L^2(\Omega)}:=\int_\Omega f:g\,dx$.
Given $\psi\in L^2(\Omega;\mathbb{X}(\ell))$ such that there exists 
$g\in L^2(\Omega)$ with 
\begin{align*}
  (\psi,D^\ell v)_{L^2(\Omega)} = (-1)^\ell \,(g, v)_{L^2(\Omega)}
  \qquad\text{for all }v\in H^\ell_0(\Omega),
\end{align*}
define the $\ell$th order divergence $\ddiv^\ell\psi:=g$ of $\psi$.
The space $H(\ddiv^\ell,\Omega)\subseteq L^2(\Omega;\mathbb{X}(\ell))$ 
is defined by
\begin{align*}
 H(\ddiv^\ell,\Omega):=\{\psi\in L^2(\Omega;\mathbb{X}(\ell))\mid 
     \ddiv^\ell\psi\in L^2(\Omega)\}.
\end{align*}
Define furthermore for $k\geq\ell$
\begin{align*}
  H(\ddiv^\ell,\Omega;\mathbb{X}(k))
   :=\left\{\psi\in L^2(\Omega;\mathbb{X}(k)) \left\vert
       \begin{array}{l}
       \forall (j_1,\dots,j_{k-\ell})\in\{1,2\}^{k-\ell}:\\
      \psi_{j_1,\dots,j_{k-\ell},\bullet}\in H(\ddiv^\ell,\Omega)  
       \end{array}
       \right\}\right..
\end{align*}

\begin{remark}
Note that the existence of the $\ell$th weak divergence does not imply the 
existence of any $k$-th divergence for $1\leq k\leq \ell$, e.g.,   
$H(\ddiv,\Omega;\mathbb{X}(\ell))\not\subseteq H(\ddiv^\ell,\Omega)$
for $\ell>1$.
\end{remark}

A shape-regular triangulation $\tri$ of a bounded, polygonal, open
Lipschitz domain $\Omega\subseteq \mathbb{R}^2$ is a set of closed triangles $T\in \tri$
such that $\overline{\Omega}=\bigcup\tri$ and any two distinct 
triangles are either disjoint or share exactly 
one common edge or one vertex.
Let $\edges(T)$ denote the edges of a triangle $T$
and $\edges:=\edges(\tri):=\bigcup_{T\in\tri} \edges(T)$ the set of edges 
in $\tri$. 
Any edge $E\in\edges$ is associated with a fixed orientation of the unit normal 
$\nu_E$ on $E$ (and $\tau_E=(0,-1;1,0)\nu_E$ denotes the unit tangent on $E$). 
On the boundary, $\nu_E$ is the outer unit normal of $\Omega$,
while for interior edges $E\not\subseteq\partial\Omega$, the orientation is 
fixed through the choice of the triangles $T_+\in\tri$ and $T_-\in\tri$ with 
$E=T_+\cap T_-$ and $\nu_E:=\nu_{T_+}\vert_E$ is the outer normal of $T_+$ on $E$.
In this situation, $[v]_E:=v\vert_{T_+}-v\vert_{T_-}$ denotes the jump across 
$E$. For an edge $E\subseteq\partial\Omega$ on the boundary, the jump across 
$E$ reads $[v]_E:=v$.
For $T\in\tri$ and $X\subseteq \mathbb{X}(\ell)$, let 
\begin{align*}
P_k(T;X) &:= \left\{v:T\rightarrow X \left\vert 
  %   \begin{array}{l}
          \text{ each component}\text{ of }v
    \text{ is a polynomial of total degree}\leq k
  %   \end{array}
   \right\}\right.;\\
P_k(\tri;X) &:= \{v:\Omega\rightarrow X\;| \;
\forall T\in\tri:\;v|_T \in P_k(T;X)\}
\end{align*}
denote the set of piecewise polynomials and $P_k(\tri):=P_k(\tri;\R)$. 
Given a subspace $X\subseteq L^2(\Omega;\mathbb{X}(\ell))$, 
let $\Pi_{X}:L^2(\Omega;\mathbb{X}(\ell))\to X$
denote the $L^2$ projection onto $X$ and let $\Pi_k$ abbreviate 
$\Pi_{P_k(\tri;\mathbb{X}(\ell))}$.
Given a triangle $T\in\tri$, let $h_T:=(\mathrm{meas}_2(T))^{1/2}$ 
denote the square root of the area of 
$T$ and 
let $h_\tri\in P_0(\tri)$ denote the piecewise constant mesh-size 
with $h_\tri\vert_T:=h_T$ for all $T\in\tri$. 
For a set of triangles $\mathcal{M}\subseteq\tri$,
let $\|\bullet\|_{\mathcal{M}}$ abbreviate 
\begin{align*}
 \|\bullet\|_{\mathcal{M}}:=\sqrt{\sum_{T\in\mathcal{M}}
         \|\bullet\|_{L^2(T)}^2}.
\end{align*}

\section{Results for tensor-valued functions}\label{s:HOPprelimresults}

The main result of this section is Theorem~\ref{t:HOPcurllesssymcurl2}, which 
proves that $\left\|\sym\Curl\bullet\right\|_{L^2(\Omega)}$ defines a norm on 
the space $Y$ defined in~\eqref{e:HOPdefY} below and can, thus, be viewed as a 
generalized Korn inequality.
The following theorem is used in the proof of Theorem~\ref{t:HOPcurllesssymcurl2}.
Recall the definition of the Curl and the symmetric part of a tensor 
from Section~\ref{s:HOPnotation}.

\begin{theorem}\label{t:HOPcurllesssymcurl1}
Any $\gamma\in H^1(\Omega;\Sym(m-1))$ satisfies 
\begin{align*}
 \left\|\Curl\gamma\right\|_{L^2(\Omega)}
  \lesssim\left\|\sym\Curl\gamma\right\|_{L^2(\Omega)}
   + \left\|\gamma\right\|_{L^2(\Omega)}.
\end{align*}
\end{theorem}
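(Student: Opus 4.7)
The plan is to apply Ne\v{c}as' inequality (Lions' lemma) componentwise. For any distribution $f$ on the bounded Lipschitz domain $\Omega$,
\[
\|f\|_{L^2(\Omega)} \lesssim \|f\|_{H^{-1}(\Omega)} + \|\nabla f\|_{H^{-1}(\Omega)}.
\]
With $f=\Curl\gamma$, the first term on the right is $\lesssim\|\gamma\|_{L^2(\Omega)}$ because $\Curl$ is a first-order differential operator, which reduces the proof to bounding $\|\partial_k\Curl\gamma\|_{H^{-1}(\Omega)}\lesssim\|\sym\Curl\gamma\|_{L^2(\Omega)}+\|\gamma\|_{L^2(\Omega)}$ for $k\in\{1,2\}$.

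Each component of $\partial_k\Curl\gamma$ is a second partial derivative $\partial_k\partial_j\gamma_\alpha$. I would express these in terms of distributional derivatives of components of $\sym\Curl\gamma$ via the algebraic identity
\[
m\,(\sym\Curl\gamma)_{j_1,\dots,j_m}=\sum_{r=1}^m(-1)^{j_r}\partial_{\mathfrak{p}(j_r)}\gamma_{j_1,\dots,\hat{j}_r,\dots,j_m},
\]
which follows by grouping the $m!$ permutations of $(j_1,\dots,j_m)$ by the image of the last index and exploiting the symmetry of $\gamma$. Taking $\partial_k$ of both sides and varying $(j_1,\dots,j_m)\in\{1,2\}^m$ yields a linear system for the second partials of the components of $\gamma$ in terms of first derivatives of components of $\sym\Curl\gamma$. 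For $m\leq 2$ this system determines every second partial directly, which is the classical Korn argument. For $m\geq 3$, a dimension count ($2(m+1)$ equations versus $3m$ unknowns) shows a deficit of $m-2$ mixed partials of the middle components of $\gamma$; I would handle these by a second Ne\v{c}as iteration at the $H^{-1}\lesssim H^{-2}+\nabla H^{-2}$ level, noting that the first-order derivatives of a missing mixed partial coincide, via $\partial_i\partial_j=\partial_j\partial_i$, with second-order derivatives of already-recovered quantities and are therefore bounded in $H^{-2}$ by $\|\sym\Curl\gamma\|_{L^2(\Omega)}$.

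The main obstacle is the combinatorial bookkeeping needed to make the iteration explicit uniformly in $m$ while keeping the generic constants under control. A conceptually cleaner alternative would be to verify that the principal symbol of $\sym\Curl$ on $\Sym(m-1)$-valued functions is injective at every nonzero frequency: in the polynomial identification $\gamma\leftrightarrow q_\gamma(x)=\sum_\alpha\gamma_\alpha\,x_{\alpha_1}\cdots x_{\alpha_{m-1}}$, the operator $\sym\Curl$ corresponds to multiplication of $q_\gamma$ by the linear form $\xi^\perp\cdot x$, which is manifestly an injective multiplier on homogeneous polynomials whenever $\xi\neq 0$. One could then invoke the standard a priori estimate $\|u\|_{H^1(\Omega)}\lesssim\|Tu\|_{L^2(\Omega)}+\|u\|_{L^2(\Omega)}$ for elliptic first-order constant-coefficient operators $T$ on bounded Lipschitz domains, which implies the claim in view of $\|\Curl\gamma\|_{L^2}\leq\|\nabla\gamma\|_{L^2}$.
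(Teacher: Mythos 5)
Your ``conceptually cleaner alternative'' is, in substance, exactly the paper's proof: the paper writes out the $(m+1)\times m$ symbol matrix of $\sym\Curl$ acting on the $m$ distinct components of $\gamma\in H^1(\Omega;\Sym(m-1))$, observes that its columns are linearly independent for $\xi\neq 0$, and then invokes Ne\v{c}as (Chap.~3, Thms.~7.6 and 7.8), which is precisely the a~priori estimate $\|u\|_{H^1(\Omega)}\lesssim\|Tu\|_{L^2(\Omega)}+\|u\|_{L^2(\Omega)}$ for first-order constant-coefficient systems with injective symbol that you quote. Your verification of the symbol condition via the identification of $\Sym(m-1)$ with homogeneous polynomials of degree $m-1$, under which the symbol of $\sym\Curl$ becomes multiplication by the linear form $\xi^\perp\cdot x$, is a clean equivalent of the paper's explicit matrix computation; if you take this route (and cite Ne\v{c}as or an equivalent source for the a~priori estimate rather than calling it standard), the argument is complete and coincides with the paper's.

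Your primary plan, however, has a genuine gap for $m\geq 4$. The algebraic identity and the count showing that the first-derivative relations leave $m-2$ second partials of $\gamma$ undetermined are correct, but the proposed repair fails: it is not true that the gradients of the missing mixed partials can be written as second derivatives of already-recovered quantities (or of components of $\sym\Curl\gamma$) and hence be bounded in $H^{-2}$ by $\left\|\sym\Curl\gamma\right\|_{L^2(\Omega)}$. Take the kernel field $\gamma$ whose value at $x$ is the symmetric $(m-1)$-tensor with associated polynomial $t\mapsto (x\cdot t)^{m-1}$; then $\sym\Curl\gamma=0$, so every recovered combination of second partials vanishes identically, yet the missing mixed partials are polynomials of degree $m-3\geq 1$ whose gradients do not vanish. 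Hence no identity of the asserted form can exist once $m\geq 4$ (for $m=3$ it does work, which is why your check is consistent there), and the iteration would have to descend through roughly $m$ negative-order levels, with the recoverable combinations at each level determined by increasingly involved bookkeeping --- essentially reproving ellipticity by hand. In addition, the negative-order Ne\v{c}as inequality $\|f\|_{H^{-1}(\Omega)}\lesssim\|f\|_{H^{-2}(\Omega)}+\|\nabla f\|_{H^{-2}(\Omega)}$ that the iteration relies on is itself a nontrivial statement on Lipschitz domains and would need a reference. So the first route, as written, does not yield the theorem for general $m$; rely on your alternative.
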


\begin{proof}
The proof is subdivided in three steps.

\step{1}
Let $0\leq k\leq m$ and $\mathbf{j}(k)=(j_1,\dots,j_m)\in \{1,2\}^m$ 
with $j_\ell=1$ for all $\ell\in\{1,\dots,k\}$ and $j_\ell=2$ for all
$\ell\in\{k+1,\dots,m\}$, i.e.,
\begin{align*}
 \mathbf{j}(k)=(\underbrace{1,\dots,1}_k,\underbrace{2,\dots,2}_{m-k}).
\end{align*}
The combination of the definitions of 
$\sym$ and $\Curl$ reads 
\begin{align}\label{e:HOPproofcurllesssymcurl1}
 (\sym\Curl\gamma)_{\mathbf{j}(k)} = \mathrm{card}(\mathfrak{S}_{m})^{-1} 
      \sum_{\sigma\in\mathfrak{S}_{m}} (-1)^{j_{\sigma(m)}}
         \frac{\partial}{\partial x_{\mathfrak{p}(j_{\sigma(m)})}} 
            \gamma_{j_{\sigma(1)},\dots,j_{\sigma(m-1)}}.
\end{align}
Let $\overline{\mathbf{j}}(k):=(j_1,\dots,j_{m-1})\in\{1,2\}^{m-1}$ 
be the multi-index with the same number 
of ones and the number of twos reduced by one and 
$\underline{\mathbf{j}}(k):= (j_2,\dots,j_{m})\in\{1,2\}^{m-1}$ 
the multi-index with the same number 
of twos and the number of ones reduced by one, i.e.,
\begin{align*}
 \overline{\mathbf{j}}(k)=(\underbrace{1,\dots,1}_k,\underbrace{2,\dots,2}_{m-k-1})
 \qquad\text{and}\qquad
 \underline{\mathbf{j}}(k)=(\underbrace{1,\dots,1}_{k-1},\underbrace{2,\dots,2}_{m-k}).
\end{align*}
The symmetry of $\gamma$ implies that 
$\gamma_{j_{\sigma(1)},\dots,j_{\sigma(m-1)}}=\gamma_{\underline{\mathbf{j}}(k)}$
if $j_{\sigma(m)}=1$ and 
$\gamma_{j_{\sigma(1)},\dots,j_{\sigma(m-1)}}=\gamma_{\overline{\mathbf{j}}(k)}$
if $j_{\sigma(m)}=2$.
Since the number of permutations $\sigma\in\mathfrak{S}_m$ such that 
$j_{\sigma(m)}=1$ is 
$k \,\mathrm{card}(\mathfrak{S}_{m-1})$ and 
the number of permutations $\sigma\in\mathfrak{S}_m$ such that $j_{\sigma(m)}=2$ is 
$(m-k)\,\mathrm{card}(\mathfrak{S}_{m-1})$
and since $\mathrm{card}(\mathfrak{S}_{m})=m!$ and 
$\mathrm{card}(\mathfrak{S}_{m-1})=(m-1)!$, 
this implies that 
\eqref{e:HOPproofcurllesssymcurl1} equals 
\begin{equation}\label{e:HOPcurllesssymcurlProof2}
\begin{aligned}
& (\sym\Curl\gamma)_{\mathbf{j}(k)} \\
   &\quad 
 =  \frac{\mathrm{card}(\mathfrak{S}_{m-1})}{\mathrm{card}(\mathfrak{S}_{m})} 
 \left( (m-k)\,
           \frac{\partial \gamma_{\overline{\mathbf{j}}(k)}}{\partial x}
         - k\,
             \frac{\partial \gamma_{\underline{\mathbf{j}}(k)}}{\partial y}
\right)
 %& \qquad\qquad  
 = \frac{m-k}{m}\, \frac{\partial \gamma_{\overline{\mathbf{j}}(k)}}{\partial x}
  - \frac{k}{m}\,  \frac{\partial \gamma_{\underline{\mathbf{j}}(k)}}{\partial y}.
\end{aligned}
\end{equation}

\step{2}
This step applies \cite[Chap.~3, Thm.~7.6]{Necas1967} 
and \cite[Chap.~3, Thm.~7.8]{Necas1967} to operators $N_k$ defined 
below. Step~3 then proves a relation between these operators and the 
operator $\sym\Curl$.

Define for $k\in\{1,\dots,m+1\}$, $s\in\{1,\dots,m\}$, and a multi-index 
$\kappa\in\{(1,0),(0,1)\}$
\begin{align*}
 a_{k,s,\kappa}:=\begin{cases}
                   -(k-1)/m & \text{ if }s=k-1\text{ and }\kappa=(0,1),\\
                   (m-k+1)/m & \text{ if }s=k\text{ and }\kappa=(1,0),\\
                   0 & \text{ else}.
                 \end{cases}
\end{align*}
Furthermore, define for $\xi\in\R^2$ 
\begin{align*}
  \mathfrak{N}_{ks}\xi:= \sum_{\kappa=(1,0),(0,1)} a_{k,s,\kappa} \xi^\kappa
           =
             \begin{cases}
               -(k-1) \xi_2/m & \text{ if }s=k-1,\\
               (m-k+1) \xi_1/m   & \text{ if }s=k,\\
               0               & \text{else}
             \end{cases}
\end{align*}
with the multi-index notation 
$\xi^\kappa=\xi_1^{\kappa_1}\xi_2^{\kappa_2}$.
Then the matrix $(\mathfrak{N}_{ks}\xi)_{\substack{1\leq k\leq (m+1)\\
                     1\leq s\leq m}}$ reads
\begin{align*}
 \frac{1}{m}
   \begin{pmatrix}
     m\xi_1 & 0 & 0 & 0 & \dots & 0\\
     -\xi_2 & (m-1) \xi_1 & 0 & 0 & \dots & 0\\
      0 & -2 \xi_2 & (m-2) \xi_1 & 0 & \dots & 0\\
      \vdots &  &        &        & & \vdots       \\
      \vdots &  & \ddots & \ddots & & \vdots \\
      \vdots &  &        &        & & \vdots       \\
      0 & \dots & 0 & -(m-2)\xi_2 & 2\xi_1 & 0 \\
      0 & \dots &  & 0 & -(m-1) \xi_2 & \xi_1\\
      0 & \dots &  &  & 0 & -m\xi_2
   \end{pmatrix}
  \in\R^{(m+1)\times m}.
\end{align*}
If $\xi\neq 0$, the columns of this matrix are linear independent. 
Define the operators $(N_k)_{k=1,\dots,m+1}$,
$N_k:H^1(\Omega;\R^m)\to L^2(\Omega)$, by
\begin{align*}
 N_k v:= \sum_{s=1}^{m} \sum_{\kappa=(1,0),(0,1)} a_{k,s,\kappa} D^\kappa v_s.
\end{align*}
Then, the combination of \cite[Chap.~3, Thm.~7.6]{Necas1967} 
with \cite[Chap.~3, Thm.~7.8]{Necas1967}
proves 
\begin{align}\label{e:HOPcurllesssymcurlProof3}
 \sum_{s=1}^{m} \|v_s\|_{H^1(\Omega)}^2
   \lesssim \sum_{k=1}^{m+1} \|N_k v\|_{L^2(\Omega)}^2
      + \sum_{s=1}^m \|v_s\|_{L^2(\Omega)}^2.
\end{align}

\step{3}
This step proves a relation between $(N_k)_{k=1,\dots,m+1}$ and $\sym\Curl$ 
for a proper choice of $v=(v_1,\dots,v_m)$.

Define $v=(v_1,\dots,v_m)\in H^1(\Omega;\R^m)$ by setting for each $s\in\{1,\dots,m\}$
the function $v_s:=\gamma_{\ell_1,\dots,\ell_{m-1}}$ with $\ell_1=\dots=\ell_{s-1}=1$
and $\ell_s=\dots=\ell_{m-1}=2$
(with $(\ell_1,\dots,\ell_{m-1})=(2,\dots,2)$ for $s=1$ and 
$(\ell_1,\dots,\ell_{m-1})=(1,\dots,1)$ for $s=m$).
The symmetry of $\gamma$ proves 
\begin{equation}\label{e:HOPcurllesssymcurlProof4}
\begin{aligned}
  \left\|\Curl\gamma\right\|_{L^2(\Omega)}^2
    &\lesssim \sum_{s=1}^{m} \|v_s\|_{H^1(\Omega)}^2
 \qquad\text{and}\qquad
  \sum_{s=1}^m \|v_s\|_{L^2(\Omega)}^2 
    &\approx\|\gamma\|_{L^2(\Omega)}^2.
\end{aligned}
\end{equation}
With the notation from Step~1 it holds that 
$v_s=\gamma_{\overline{\mathbf{j}}(s-1)}=\gamma_{\underline{\mathbf{j}}(s)}$
and the definition of $N_{k}$ from Step~2 
and \eqref{e:HOPcurllesssymcurlProof2} reveal
\begin{align*}
   N_{k+1} v = 
   (m-k)/m\,(\partial v_{k+1}/\partial x)
   - k/m\,(\partial v_{k}/\partial y)
  =(\sym\Curl\gamma)_{\mathbf{j}(k)} .
\end{align*}
This leads to  
\begin{align*}
   \sum_{k=1}^{m+1} \|N_k v\|_{L^2(\Omega)}^2
   \leq \left\|\sym\Curl\gamma\right\|_{L^2(\Omega)}^2.
\end{align*}
This, \eqref{e:HOPcurllesssymcurlProof4}, and an application 
of \eqref{e:HOPcurllesssymcurlProof3} implies the 
assertion.
\end{proof}

Define, for $m\geq 1$, the spaces 
\begin{equation}\label{e:HOPdefY}
\begin{aligned}
 \mathfrak{H}(\Omega,m-1)
  &:=\left\{v\in H^1(\Omega;\Sym(m-1))\left\vert \;
       \textstyle{\int_\Omega v\,dx=0}\right\}\right.,\\
 Z&:=\left\{\beta\in \mathfrak{H}(\Omega,m-1) \left\vert\;
       \sym\Curl\beta = 0\right\}\right.,\\
 Y&:=\left\{\gamma\in \mathfrak{H}(\Omega,m-1) \left\vert \;
        \forall \beta\in Z:\; (\Curl\beta,\Curl\gamma)_{L^2(\Omega)}=0
         \right\}\right..
\end{aligned}
\end{equation}
A computation reveals for $m=2$, that the spaces $Z$ and $Y$ read
\begin{equation}
\begin{aligned}\label{e:HOPZm2}
  Z&=\{\gamma\in \mathfrak{H}(1)\mid \exists c_1\in\R, c_2\in\R^2
     \text{ with }\gamma(x) = c_1 x + c_2\},\\
   Y&=\left\{\gamma\in H^1(\Omega;\R^2)\left\vert 
    \textstyle{\int_\Omega \gamma\,dx=0\text{ and }
     \int_\Omega\ddiv\gamma\,dx=0}\right\}\right.
\end{aligned}
\end{equation}
and for $m=3$ the space $Z$ reads
\begin{align}\label{e:Zform=3}
   Z=\left\{\gamma\in \mathfrak{H}(2) 
       \left\vert 
        \begin{array}{l}
           \exists c_1,c_2,c_3\in\R, c_4\in\R^{2\times 2}
         \text{ with }\\
          \gamma(x,y) = \begin{pmatrix}
                                      c_1 x^2+2c_2 x & c_1 xy + c_2 y + c_3 x\\
                                      c_1 xy+c_2 y+c_3 x & c_1 y^2 + 2c_3 y 
                                    \end{pmatrix}
            + c_4
        \end{array}
\right\}\right..
\end{align}

The following theorem generalizes \cite[Lemma~3.3]{CarstensenGallistlHu2014} from 
$m=2$ to higher-order tensors $m>2$ and states 
that $\left\|\sym\Curl\bullet\right\|_{L^2(\Omega)}$
defines a norm on $Y$.
Note that $\left\|\Curl\bullet\right\|_{L^2(\Omega)}
=\left\| D\bullet\right\|_{L^2(\Omega)}$.

\begin{theorem}\label{t:HOPcurllesssymcurl2}
Any $\gamma\in Y$ satisfies 
\begin{align*}
 \left\|\Curl\gamma\right\|_{L^2(\Omega)} 
   \lesssim \left\|\sym\Curl\gamma\right\|_{L^2(\Omega)}.
\end{align*}
\end{theorem}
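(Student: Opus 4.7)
The plan is to argue by contradiction, combining Theorem~\ref{t:HOPcurllesssymcurl1} with a compactness argument and the orthogonality that defines $Y$. Suppose the asserted inequality fails. Then there exists a sequence $(\gamma_n)\subseteq Y$ with $\|\Curl\gamma_n\|_{L^2(\Omega)}=1$ and $\|\sym\Curl\gamma_n\|_{L^2(\Omega)}\to 0$. The first goal is to show that $(\gamma_n)$ is uniformly bounded in $H^1(\Omega;\Sym(m-1))$: since each $\gamma_n$ has vanishing integral mean (being in $\mathfrak{H}(\Omega,m-1)$), a Poincar\'e inequality together with $\|D\gamma_n\|_{L^2(\Omega)}=\|\Curl\gamma_n\|_{L^2(\Omega)}=1$ bounds $\|\gamma_n\|_{L^2(\Omega)}$, hence $\|\gamma_n\|_{H^1(\Omega)}$, uniformly.

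Next, the Rellich--Kondrachov theorem yields a subsequence (not relabelled) and a limit $\gamma\in H^1(\Omega;\Sym(m-1))$ such that $\gamma_n\rightharpoonup \gamma$ weakly in $H^1$ and $\gamma_n\to\gamma$ strongly in $L^2$. Passing to the limit in $\int_\Omega\gamma_n\,dx=0$ gives $\gamma\in\mathfrak{H}(\Omega,m-1)$, and since $\sym\Curl\gamma_n\to 0$ strongly in $L^2$ while $\sym\Curl\gamma_n\rightharpoonup \sym\Curl\gamma$ weakly (the operator being continuous), one obtains $\sym\Curl\gamma=0$, that is, $\gamma\in Z$.

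The step that upgrades the convergence is the application of Theorem~\ref{t:HOPcurllesssymcurl1} to the difference $\gamma_n-\gamma\in H^1(\Omega;\Sym(m-1))$:
\begin{align*}
\|\Curl(\gamma_n-\gamma)\|_{L^2(\Omega)}\lesssim \|\sym\Curl(\gamma_n-\gamma)\|_{L^2(\Omega)}+\|\gamma_n-\gamma\|_{L^2(\Omega)}.
\end{align*}
Both terms on the right-hand side vanish as $n\to\infty$, so $\Curl\gamma_n\to\Curl\gamma$ strongly in $L^2$. In particular, $\|\Curl\gamma\|_{L^2(\Omega)}=1$, and for every $\beta\in Z$ the identity $(\Curl\beta,\Curl\gamma)_{L^2(\Omega)}=\lim_{n\to\infty}(\Curl\beta,\Curl\gamma_n)_{L^2(\Omega)}=0$ shows that $\gamma\in Y$. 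Thus $\gamma\in Y\cap Z$, and choosing $\beta=\gamma$ in the defining orthogonality of $Y$ forces $\Curl\gamma=0$, contradicting $\|\Curl\gamma\|_{L^2(\Omega)}=1$.

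The main obstacle is the compactness/upgrade step: ensuring that Theorem~\ref{t:HOPcurllesssymcurl1}, a priori only a Korn-type inequality with an $L^2$ defect, can be leveraged via the orthogonality encoded in the definition of $Y$ to remove that defect. The key insight is that the subspace $Z$ precisely captures the kernel of $\sym\Curl$ on $\mathfrak{H}(\Omega,m-1)$, so any limit $\gamma$ produced by the contradiction argument automatically sits in both $Y$ and $Z$, and the $Y$-orthogonality then annihilates it.
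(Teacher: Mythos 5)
Your proposal is correct and follows essentially the same route as the paper: a contradiction argument using Poincar\'e and compactness to extract an $L^2$-convergent subsequence, Theorem~\ref{t:HOPcurllesssymcurl1} to upgrade to strong convergence of the Curls, and the conclusion that the limit lies in $Z\cap Y$, which forces $\Curl\gamma=0$ and contradicts the normalization. The only cosmetic difference is that you apply Theorem~\ref{t:HOPcurllesssymcurl1} to $\gamma_n-\gamma$ after identifying the weak $H^1$ limit, whereas the paper applies it to differences $\gamma_n-\gamma_\ell$ and invokes completeness; both are valid.
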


\begin{proof}
Assume for contradiction that the statement does not hold. 
Then there exists a sequence $(\gamma_n)_{n\in\mathbb{N}}\in Y^{\mathbb{N}}$ 
with 
\begin{align*}
 n \left\|\sym\Curl\gamma_n\right\|_{L^2(\Omega)}
  \leq  \left\|\Curl\gamma_n\right\|_{L^2(\Omega)} = 1.
\end{align*}
Since $Y\subseteq \mathfrak{H}(m-1)$, Poincar\'e's 
inequality implies that all components of
$\gamma_n$ are bounded in $H^1(\Omega)$. Since 
$H^1(\Omega;\mathbb{X}(m-1))$ is reflexive and compactly embedded in 
$L^2(\Omega;\mathbb{X}(m-1))$, 
there 
exists a subsequence (not relabelled) with a limit
$\gamma\in L^2(\Omega;\mathbb{X}(m-1))$, $\gamma_n\to \gamma$ in 
$L^2(\Omega;\mathbb{X}(m-1))$. This and Theorem~\ref{t:HOPcurllesssymcurl1} 
imply
\begin{align*}
 \left\|\Curl(\gamma_n-\gamma_\ell)\right\|_{L^2(\Omega)}
   &\lesssim \left\|\sym\Curl(\gamma_n-\gamma_\ell)\right\|_{L^2(\Omega)}
              + \|\gamma_n-\gamma_\ell\|_{L^2(\Omega)}\\
   &\leq \frac{1}{n} + \frac{1}{\ell} + \|\gamma_n-\gamma_\ell\|_{L^2(\Omega)}
    \to 0 \qquad\text{as }n,\ell\to\infty.
\end{align*}
The Poincar\'e inequality and the completeness of $H^1(\Omega;\mathbb{X}(m-1))$
imply the existence of $\widetilde{\gamma}\in H^1(\Omega;\mathbb{X}(m-1))$
with $\gamma_n\to\widetilde{\gamma}$ in $H^1(\Omega;\mathbb{X}(m-1))$ and 
thus $\gamma=\widetilde{\gamma}$.
It holds that $\left\|\sym\Curl\bullet\right\|_{L^2(\Omega)}
\leq\left\|\Curl\bullet\right\|_{L^2(\Omega)}$ and, therefore,
$\left\|\sym\Curl\bullet\right\|_{L^2(\Omega)}$ defines a bounded
functional on $H^1(\Omega;\mathbb{X}(m-1))$.
Hence, 
\begin{align}\label{e:HOPcurllesssymcurlProof1}
 \left\|\sym\Curl\gamma\right\|_{L^2(\Omega)} 
   = \lim_{n\to\infty}  \left\|\sym\Curl\gamma_n\right\|_{L^2(\Omega)} =0.
\end{align}
Let $\beta\in Z$. Since $\gamma_n\in Y$, the Cauchy inequality reveals 
\begin{align*}
 (\Curl\beta,\Curl\gamma)_{L^2(\Omega)}
  &= (\Curl\beta,\Curl(\gamma-\gamma_n))_{L^2(\Omega)}\\
  &\leq \left\|\Curl\beta\right\|_{L^2(\Omega)} \;
       \left\|\Curl(\gamma-\gamma_n)\right\|_{L^2(\Omega)}
  \to 0 \qquad\text{as }n\to\infty.
\end{align*}
This and \eqref{e:HOPcurllesssymcurlProof1} lead to $\gamma\in Z\cap Y$ and 
therefore $\gamma=0$. This contradicts 
$\left\|\Curl\gamma\right\|_{L^2(\Omega)}
=\lim_{n\to\infty}\left\|\Curl\gamma_n\right\|_{L^2(\Omega)} = 1$ and, hence, implies the 
assertion.
\end{proof}

\begin{remark}[dependency on the domain]\label{r:HOPdepdomain}
The proof by contradiction from Theorem~\ref{t:HOPcurllesssymcurl2} 
does not provide information about the dependency on the domain. 
A scaling argument reveals that it does not depend on the size 
of the domain, but it may depend on its shape. 
\end{remark}

\section{Helmholtz decomposition for higher orders}
\label{s:HOPhelmholtzdecomposition}

This section proves a Helm\-holtz decomposition 
of $L^2$ tensors into $m$th derivatives and the symmetric part of a Curl
in Theorem~\ref{t:HOPhelmholtz}. This is 
a generalization of the Helmholtz decomposition of
\cite{BeiraodaVeigaNiiranenStenberg2007} for fourth-order problems ($m=2$).
The proof is based on Theorem~\ref{t:HOPhelmholtzCinfty} below, which 
characterizes 
$m$th-divergence-free smooth functions as symmetric parts of Curls.

\begin{theorem}\label{t:HOPhelmholtzCinfty}
Let $m\geq 1$ and $\tau\in C^\infty(\Omega;\Sym(m))$ with $\ddiv^m\tau=0$.
Then there exists $\gamma\in C^\infty(\Omega;\mathbb{X}(m-1))$ with 
\begin{align*}
 \tau = \sym\Curl\gamma.
\end{align*}
\end{theorem}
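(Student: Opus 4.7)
The plan is to use induction on $m$. The base case $m=1$ is the classical Poincaré lemma on the simply connected domain $\Omega$: any smooth divergence-free vector field equals $\Curl\chi$ for some scalar $\chi$, which coincides with $\sym\Curl\chi$ since symmetrization is trivial on $1$-tensors. This sets up the induction; the inductive step is more delicate.

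For the inductive step I would first record three preliminary facts, each following by a short direct calculation from the componentwise formula for $\sym\Curl$ derived in Step~1 of the proof of Theorem~\ref{t:HOPcurllesssymcurl1}: (a) $\sym\Curl\gamma$ depends only on $\sym\gamma$, so I may assume $\gamma\in C^\infty(\Omega;\Sym(m-1))$; (b) the commutation identity
\[
  \ddiv(\sym\Curl\gamma)=\frac{m-1}{m}\,\sym\Curl(\ddiv\gamma)\qquad\text{for }\gamma\in C^\infty(\Omega;\Sym(m-1));
\]
(c) $\ddiv:C^\infty(\Omega;\Sym(k))\to C^\infty(\Omega;\Sym(k-1))$ is surjective for every $k\geq 1$, by setting one symmetric component to zero and solving the recursion $\partial\psi^{(\ell+1)}/\partial x=\eta^{(\ell)}-\partial\psi^{(\ell)}/\partial y$ recursively via integration in $x$.

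Given $\tau\in C^\infty(\Omega;\Sym(m))$ with $\ddiv^m\tau=0$, the tensor $\ddiv\tau\in C^\infty(\Omega;\Sym(m-1))$ satisfies $\ddiv^{m-1}(\ddiv\tau)=0$, so the inductive hypothesis furnishes $\eta\in C^\infty(\Omega;\Sym(m-2))$ with $\ddiv\tau=\sym\Curl\eta$. Using (c) I pick $\psi\in C^\infty(\Omega;\Sym(m-1))$ with $\ddiv\psi=\frac{m}{m-1}\eta$; the identity (b) yields $\ddiv(\sym\Curl\psi)=\sym\Curl\eta=\ddiv\tau$, so $\sigma:=\tau-\sym\Curl\psi\in\Sym(m)$ satisfies the \emph{strong} divergence condition $\ddiv\sigma=0$.

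It then suffices to show a Saint-Venant-type claim: any $\sigma\in C^\infty(\Omega;\Sym(m))$ with $\ddiv\sigma=0$ on simply connected $\Omega$ satisfies $\sigma=\Curl^m\chi$ for some scalar $\chi$, and $\Curl^m\chi$ is already in $\Sym(m)$ (by a sign-parity count) and coincides with $\sym\Curl(\Curl^{m-1}\chi)$. I would prove this by a nested induction on $m$: the classical Poincaré lemma applied to each divergence-free fiber $\sigma_{j_1,\dots,j_{m-1},\bullet}$ produces scalars $\tilde\sigma_{j_1,\dots,j_{m-1}}$, unique up to constants on $\Omega$, with $\sigma_{j_1,\dots,j_{m-1},\bullet}=\Curl\tilde\sigma_{j_1,\dots,j_{m-1}}$. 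Averaging $\tilde\sigma$ over permutations of its $m-1$ indices preserves the Curl identity (because $\sigma$ is symmetric in those indices) and delivers $\tilde\sigma\in\Sym(m-1)$; the one remaining transposition, between the last and penultimate indices of $\sigma$, forces $\partial\tilde\sigma_{\dots,1}/\partial x_1+\partial\tilde\sigma_{\dots,2}/\partial x_2=0$, i.e.\ $\ddiv\tilde\sigma=0$. The inner inductive hypothesis gives $\tilde\sigma=\Curl^{m-1}\chi$, whence $\sigma=\Curl^m\chi$. Combining, $\tau=\sym\Curl(\psi+\Curl^{m-1}\chi)$, closing the outer induction. The main obstacle I anticipate is this nested Saint-Venant step: carefully arranging that the gauge freedom from the simply connected topology is enough to symmetrize $\tilde\sigma$ and then extracting $\ddiv\tilde\sigma=0$ from the last permutation identity; the commutation identity (b) is purely computational but also requires care with the coefficients $(m-k)/m$ and $k/m$.
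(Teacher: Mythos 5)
Your skeleton (induction on $m$, apply the hypothesis to $\ddiv\tau$, correct $\tau$ to a row-wise divergence-free field, invoke the planar Poincar\'e lemma) is the right one, and two of your three preliminary facts check out: (a) is true, and the commutation identity (b) can indeed be verified on the representative components with $k$ ones exactly as in Step~1 of the proof of Theorem~\ref{t:HOPcurllesssymcurl1}; your Saint-Venant-type nested induction (symmetrize the potential using that $\Curl$ determines it up to constants, then read $\ddiv\widetilde\sigma=0$ off the last transposition) is also sound. The genuine gap is (c). You justify the surjectivity of $\ddiv:C^\infty(\Omega;\Sym(k))\to C^\infty(\Omega;\Sym(k-1))$ by ``integration in $x$'', but the theorem is stated for an arbitrary bounded, polygonal, simply connected Lipschitz domain, whose horizontal slices need not be connected, so the recursion $\partial\psi^{(\ell+1)}/\partial x=\eta^{(\ell)}-\partial\psi^{(\ell)}/\partial y$ cannot be solved by fibrewise antiderivatives. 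Worse, this is not a technicality: $\partial/\partial x_1$ is genuinely not surjective on $C^\infty(\Omega)$ for all such $\Omega$. For instance, for $\Omega=(0,3)^2\setminus\bigl([1,2]\times(0,2]\bigr)$ (a square with a rectangular tooth attached to the bottom edge, a simply connected Lipschitz polygon), functions of the form $\psi(x)\chi_\varepsilon(y)$ supported in thin horizontal strips just above the tooth, with $\partial_x$ supported in two fixed balls around $(0.5,2)$ and $(2.5,2)$, show that $\Omega$ is not $P$-convex for supports for $P=\partial_1$, so by the Malgrange--H\"ormander theorem $\partial_1 u=g$ is unsolvable in $C^\infty(\Omega)$ for some smooth $g$. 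Hence the choice of $\psi\in C^\infty(\Omega;\Sym(m-1))$ with $\ddiv\psi=\tfrac{m}{m-1}\eta$ is unjustified as written; your argument is complete only under an extra geometric hypothesis (e.g.\ slices parallel to a coordinate axis are intervals), and repairing it in the stated generality (say via an elasticity-type elliptic system for a potential of $\psi$) would be a nontrivial piece of analysis in its own right.

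This is exactly the difficulty the paper's proof avoids: instead of solving a PDE for a corrector, it builds from $\beta$ (the potential of $\ddiv\tau$ given by the induction hypothesis) a tensor $\widehat\beta$ by a purely algebraic, derivative-free rearrangement of the components of $\beta$ such that $\ddiv\widehat\beta=\sym\Curl\beta=\ddiv\tau$ and, by a counting argument, $\sym\widehat\beta=0$. Then $\tau-\widehat\beta$ is row-wise divergence-free, the scalar Poincar\'e lemma gives $\tau-\widehat\beta=\Curl\gamma$, and taking symmetric parts eliminates $\widehat\beta$, so neither a divergence-surjectivity result nor a Saint-Venant step is needed. If you want to keep your route, you must either restrict the class of domains or replace (c) by such an algebraic construction --- at which point you have essentially rediscovered the paper's $\widehat\beta$.
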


\begin{proof}
The proof is based on mathematical induction. 
 
The base case $m=1$ is a classical result \cite{Rudin1976}.
Assume as induction hypothesis that the statement holds for $(m-1)$, i.e.,
for all $\widetilde\tau\in C^\infty(\Omega;\Sym(m-1))$ with $\ddiv^{m-1}\widetilde\tau=0$ 
there exists $\gamma\in C^\infty(\Omega;\mathbb{X}(m-2))$ with $\widetilde\tau = 
\sym\Curl\gamma$.

The inductive step is split in five steps.
Suppose that $\tau\in C^\infty(\Omega;\Sym(m))$ with $\ddiv^m\tau=0$.

\step{1}
Then $\ddiv\tau\in C^\infty(\Omega;\mathbb{X}(m-1))$ and  
$\ddiv^{m-1}\ddiv \tau=0$. Let $(j_1,\dots,j_{m-1})\in\{1,2\}^{m-1}$ and 
$\sigma\in\mathfrak{S}_{m-1}$. Recall the definition of the divergence 
from Definition~\ref{d:HOPdiff}.
The symmetry of $\tau$ implies
\begin{align*}
 &(\ddiv\tau)_{j_1,\dots,j_{m-1}} 
   = \partial \tau_{j_1,\dots,j_{m-1},1}/\partial x_1 
     + \partial \tau_{j_1,\dots,j_{m-1},2}/\partial x_2 \\
 &\qquad\qquad
  = \partial \tau_{j_{\sigma(1)},\dots,j_{\sigma(m-1)},1}/\partial x_1 
     + \partial \tau_{j_{\sigma(1)},\dots,j_{\sigma(m-1)},2}/\partial x_2
   = (\ddiv\tau)_{j_{\sigma(1)},\dots,j_{\sigma(m-1)}}.
\end{align*}
Hence, $\ddiv\tau\in C^\infty(\Omega;\Sym(m-1))$.
The induction hypothesis guarantees
the existence of $\beta\in C^\infty(\Omega;\mathbb{X}(m-2))$ with 
$\ddiv\tau=\sym\Curl\beta$.

\step{2}
This step defines some 
$\widehat{\beta}\in C^\infty(\Omega;\mathbb{X}(m))$
with $\ddiv\widehat{\beta}=\ddiv \tau$.

The definitions of $\sym$ and $\Curl$ from Section~\ref{s:HOPnotation}
for tensors combine to 
\begin{equation}\label{e:HOPproofHelmholtzCinftyStep21}
\begin{aligned}
 &(\sym\Curl\beta)_{j_1,\dots,j_{m-1}}\\
  &\qquad\qquad = (\mathrm{card}(\mathfrak{S}_{m-1}))^{-1} 
     \sum_{\sigma\in\mathfrak{S}_{m-1}} (-1)^{j_{\sigma(m-1)}}
         \frac{\partial}{\partial x_{\mathfrak{p}(j_{\sigma(m-1)})}} 
            \beta_{j_{\sigma(1)},\dots,j_{\sigma(m-2)}}.
\end{aligned}
\end{equation}
Define $\widehat{\beta}\in C^\infty(\Omega;\mathbb{X}(m))$
by 
\begin{align}\label{e:HOPproofHelmholtzCinftyStep22}
 \widehat{\beta}_{j_1,\dots,j_m}
  := (-1)^{\mathfrak{p}(j_{m})} (\mathrm{card}(\mathfrak{S}_{m-1}))^{-1} 
                \sum_{\substack{\sigma\in\mathfrak{S}_{m-1}\\
                                  j_{\sigma(m-1)}=\mathfrak{p}(j_m)}}
        \beta_{j_{\sigma(1)},\dots,j_{\sigma(m-2)}}. 
\end{align}
The definition of $\widehat{\beta}$ implies
\begin{align*}
 (\ddiv \widehat{\beta})_{j_1,\dots,j_{m-1}}
   &= (\mathrm{card}(\mathfrak{S}_{m-1}))^{-1} \sum_{k=1}^2 
          (-1)^{\mathfrak{p}(k)}
         \frac{\partial}{\partial x_k}
          \sum_{\substack{\sigma\in\mathfrak{S}_{m-1}\\
                                  j_{\sigma(m-1)}=\mathfrak{p}(k)}} 
              \beta_{j_{\sigma(1)},\dots,j_{\sigma(m-2)}}.
%   &= (\sym\Curl\beta)_{j_1,\dots,j_{m-1}}. 
\end{align*}
Since $j_{\sigma(m-1)}=\mathfrak{p}(k)$ if and only if
$\mathfrak{p}(j_{\sigma(m-1)})=k$, this equals
\begin{align*}
 &(\mathrm{card}(\mathfrak{S}_{m-1}))^{-1} 
      \sum_{k=1}^2    
      \sum_{\substack{\sigma\in\mathfrak{S}_{m-1}\\
                       \mathfrak{p}(j_{\sigma(m-1)})=k}} 
               (-1)^{j_{\sigma(m-1)}} 
              \frac{\partial}{\partial x_{\mathfrak{p}(j_{\sigma(m-1)})}}
              \beta_{j_{\sigma(1)},\dots,j_{\sigma(m-2)}}\\
 &\qquad\qquad = 
 (\mathrm{card}(\mathfrak{S}_{m-1}))^{-1} 
          \sum_{\sigma\in\mathfrak{S}_{m-1}} 
               (-1)^{j_{\sigma(m-1)}} 
              \frac{\partial}{\partial x_{\mathfrak{p}(j_{\sigma(m-1)})}}
              \beta_{j_{\sigma(1)},\dots,j_{\sigma(m-2)}}
\end{align*}
and, hence, the combination of the foregoing two displayed 
formulae with \eqref{e:HOPproofHelmholtzCinftyStep21} leads to 
$\ddiv \widehat{\beta} = \sym\Curl\beta$.
The combination with Step~1 proves $\ddiv \widehat{\beta}= \ddiv \tau$.

\step{3} 
Since $\ddiv(\tau-\widehat{\beta})=0$, 
the base case 
(applied ``row-wise'' to $(\tau-\widehat{\beta})_{j_1,\ldots,j_{m-1},\bullet}$)
guarantees the existence 
of $\gamma\in C^\infty(\Omega;\mathbb{X}(m-1))$ with 
$\tau -\widehat{\beta} = \Curl\gamma$.

\step{4} This step shows $\sym(\widehat{\beta})=0$.

Let $(j_1,\dots,j_m)\in\{1,2\}^m$ be fixed and
let $N_1 :=\mathrm{card}(\{k\in\{1,\dots,m\}\mid j_k=1\})$ 
and $N_2 := \mathrm{card}(\{k\in\{1,\dots,m\}\mid j_k=2\})$
be the number of ones and twos.
Then 
\begin{align}\label{e:HOPproofHelmholtzCinftyStep31}
 M_1(j_m):=N_1-(2-j_m)
 \quad\text{and}\quad M_2(j_m):=N_2-(j_m-1)
\end{align}
are the numbers 
of ones and twos in $(j_1,\dots,j_{m-1})$.
% Define $k_1 := \ldots := k_{N_1-1}:=1$ and
% $k_{N_1} :=\ldots:=k_{m-2}:=2$. 
Define the index set 
\begin{align*}
 \mathfrak{T}:=\left\{(k_1,\ldots,k_{m-2})\in \{1,2\}^{m-2}\;\left|\;
\sum_{\ell=1}^{m-2} k_\ell = (N_1-1)+2 (N_2-1)\right.\right\}.
\end{align*}
This set $\mathfrak{T}$ contains exactly all indices $(k_1,\ldots,k_{m-2})$ with 
$(N_1-1)$ many ones and $(N_2-1)$ many twos.
Note that $j_{\sigma(m-1)}=\mathfrak{p}(j_m)$
implies that $\{j_{\sigma(m-1)},j_m\}=\{1,2\}$ and
the elements of $\mathfrak{T}$ are the only indices which 
appear as indices of $\beta$ in the sum 
in \eqref{e:HOPproofHelmholtzCinftyStep22}.
For $\mathbf{j}\in \mathfrak{T}$, each $\beta_\mathbf{j}$
appears $M_1(j_m)! M_2(j_m)!$~times in that sum.
This and \eqref{e:HOPproofHelmholtzCinftyStep22} yield
\begin{align*}
 \widehat{\beta}_{j_1,\dots,j_m}
   = (-1)^{\mathfrak{p}(j_{m})}
        (\mathrm{card}(\mathfrak{S}_{m-1}))^{-1}\, M_1(j_m)!\, M_2(j_m)!\,
      \sum_{\mathbf{j}\in\mathfrak{T}} \beta_{\mathbf{j}}.
\end{align*}
This reveals
\begin{align*}
 (\sym &\widehat{\beta})_{j_1,\dots,j_m}
   = (\mathrm{card}(\mathfrak{S}_m))^{-1} \sum_{\sigma\in\mathfrak{S}_m} 
              \widehat{\beta}_{j_{\sigma(1)},\dots,j_{\sigma(m)}}\\
   &= (\mathrm{card}(\mathfrak{S}_m))^{-1} (\mathrm{card}(\mathfrak{S}_{m-1}))^{-1}\\
  &\qquad\qquad\qquad  \times    \left(\sum_{\mathbf{j}\in\mathfrak{T}} \beta_{\mathbf{j}}\right)
         \sum_{\sigma\in\mathfrak{S}_m} 
          (-1)^{\mathfrak{p}(j_{\sigma(m)})}\,
          M_1(j_{\sigma(m)})!\, M_2(j_{\sigma(m)})!.
\end{align*}
A reordering of the summands and the definition of $M_1$ and $M_2$ 
in \eqref{e:HOPproofHelmholtzCinftyStep31} leads to 
\begin{align*}
  \sum_{\sigma\in\mathfrak{S}_m} 
          (-1)^{\mathfrak{p}(j_{\sigma(m)})}\,
      &    M_1(j_{\sigma(m)})!\, M_2(j_{\sigma(m)})!\\
    & = \Bigg(M_1(1)!\, M_2(1)! \sum_{\substack{\sigma\in\mathfrak{S}_m\\
                  j_{\sigma(m)}=1}} 1\Bigg)
        - \Bigg(M_1(2)!\, M_2(2)! \sum_{\substack{\sigma\in\mathfrak{S}_m\\
                  j_{\sigma(m)}=2}} 1\Bigg)\\
   & = (N_1-1)!\, N_2! \mathrm{card}(\{\sigma\in\mathfrak{S}_m\mid j_{\sigma(m)} = 1\})\\
   & \qquad\qquad- N_1!\, (N_2-1)! \mathrm{card}(\{\sigma\in\mathfrak{S}_m\mid j_{\sigma(m)}=2\}).
\end{align*}
Since $\mathrm{card}(\{\sigma\in\mathfrak{S}_m\mid j_{\sigma(m)} = 1\}) 
= N_1\, \mathrm{card}(\mathfrak{S}_{m-1})$ 
and $\mathrm{card}(\{\sigma\in\mathfrak{S}_m\mid j_{\sigma(m)}=2\}) 
= N_2\, \mathrm{card}(\mathfrak{S}_{m-1})$,
this vanishes. This proves $\sym\widehat{\beta}=0$.

\step{5}
Step~4 and $\tau\in C^\infty(\Omega;\Sym(m))$ leads to  
$\tau = \sym(\tau) = \sym(\tau -\widehat\beta)$.
Step~3 then yields $\tau = \sym\Curl\gamma$
and concludes the proof.
\end{proof}

The following theorem states a Helmholtz decomposition into $m$th derivatives 
and symmetric parts of Curls. The proof uses Theorem~\ref{t:HOPhelmholtzCinfty} 
and a density argument.
The following assumption assumes that the constant in 
Theorem~\ref{t:HOPcurllesssymcurl2} does continuously depend on the domain.
To this end, define 
\begin{align}\label{e:HOPdefOmegaeps}
 \Omega_{\varepsilon}:=\{x\in\Omega\mid 
\mathrm{dist}(x,\partial\Omega)>\varepsilon\}.
\end{align}

\begin{assumption}\label{as:HOPdepdomain}
There exist sequences $(\varepsilon_n)_{n\in\mathbb{N}}\in\R^\mathbb{N}$,
$(\delta_n)_{n\in\mathbb{N}}\in\R^\mathbb{N}$, 
and $(\Omega^{(n)})_{n\in\mathbb{N}}$ with 
$\Omega_{\delta_n}\subseteq\Omega^{(n)}\subseteq\Omega_{\varepsilon_n}\subseteq\Omega$
and $\varepsilon_n\to 0$ and $\delta_n\to 0$ as $n\to\infty$,
such that the constants $C_{n}$ from 
Theorem~\ref{t:HOPcurllesssymcurl2} with respect to 
$\Omega^{(n)}$
are uniformly bounded, $\sup_{n\in\mathbb{N}} C_{n} \lesssim 1$.
\end{assumption}

\begin{remark}
Remark~\ref{r:HOPdepdomain} implies that Assumption~\ref{as:HOPdepdomain} is 
fulfilled on star-shaped domains.
\end{remark}

Recall the definition of $Y$ from~\eqref{e:HOPdefY}.

\begin{theorem}[Helmholtz decomposition for higher-order derivatives]
\label{t:HOPhelmholtz}
If Assumption~\ref{as:HOPdepdomain} is satisfied, then it holds that 
\begin{align*}
 L^2(\Omega;\Sym(m))
  = D^m(H^m_0(\Omega)) 
    \oplus \sym\Curl Y
\end{align*}
and the decomposition is orthogonal in $L^2(\Omega;\Sym(m))$.
For any $\tau\in L^2(\Omega;\Sym(m))$, $u\in H^m_0(\Omega)$, and $\alpha\in Y$
with $\tau = D^m u + \sym\Curl \alpha$, the function $u\in H^m_0(\Omega)$
solves
\begin{align}\label{e:HOPhelmholtzSolutionProp}
 (D^m u,D^m v) = (\tau,D^m v)
 \qquad\text{for all }v\in H^m_0(\Omega).
\end{align}
\end{theorem}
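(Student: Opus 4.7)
The plan is to reduce the statement to the smooth $m$-divergence-free case covered by Theorem~\ref{t:HOPhelmholtzCinfty}. I would begin with $L^2$-orthogonality of $D^m H^m_0(\Omega)$ and $\sym\Curl Y$: given $u\in H^m_0(\Omega)$ and $\alpha\in Y$, the symmetry of $D^m u$ in its indices gives $(D^m u,\sym\Curl\alpha)_{L^2(\Omega)}=(D^m u,\Curl\alpha)_{L^2(\Omega)}$, and writing $D^m u=D(D^{m-1}u)$ with $D^{m-1}u\in H^1_0(\Omega;\Sym(m-1))$, a single integration by parts reduces each $(j_1,\dots,j_{m-1})$-component of the remaining integrand to $\sum_{k=1,2}(-1)^k\partial_k\partial_{\mathfrak{p}(k)}\alpha_{j_1,\dots,j_{m-1}}$, which vanishes identically (the row-wise vanishing of $\ddiv\Curl$).

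Given $\tau\in L^2(\Omega;\Sym(m))$, I would define the function $u$ in the decomposition by the variational problem~\eqref{e:HOPhelmholtzSolutionProp}, which admits a unique solution in $H^m_0(\Omega)$ by the Lax--Milgram theorem, since iterated Poincar\'e makes $v\mapsto\|D^m v\|_{L^2(\Omega)}$ an equivalent norm on $H^m_0(\Omega)$. Setting $\rho:=\tau-D^m u$, the variational equation becomes $(\rho,D^m v)_{L^2(\Omega)}=0$ for all $v\in H^m_0(\Omega)$, that is, $\ddiv^m\rho=0$ in the sense of distributions.

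The main obstacle is to represent $\rho$ as $\sym\Curl\alpha$ for some $\alpha\in Y$, since Theorem~\ref{t:HOPhelmholtzCinfty} applies only to smooth tensors. The strategy is to mollify on the subdomains $\Omega^{(n)}$ from Assumption~\ref{as:HOPdepdomain}: Friedrichs' mollifier commutes with $\ddiv^m$ and preserves symmetry, so it produces $\rho_n\in C^\infty(\Omega^{(n)};\Sym(m))$ with $\ddiv^m\rho_n=0$ and $\rho_n\to\rho$ in $L^2$ on every compact subset of $\Omega$. Theorem~\ref{t:HOPhelmholtzCinfty} then furnishes $\alpha_n\in C^\infty(\Omega^{(n)};\mathbb{X}(m-1))$ with $\sym\Curl\alpha_n=\rho_n$. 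A direct index-counting computation shows that symmetrising $\alpha_n$ over its $m-1$ indices leaves $\sym\Curl\alpha_n$ unchanged, so one may assume $\alpha_n\in C^\infty(\Omega^{(n)};\Sym(m-1))$. Subtracting its mean and the $(\Curl\bullet,\Curl\bullet)_{L^2}$-projection onto the finite-dimensional kernel $Z^{(n)}$ of $\sym\Curl$ places $\alpha_n$ in the analogue $Y^{(n)}$ of $Y$ on $\Omega^{(n)}$ without altering $\sym\Curl\alpha_n$. Theorem~\ref{t:HOPcurllesssymcurl2} together with the uniform constant from Assumption~\ref{as:HOPdepdomain} and Poincar\'e on $\Omega^{(n)}$ then yields the uniform bound $\|\alpha_n\|_{H^1(\Omega^{(n)})}\lesssim\|\rho\|_{L^2(\Omega)}$. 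A diagonal argument along an exhaustion of $\Omega$ by compact subsets extracts a weak $H^1_{\mathrm{loc}}$ limit $\alpha\in H^1(\Omega;\Sym(m-1))$ with $\sym\Curl\alpha=\rho$ almost everywhere in $\Omega$, and a final subtraction of its mean and its $Z$-component yields $\alpha\in Y$.

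Combining the two parts gives $\tau=D^m u+\sym\Curl\alpha$, and the orthogonality established in the first paragraph immediately yields $(\sym\Curl\alpha,D^m v)_{L^2(\Omega)}=0$ for all $v\in H^m_0(\Omega)$, so that $(D^m u,D^m v)_{L^2(\Omega)}=(\tau,D^m v)_{L^2(\Omega)}$, which is exactly~\eqref{e:HOPhelmholtzSolutionProp}.
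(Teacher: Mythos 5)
Your proposal is correct and follows essentially the same route as the paper: define $u$ by \eqref{e:HOPhelmholtzSolutionProp}, mollify the residual $\rho=\tau-D^mu$ on the subdomains $\Omega^{(n)}$ of Assumption~\ref{as:HOPdepdomain}, represent the smooth $m$-divergence-free mollifications via Theorem~\ref{t:HOPhelmholtzCinfty}, normalise into the local analogue of $Y$, use Theorem~\ref{t:HOPcurllesssymcurl2} with the uniform constant for uniform $H^1$ bounds, pass to a weak limit, and project onto $Y$; the orthogonality is the standard row-wise $\nabla$--$\Curl$ orthogonality in both arguments. The only deviations are harmless implementation details: you obtain the limit by a diagonal argument over an exhaustion (with the uniform bound yielding global $H^1$) instead of the Lions--Magenes extension used in the paper, and you explicitly symmetrise $\alpha_n$ via the identity $\sym\Curl(\sym\gamma)=\sym\Curl\gamma$, a point the paper's projection step leaves implicit (your claim that $Z^{(n)}$ is finite-dimensional is unnecessary --- closedness suffices for the projection --- but does no harm).
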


\begin{proof}
Given $\tau\in L^2(\Omega;\Sym(m))$,
let $u\in H^m_0(\Omega)$ be the solution to \eqref{e:HOPhelmholtzSolutionProp}.
Define $r:=\tau-D^m u\in L^2(\Omega;\Sym(m))$ 
with $\ddiv^m r =0$.

Let $(\varepsilon_n)_{n\in\mathbb{N}}$, $(\delta_n)_{n\in\mathbb{N}}$, and 
$(\Omega^{(n)})_{n\in\mathbb{N}}$ denote the sequences from 
Assumption~\ref{as:HOPdepdomain} and let 
$\eta_n\in C^\infty_c(\R^2)$ denote the standard mollifier \cite{Evans2010} with 
compact support $\mathrm{supp}(\eta_n)$ in the ball $B_{\varepsilon_n}(0)$ with 
radius $\varepsilon_n$ and centre $0$.
Define the regularized function 
$r_n :=r*\eta_n\in C^\infty(\Omega;\Sym(m))$ with convolution $*$.
Then $r_n\to r$ in $L^2(\Omega;\Sym(m))$ as $n\to\infty$.
Recall the definition of $\Omega_{\varepsilon_n}$ from \eqref{e:HOPdefOmegaeps}. 
Since $\mathrm{supp}(\eta_n)\subseteq B_{\varepsilon_n}(0)$ and 
$\ddiv^m r=0$, it follows 
$(\ddiv^m r_n)\vert_{\Omega_{\varepsilon_n}} 
= (r * D^{m} \eta_n)\vert_{\Omega_{\varepsilon_n}} = 0$.
Since $\Omega^{(n)}\subseteq\Omega_{\varepsilon_n}$,
Theorem~\ref{t:HOPhelmholtzCinfty} guarantees the existence 
of $\gamma_n\in C^\infty(\Omega^{(n)};\mathbb{X}(m-1))$ with 
$r_n\vert_{\Omega^{(n)}} = \sym\Curl \gamma_n$.
Recall $\mathfrak{H}(m-1)$ from \eqref{e:HOPdefY} and define
\begin{align*}
  Z_n&:=\{\beta_n\in \mathfrak{H}(\Omega^{(n)},m-1)\mid 
     \sym\Curl\beta_n = 0\},\\
  Y_n&:=\{\zeta_n\in \mathfrak{H}(\Omega^{(n)},m-1)\mid
     \forall\beta_n \in Z_n:\; 
       (\Curl\beta_n,\Curl\zeta_n)_{L^2(\Omega)}=0\}.
\end{align*}
Let $\widetilde{\gamma}_n\in Y_n$ be the orthogonal 
projection (with respect to $(\Curl\bullet,\Curl\bullet)_{L^2(\Omega)}$) of 
$\gamma_n$ to $Y_n$. Then $\gamma_n - \widetilde{\gamma}_n\in Z_n$ and, hence, 
$\sym\Curl\widetilde{\gamma}_n = \sym\Curl\gamma_n = r_n\vert_{\Omega^{(n)}}$.
Let $\rho_n\in H^1(\Omega;\mathbb{X}(m-1))$ denote the extension of 
$\widetilde{\gamma}_n$ to $\Omega$ with $\|\rho_n\|_{H^1(\Omega)}
\lesssim\|\widetilde{\gamma}_n\|_{H^1(\Omega^{(n)})}$ \cite[Theorem~8.1]{LionsMagenes1972}. 
This, a Poincar\'e inequality, and Theorem~\ref{t:HOPcurllesssymcurl2} together 
with Assumption~\ref{as:HOPdepdomain} imply
\begin{align*}
 \left \|\rho_n\right\|_{H^1(\Omega)}
  \lesssim \left\|\Curl\widetilde{\gamma}_n\right\|_{L^2(\Omega^{(n)})}
  \lesssim \left\|\sym\Curl\widetilde{\gamma}_n\right\|_{L^2(\Omega^{(n)})}
  = \left\|r_n\right\|_{L^2(\Omega^{(n)})} \lesssim 1.
\end{align*}
Since $H^1(\Omega;\mathbb{X}(m-1))$ is reflexive,
there exists a subsequence of $(\rho_{n})_{n\in\mathbb{N}}$
(again denoted by $\rho_n$) and 
$\gamma\in H^1(\Omega;\mathbb{X}(m-1))$ with 
$\rho_n\rightharpoonup \gamma$ in 
$H^1(\Omega;\mathbb{X}(m-1))$.
Let $\varphi\in L^2(\Omega;\mathbb{X}(m))$ with 
$\mathrm{supp}(\varphi)\subseteq \Omega_{\delta_n}$. Since 
$\Omega_{\delta_n}\subseteq \Omega^{(n)}$ and therefore
$\sym\Curl\rho_n\vert_{\Omega_{\delta_n}}
=\sym\Curl\widetilde{\gamma}_n\vert_{\Omega_{\delta_n}} = 
r_n$, it follows
\begin{align*}
 (\varphi,\sym\Curl\gamma)_{L^2(\Omega)} 
   = (\varphi,r)_{L^2(\Omega)} + 
      (\varphi,\sym\Curl(\gamma-\rho_n))_{L^2(\Omega)} 
       + (\varphi,r_n-r)_{L^2(\Omega)}.
\end{align*}
Since $\rho_n\rightharpoonup\gamma$ in 
$H^1(\Omega;\mathbb{X}(m-1))$
and $r_n\to r$ in $L^2(\Omega;\Sym(m))$
and $\delta_n\to 0$, 
this leads to $\sym\Curl\gamma=r$.
Let $\rho\in Y$ be the orthogonal projection of $\gamma$ to $Y$ (with 
respect to $(\Curl\bullet,\Curl\bullet)_{L^2(\Omega)}$). Then 
$\rho-\gamma\in Z$ and, hence, $\sym\Curl\rho=\sym\Curl\gamma=r$.
This proves the decomposition.

Since $\Curl$ is the row-wise application of the standard Curl operator,
the $L^2$ orthogonality of $\Curl$ and $\nabla$ for scalar-valued 
functions and the symmetry of $D^m$ prove the $L^2$ orthogonality of 
$\sym\Curl$ and $D^m$.
\end{proof}

\section{Weak formulation and discretization}\label{s:HOPformDiscrete}

Subsection~\ref{ss:HOPweakform} introduces
the weak formulation of 
problem~\eqref{e:introHOP} based on the Helmholtz decomposition
from Section~\ref{s:HOPhelmholtzdecomposition} and its discretization
follows in Subsection~\ref{ss:HOPdiscretisation}.

\subsection{Weak formulation}\label{ss:HOPweakform}

Recall the definition of the divergence from Section~\ref{s:HOPnotation}
and the definition of $Y$ from~\eqref{e:HOPdefY}.
Let $\varphi\in H(\ddiv^m,\Omega)$ with $(-1)^m\ddiv^m\varphi=f$
and consider the problem:
Seek $(\sigma,\alpha)\in L^2(\Omega;\Sym(m))\times Y$
with 
\begin{equation}\label{e:HOPmixedproblem}
\begin{aligned}
 (\sigma,\tau)_{L^2(\Omega)} + (\tau,\sym\Curl \alpha)_{L^2(\Omega)} 
  &= (\varphi,\tau)_{L^2(\Omega)}
 &&\text{ for all }\tau\in L^2(\Omega;\Sym(m)),\\
 (\sigma,\sym\Curl \beta)_{L^2(\Omega)} &= 0 
 &&\text{ for all }\beta\in Y.
\end{aligned}
\end{equation}

The following theorem states the equivalence of this problem 
with \eqref{e:introHOP}.

\begin{theorem}[existence of solutions]\label{t:HOPexistence}
There exists a unique solution 
$(\sigma,\alpha)\in L^2(\Omega;\Sym(m))\times Y$ to \eqref{e:HOPmixedproblem}
with 
\begin{align}\label{e:HOPstability}
  \|\sigma\|_{L^2(\Omega)}^2 + \left\|\Curl\alpha\right\|_{L^2(\Omega)}^2 
 \lesssim
 \|\sigma\|_{L^2(\Omega)}^2 + \left\|\sym\Curl\alpha\right\|_{L^2(\Omega)}^2 
  = \left\|\sym\varphi\right\|_{L^2(\Omega)}^2.
\end{align}
If Assumption~\ref{as:HOPdepdomain} is satisfied, then $(\sigma,\alpha)$ satisfies 
$\sigma=D^m u$ for the solution $u\in H^m_0(\Omega)$ to
\eqref{e:introHOP}.
\end{theorem}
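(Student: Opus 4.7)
My plan is to verify the Brezzi conditions for the mixed saddle-point problem~\eqref{e:HOPmixedproblem} to obtain well-posedness, then extract the identity $\sigma+\sym\Curl\alpha=\sym\varphi$ and combine it with the $L^2$-orthogonality of $\sigma$ and $\sym\Curl\alpha$ to get the stability bound, and finally identify $\sigma$ with $D^m u$ via the Helmholtz decomposition.

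First, I would set $a(\sigma,\tau):=(\sigma,\tau)_{L^2(\Omega)}$ and $b(\tau,\beta):=(\tau,\sym\Curl\beta)_{L^2(\Omega)}$ on $L^2(\Omega;\Sym(m))\times Y$. The form $a$ is trivially coercive on the whole of $L^2(\Omega;\Sym(m))$ because $a(\tau,\tau)=\|\tau\|_{L^2(\Omega)}^2$, so the coercivity on the kernel of $b$ is free. For the inf-sup condition, I would use the test $\tau:=\sym\Curl\beta\in L^2(\Omega;\Sym(m))$, which is admissible because $\sym\Curl\beta$ is symmetric; then $b(\tau,\beta)=\|\sym\Curl\beta\|_{L^2(\Omega)}^2$, and Theorem~\ref{t:HOPcurllesssymcurl2} furnishes $\|\sym\Curl\beta\|_{L^2(\Omega)}\gtrsim\|\Curl\beta\|_{L^2(\Omega)}$, the latter being equivalent (by Poincar\'e) to the natural norm on $Y\subseteq\mathfrak{H}(\Omega,m-1)$. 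The abstract Brezzi theorem then yields existence, uniqueness, and an a priori bound on $(\sigma,\alpha)$.

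Next, testing the first equation of~\eqref{e:HOPmixedproblem} against arbitrary $\tau\in L^2(\Omega;\Sym(m))$ and using that $\sym\varphi$ produces the same $L^2$ pairing against symmetric tensors as $\varphi$ itself, I obtain the pointwise identity $\sigma+\sym\Curl\alpha=\sym\varphi$. The second equation with $\beta:=\alpha$ yields the $L^2$-orthogonality $(\sigma,\sym\Curl\alpha)_{L^2(\Omega)}=0$, so expanding $\|\sigma+\sym\Curl\alpha\|_{L^2(\Omega)}^2$ gives the Pythagorean identity $\|\sym\varphi\|_{L^2(\Omega)}^2=\|\sigma\|_{L^2(\Omega)}^2+\|\sym\Curl\alpha\|_{L^2(\Omega)}^2$. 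Combined with Theorem~\ref{t:HOPcurllesssymcurl2} controlling $\|\Curl\alpha\|_{L^2(\Omega)}$ by $\|\sym\Curl\alpha\|_{L^2(\Omega)}$ on $Y$, this produces~\eqref{e:HOPstability}.

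For the final identification, I would invoke the Helmholtz decomposition (Theorem~\ref{t:HOPhelmholtz}) under Assumption~\ref{as:HOPdepdomain}. The second equation of~\eqref{e:HOPmixedproblem} says exactly that $\sigma$ is $L^2$-orthogonal to $\sym\Curl Y$, so $\sigma\in D^m(H^m_0(\Omega))$ by the orthogonal decomposition; write $\sigma=D^m w$ for a unique $w\in H^m_0(\Omega)$. Then for any $v\in H^m_0(\Omega)$, the orthogonality of $\sym\Curl\alpha$ to $D^m v$ and the identity $\sigma+\sym\Curl\alpha=\sym\varphi$ give $(D^m w,D^m v)_{L^2(\Omega)}=(\sym\varphi,D^m v)_{L^2(\Omega)}=(\varphi,D^m v)_{L^2(\Omega)}$, and the definition of $\ddiv^m$ together with $(-1)^m\ddiv^m\varphi=f$ turns the right-hand side into $(f,v)_{L^2(\Omega)}$. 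Thus $w$ is the unique weak solution to~\eqref{e:introHOP} and $\sigma=D^m u$. I expect the only subtlety to be bookkeeping the symmetric/non-symmetric distinction between $\varphi$ and $\sym\varphi$ and the correct sign in the definition of $\ddiv^m$; everything else reduces to Brezzi theory, Theorem~\ref{t:HOPcurllesssymcurl2}, and the Helmholtz decomposition already established.
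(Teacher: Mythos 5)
Your proposal is correct and follows essentially the same route as the paper: Brezzi's splitting lemma with the inf-sup condition deduced from Theorem~\ref{t:HOPcurllesssymcurl2} (via the test function $\tau=\sym\Curl\beta$), the identity $\sigma+\sym\Curl\alpha=\sym\varphi$ together with the orthogonality from the second equation for the stability bound~\eqref{e:HOPstability}, and the Helmholtz decomposition of Theorem~\ref{t:HOPhelmholtz} to identify $\sigma=D^m u$. The additional details you supply (the explicit Pythagoras step and the choice of test tensor in the inf-sup) are exactly what the paper leaves implicit, so no gap remains.
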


Note that $\sigma=D^m u$ is satisfied for any 
$\varphi\in H(\ddiv^m,\Omega)$ with $(-1)^m\ddiv^m\varphi=f$, while 
$\sym\Curl\alpha=\varphi-D^m u$ depends on the choice of $\varphi$.

\begin{proof}[Proof of Theorem~\ref{t:HOPexistence}]
The inf-sup condition 
\begin{align*}
 \left\|\Curl\beta\right\|_{L^2(\Omega)}
 \lesssim \sup_{\tau\in L^2(\Omega;\Sym(m))\setminus\{0\}} 
     \frac{(\tau,\sym\Curl\beta)_{L^2(\Omega)}}{\|\tau\|_{L^2(\Omega)}}
\end{align*}
follows from Theorem~\ref{t:HOPcurllesssymcurl2}. This and 
Brezzi's splitting lemma \cite{Brezzi1974} proves the unique 
existence of a solution 
to \eqref{e:HOPmixedproblem}. Since 
$\sigma+\sym\Curl\alpha=\sym(\varphi)$,
Theorem~\ref{t:HOPcurllesssymcurl2} leads to the stability 
\eqref{e:HOPstability}.

If Assumption~\ref{as:HOPdepdomain} is fulfilled, then 
the Helmholtz decomposition of Theorem~\ref{t:HOPhelmholtz} holds and 
the $L^2$ orthogonality of $\sigma$ to $\sym\Curl Y$ yields the existence 
of $\widetilde{u}\in H^m_0(\Omega)$ with $\sigma=D^m\widetilde{u}$.
The orthogonality of Theorem~\ref{t:HOPhelmholtz},  
$(-1)^m\ddiv^m\varphi=f$, and the symmetry of the $m$th derivative 
imply for all $v\in H^m_0(\Omega)$ that
\begin{align*}
  (D^m\widetilde{u},D^m v)_{L^2(\Omega)}
    = (\varphi,D^m v)_{L^2(\Omega)} - (D^m v, \sym\Curl\alpha)_{L^2(\Omega)}
    = (f,v).
\end{align*}
Hence, $\widetilde{u}$ solves \eqref{e:introHOP}.
\end{proof}

\subsection{Discretization}\label{ss:HOPdiscretisation}

The discretization of \eqref{e:HOPmixedproblem} employs the discrete 
spaces 
\begin{align*}
 X_h(\tri)&:=P_k(\tri;\Sym(m)),\\
 Y_h(\tri)&:=P_{k+1}(\tri;\mathbb{X}(m-1))\cap Y
\end{align*}
and seeks $\sigma_h\in X_h(\tri)$ and 
$\alpha_h\in Y_h(\tri)$
with
\begin{equation}\label{e:HOPdP}
\begin{aligned}
 (\sigma_h,\tau_h)_{L^2(\Omega)} + (\tau_h,\sym\Curl \alpha_h)_{L^2(\Omega)} 
   &= (\varphi,\tau_h)_{L^2(\Omega)}
 &&\text{ for all }\tau_h\in X_h(\tri),\\
 (\sigma_h,\sym\Curl \beta_h)_{L^2(\Omega)} &= 0 
 &&\text{ for all }\beta_h\in Y_h(\tri).
\end{aligned}
\end{equation}

\begin{remark}
Note that there is no constraint on the polynomial degree $k\geq 0$.
A discretization with the lowest polynomial degree 
involves only piecewise constant and piecewise affine 
functions for any $m\geq 1$. This should be contrasted to a standard conforming FEM 
where the $H^m_0(\Omega)$ conformity causes that the 
lowest possible polynomial degree is very high (cf.\ the Argyris FEM 
with piecewise $P_5$ functions and 21 local degrees of freedom for $m=2$ or 
the conforming FEM of \cite{Zenisek1970} for arbitrary $m$ with piecewise 
$P_{4(m-1)+1}$ functions).
Discontinuous 
Galerkin FEMs such as $C^0$ interior penalty methods 
\cite{EngelGarikipatiHughesLarsonMazzeiTaylor2002,Brenner2012} 
need at least piecewise $P_2$ functions for $m=2$ and piecewise $P_3$ 
functions for $m=3$ \cite{GudiNeilan2011}.
\end{remark}

\begin{remark}
Since the finite element spaces $X_h(\tri)$ and $Y_h(\tri)$ differ 
only in the number of components and the bilinear forms of 
\eqref{e:HOPdP} are similar for all $m$, an implementation in a single 
program which runs for all $m$ is possible. 
\end{remark}

\begin{remark}[Schur complement]
Since there is no continuity restriction in $X_h(\tri)$ between elements, 
the mass matrix is block diagonal with local mass matrices as sub-blocks.
Therefore, the matrix corresponding to the bilinear form 
$(\bullet,\bullet)_{L^2(\Omega)}$ in~\eqref{e:HOPdP} can be directly inverted.
\end{remark}

\begin{remark}
Problem~\eqref{e:HOPdP} provides an approximation $\sigma_h$ of $D^m u$.
If the function $u$ itself or a lower derivative of $u$ is the quantity of 
interest, it can be approximated by, e.g., a least squares approach. 
For $u_{h,m}:=\sigma_h$ the minimisation of 
\begin{align*}
  \sum_{j=0}^{m-1} \|u_{h,j+1} - D u_{h,j}\|_{L^2(\Omega)}^2
\end{align*}
with respect to $(u_{h,j})_{j=1,\dots,m-1}$ over a suitable finite element 
space results in a series of $m$ Poisson problems and provides an 
approximation $u_{h,0}$ to $u$. 
This ansatz can also be employed to include lower order terms in 
the system, cf.\ \cite{Gallistl2015} for a similar approach.
\end{remark}

\begin{theorem}[best-approximation result]\label{t:HOPbestapprox}
There exists a unique solution $(\sigma_h,\alpha_h)\in X_h(\tri)\times 
Y_h(\tri)$ to~\eqref{e:HOPdP} and it satisfies
\begin{equation}\label{e:HOPapriori}
\begin{aligned}
 & \|\sigma-\sigma_h\|_{L^2(\Omega)} 
    + \left\|\sym\Curl (\alpha-\alpha_h)\right\|_{L^2(\Omega)}\\
 &\qquad\lesssim \min_{\tau_h\in X_h(\tri)} \|\sigma-\tau_h\|_{L^2(\Omega)}
   +\min_{\beta_h\in Y_h(\tri)} 
     \left\|\sym\Curl (\alpha-\beta_h)\right\|_{L^2(\Omega)}.
\end{aligned}
\end{equation}
\end{theorem}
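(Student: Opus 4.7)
The plan is to recast problem~\eqref{e:HOPdP} into Brezzi's abstract saddle-point framework and obtain both well-posedness and quasi-optimality from the standard abstract theory. To this end, set $a(\sigma,\tau):=(\sigma,\tau)_{L^2(\Omega)}$ on $L^2(\Omega;\Sym(m))$ and $b(\tau,\beta):=(\tau,\sym\Curl\beta)_{L^2(\Omega)}$ on $L^2(\Omega;\Sym(m))\times Y$, and equip $Y_h(\tri)$ with the norm $\|\Curl\bullet\|_{L^2(\Omega)}$, which on $Y$ is equivalent to the full $H^1$-norm by Poincar\'e's inequality (as $Y\subseteq\mathfrak{H}(\Omega,m-1)$).

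The first step is to verify the conformity $X_h(\tri)\subseteq L^2(\Omega;\Sym(m))$ and $Y_h(\tri)\subseteq Y$, both immediate by construction, together with the crucial inclusion $\sym\Curl Y_h(\tri)\subseteq X_h(\tri)$. Indeed, any $\beta_h\in Y_h(\tri)\subseteq H^1(\Omega;\Sym(m-1))$ is a \emph{continuous} piecewise polynomial of degree $k+1$, so $\sym\Curl\beta_h$ is a piecewise polynomial of degree $k$ and therefore lies in $P_k(\tri;\Sym(m))=X_h(\tri)$. The deliberate pairing of the polynomial degrees $k$ and $k+1$ is precisely what turns~\eqref{e:HOPdP} into a conforming Galerkin discretization of~\eqref{e:HOPmixedproblem}.

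The second step establishes the two Brezzi conditions uniformly in~$\tri$. Coercivity of $a$ on the discrete kernel is trivial since $a(\tau_h,\tau_h)=\|\tau_h\|_{L^2(\Omega)}^2$ on all of $L^2(\Omega;\Sym(m))$. For the discrete inf-sup condition, I would insert $\tau_h:=\sym\Curl\beta_h\in X_h(\tri)$ (admissible thanks to Step~1) as a test function, which yields
\begin{align*}
 \sup_{\tau_h\in X_h(\tri)\setminus\{0\}}
   \frac{b(\tau_h,\beta_h)}{\|\tau_h\|_{L^2(\Omega)}}
  \geq \|\sym\Curl\beta_h\|_{L^2(\Omega)}
  \gtrsim \|\Curl\beta_h\|_{L^2(\Omega)},
\end{align*}
where the final estimate is Theorem~\ref{t:HOPcurllesssymcurl2} applied to $\beta_h\in Y_h(\tri)\subseteq Y$. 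Because this argument uses only the continuous inequality, the discrete inf-sup constant inherits the continuous one and is, in particular, independent of the mesh-size.

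With both Brezzi conditions in hand and the continuity of $a$ and $b$ being evident from the Cauchy--Schwarz inequality, the abstract theory~\cite{Brezzi1974} yields the unique solvability of~\eqref{e:HOPdP} together with the C\'ea-type quasi-optimality~\eqref{e:HOPapriori}. The main potential obstacle is the verification of the discrete inf-sup condition, but conformity reduces it to the continuous inf-sup statement already established in the proof of Theorem~\ref{t:HOPexistence}; in particular, no Fortin operator or discrete Helmholtz reconstruction is needed, which reflects the natural character of the discretization emphasized in the introduction.
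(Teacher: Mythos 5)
Your proposal is correct and follows essentially the same route as the paper: the conformity of the discretization together with the inclusion $\sym\Curl Y_h(\tri)\subseteq X_h(\tri)$ reduces the discrete inf-sup condition to Theorem~\ref{t:HOPcurllesssymcurl2} (via the test function $\tau_h=\sym\Curl\beta_h$, which is exactly how the paper argues), and Brezzi's splitting lemma with standard mixed-FEM arguments then yields unique solvability and the quasi-optimality~\eqref{e:HOPapriori}. The only implicit step, in your write-up as in the paper's, is that the abstract estimate is stated in the $\left\|\Curl\bullet\right\|_{L^2(\Omega)}$ norm and is converted to the $\sym\Curl$ form using Theorem~\ref{t:HOPcurllesssymcurl2} on $\alpha-\beta_h\in Y$.
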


If the solution is sufficiently smooth, say $\sigma\in H^{k+1}(\Omega;\Sym(2))$
and $\alpha\in H^{k+2}(\Omega;\R^2)$, this yields a convergence rate 
of $\mathcal{O}(h^{k+1})$.

\begin{remark}[computation of $\varphi$]
Given a right-hand side $f$, the discretization~\eqref{e:HOPdP} requires the 
knowledge of a function $\varphi\in H(\ddiv^m,\Omega)$ with 
$(-1)^m\ddiv^m\varphi=f$. This can be computed by an integration of $f$ -- 
manually for a simple $f$ or numerically for a more complicated $f$. This 
can be done in parallel.
However, the numerical experiments of Section~\ref{s:HOPnumerics} and the 
best-approximation result in Theorem~\ref{t:HOPbestapprox}
suggest that the magnitude of the error heavily depends on the choice 
of $\varphi$ (which determines $\sym\Curl\alpha$). 
In Section~\ref{s:HOPnumerics}, the error can be drastically reduced by 
defining $\varphi$ by $\varphi=\Delta^{-1}\nabla\Delta^{-1}\nabla\Delta^{-1} f$
and approximate $\Delta^{-1}$ with standard finite elements 
(see Section~\ref{s:HOPnumerics} for more details).
\end{remark}

{\em Proof of Theorem~\ref{t:HOPbestapprox}.}
Since $\sym\Curl Y_h(\tri)\subseteq X_h(\tri)$, 
Theorem~\ref{t:HOPcurllesssymcurl2} proves
the inf-sup condition 
\begin{align*}
 \left\|\Curl\beta_h\right\|_{L^2(\Omega)}
  \lesssim \sup_{\tau_h\in X_h(\tri)\setminus\{0\}} 
     \frac{(\tau_h,\sym\Curl\beta_h)_{L^2(\Omega)}}{\|\tau_h\|_{L^2(\Omega)}}
  \qquad\text{for all }\beta_h\in Y_h(\tri).
\end{align*}
Brezzi's splitting lemma \cite{Brezzi1974} therefore
leads to the unique existence of a solution of problem \eqref{e:HOPdP}.
This, the conformity of the discretization, and standard arguments for mixed 
FEMs \cite{BoffiBrezziFortin2013} lead to the best-approximation 
result~\eqref{e:HOPapriori}.
\endproof

Define the space of \emph{discrete orthogonal derivatives} as
\begin{align}\label{e:HOPMh}
 W_h(\tri):=\{\tau_h\in X_h(\tri)\mid 
    \forall \beta_h\in Y_h(\tri):\; 
       (\tau_h,\sym\Curl\beta_h)_{L^2(\Omega)}=0\}.
\end{align}
The following lemma proves a projection property.
\begin{lemma}[projection property]\label{l:HOPintegralmean}
Let $\tau\in L^2(\Omega;\Sym(m))$ with 
\begin{align*}
 (\tau,\sym\Curl\beta)_{L^2(\Omega)}=0
  \qquad\text{for all }\beta\in Y.
\end{align*}
Then $\Pi_{X_h(\tri)}\tau\in W_h(\tri)$.
If $\tri_\star$ is 
an admissible refinement of $\tri$ and $\tau_\star\in W_h(\tri_\star)$, then 
$\Pi_{X_h(\tri)}\tau_\star\in W_h(\tri)$.
\end{lemma}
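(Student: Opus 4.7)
The plan is to derive both assertions as one-line consequences of the defining property of the $L^2$ projection, once two elementary facts about the discrete spaces are noted.

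First I would verify that $\sym\Curl$ maps $Y_h(\tri)$ into $X_h(\tri)$. Indeed, for any $\beta_h \in Y_h(\tri) \subseteq P_{k+1}(\tri;\mathbb{X}(m-1))$, Definition~\ref{d:HOPdiff} shows that $\Curl\beta_h$ is piecewise polynomial of degree $k$ with values in $\mathbb{X}(m)$, so that $\sym\Curl\beta_h \in P_k(\tri;\Sym(m)) = X_h(\tri)$. Second, I would observe that $Y_h(\tri) \subseteq Y$ directly from the definition $Y_h(\tri) := P_{k+1}(\tri;\mathbb{X}(m-1)) \cap Y$, and that $Y_h(\tri) \subseteq Y_h(\tri_\star)$ whenever $\tri_\star$ is an admissible refinement of $\tri$, since any piecewise polynomial on the coarser mesh is a piecewise polynomial of the same degree on any refinement.

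With these preparations, the first statement follows by taking an arbitrary $\beta_h \in Y_h(\tri)$ and applying the $L^2$ projection identity
\begin{equation*}
 (\Pi_{X_h(\tri)}\tau,\sym\Curl\beta_h)_{L^2(\Omega)}
  = (\tau,\sym\Curl\beta_h)_{L^2(\Omega)},
\end{equation*}
which is legitimate because $\sym\Curl\beta_h \in X_h(\tri)$. The hypothesis on $\tau$ combined with $\beta_h \in Y$ then makes the right-hand side vanish, and since $\beta_h$ was arbitrary this gives $\Pi_{X_h(\tri)}\tau \in W_h(\tri)$.

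The second statement proceeds in exactly the same way: for any $\beta_h \in Y_h(\tri)$ the nestedness observation puts $\beta_h \in Y_h(\tri_\star)$, and the same projection identity yields
\begin{equation*}
 (\Pi_{X_h(\tri)}\tau_\star,\sym\Curl\beta_h)_{L^2(\Omega)}
  = (\tau_\star,\sym\Curl\beta_h)_{L^2(\Omega)} = 0
\end{equation*}
by the defining property of $W_h(\tri_\star)$. I do not expect any substantial obstacle; the only point requiring a little care is the polynomial-degree bookkeeping ($P_{k+1}$ for the potential, $P_k$ for the stress) that licenses the use of the projection identity and the nestedness of the $Y_h$-spaces under mesh refinement.
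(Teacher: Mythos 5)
Your proof is correct and follows essentially the same route as the paper: use the $L^2$-projection identity for $\sym\Curl\beta_h\in X_h(\tri)$ together with the conformity $Y_h(\tri)\subseteq Y$ (first part) and the nestedness $Y_h(\tri)\subseteq Y_h(\tri_\star)$ under refinement (second part). The paper states this more tersely, leaving the degree bookkeeping and nestedness implicit, but your argument matches it in substance.
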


\begin{proof}
Let $\beta_h\in Y_h(\tri)$. Since 
$\sym\Curl\beta_h\in X_h(\tri)$, the conformity 
$Y_h(\tri)\subseteq Y$ implies 
\begin{equation*}
  (\Pi_{X_h(\tri)}\tau,\sym\Curl\beta_h)= (\tau,\sym\Curl\beta_h)=0.
\end{equation*}
The same arguments apply to $\tau_\star\in W_h(\tri_\star)$.
\end{proof}

\subsection{Application to Kirchhoff plates and the triharmonic equation}\label{ss:HOPplate}

For $m=2$, problem \eqref{e:introHOP} becomes the biharmonic 
problem $\Delta^2 u=f$.
This problem arises in the theory of Kirchhoff plates
with clamped boundary.
In this situation, the Helmholtz decomposition of Theorem~\ref{t:HOPhelmholtz}
is already proved in \cite{BeiraodaVeigaNiiranenStenberg2007}.

The discrete spaces in~\eqref{e:HOPdP} for $m=2$ read 
$X_h=P_k(\tri;\Sym(2))$ with $\Sym(2)$ the space of 
symmetric $2\times 2$ matrices and $Y_h=P_{k+1}(\tri;\R^2)\cap Y$ with $Y$
defined in~\eqref{e:HOPZm2}.
For plate bending problems, \cite{Morley1968} introduced a $P_2$ non-conforming 
finite element method (also called Morley FEM) with non-conforming finite element space
\begin{align*}
 V_M(\tri)&:=\left\{v_h\in P_2(\tri) \left| 
   \begin{array}{l}
     v_h\text{ is continuous at the interior nodes and vanishes at}\\
     \text{boundary nodes; }\nabla_\NC v_h\text{ is continuous at the interior}\\ 
      \text{edges' midpoints and vanishes at the midpoints of }\\
      \text{boundary edges}
   \end{array}
   \right. \right\}.
\end{align*}
The discrete Helmholtz decomposition \cite{CarstensenGallistlHu2014}
\begin{align*}
 P_0(\tri;\Sym(2)) = D_\NC^2 V_M(\tri) 
   \oplus \sym\Curl \big(P_1(\tri;\R^2)\cap Y\big).
\end{align*}
shows for $k=0$ the relation $D_\NC^2 V_M(\tri) = W_h(\tri)$
with $W_h(\tri)$ from \eqref{e:HOPMh} and, hence, the solution 
$\sigma_h$ to \eqref{e:HOPdP} is a piecewise Hessian of a 
Morley function.
If $\varphi$ satisfies $\ddiv^2\varphi=f$ also in the dual space of 
$V_M(\tri)$, then the solution $\sigma_h\in X_h(\tri)$ of~\eqref{e:HOPdP}
coincides with the piecewise Hessian of the solution of the Morley FEM.

For $m=3$, problem~\eqref{e:introHOP} becomes the triharmonic problem 
$-\Delta^3 u=f$. Sixth-order equations arise in the description of the motion of 
thin viscous droplets \cite{BarrettLangdonNuernberg2004}
or of the oxidation of silicon in superconductor devices \cite{King1989}.
For the triharmonic problem, the discrete spaces read $X_h=P_k(\tri;\Sym(3))$ and 
$Y_h=P_{k+1}(\tri;\R^{2\times 2})\cap Y$ with $Y$ defined 
in~\eqref{e:HOPdefY}. The orthogonality onto $Z$ implied by the definition 
of $Y$ can be implemented by Lagrange multipliers and with the 
knowledge of $Z$ from~\eqref{e:Zform=3}.

\section{Adaptive algorithm}\label{s:HOPafem}
This section defines the adaptive algorithm and proves its 
quasi-optimal convergence.

\subsection{Adaptive algorithm and optimal convergence rates}

Let $\tri_0$ denote some initial shape-regular triangulation of $\Omega$, such 
that each triangle $T\in\tri$ is equipped with a refinement edge 
$E_T\in\edges(T)$. We assume that $\tri_0$ fulfils the following initial 
condition.

\begin{definition}[initial condition]
All $T,K\in\tri_0$ with $T\cap K=E\in\edges$ and with refinement edges 
$E_T\in\edges(T)$ and $E_K\in\edges(K)$ satisfy: If $E_T=E$, then $E_K=E_T$. If 
$E_K=E$, then $E_T=E_K$. 
\end{definition}

Given an initial triangulation $\tri_0$,
the set of admissible triangulations $\mathbb{T}$ is 
defined as the set of all regular triangulations 
which can be created from $\tri_0$ by newest-vertex 
bisection (NVB) \cite{Stevenson2008}. 
Let $\mathbb{T}(N)$ denote the subset of all admissible 
triangulations with at most $\mathrm{card}(\tri_0) + N$ triangles.
The adaptive algorithm involves the overlay of two admissible triangulations
$\tri,\tri_\star\in\mathbb{T}$, which reads
\begin{align}\label{e:defoverlay}
  \tri\otimes\tri_\star:=\{T\in\tri\cup\tri_\star\mid 
      \exists K\in\tri,K_\star\in \tri_\star\text{ with }
       T\subseteq K\cap K_\star\}.
\end{align}

The adaptive algorithm is based on separate marking.
Given a triangulation $\tri_\ell$,
define for all $T\in\tri_\ell$ the local error estimator contributions by
\begin{align*}
  \lambda^2(\tri_\ell,T)
    &:= \|h_\tri \curl_\NC \sigma_h\|_{L^2(T)}^2
      + h_T \sum_{E\in \mathcal{E}(T)}  
         \left\|\left[\sigma_h\right]_E\cdot \tau_E \right\|_{L^2(E)}^2,\\
  \mu^2(T)&:= \left\|\sym(\varphi)-\Pi_k\sym(\varphi)\right\|_{L^2(T)}^2
\end{align*}
and the global error estimators by
\begin{align*}
 \lambda_\ell^2&:=\lambda^2(\tri_\ell,\tri_\ell)
 &\text{with }
 &&\lambda^2(\tri_\ell,\mathcal{M})&:=\sum_{T\in \mathcal{M}} \lambda^2(\tri_\ell,T)
 &\text{for all }\mathcal{M}\subseteq\tri_\ell,\\
 \mu^2_\ell&:=\mu^2(\tri_\ell)
 &\text{with }
 &&\mu^2(\mathcal{M})&:=\sum_{T\in\mathcal{M}} \mu^2(T)
 &\text{for all }\mathcal{M}\subseteq\tri_\ell.
\end{align*}
The adaptive algorithm is driven by these two error estimators
and runs the following loop.

\begin{algorithm}[AFEM for higher-order problems]
  \label{a:HOPafem}
\begin{algorithmic}
% \State %dummy
\Require Initial triangulation $\tri_0$, parameters $0<\theta_A\le 1$,
    $0<\rho_B<1$, $0<\kappa$.
\For{$\ell=0,1,2,\dots$}
 \State {\it Solve.}
 Compute solution $(\sigma_{\ell},\alpha_{\ell})\in X_h(\tri_\ell)\times Y_h(\tri_\ell)$ 
 of \eqref{e:HOPdP} with respect
 to $\tri_\ell$.
 \State {\it Estimate.}
 Compute estimator contributions 
 $\big(\lambda^2(\tri_\ell,T)\big)_{T\in\tri_\ell}$ 
 and 
 $\big(\mu^2(T)\big)_{T\in\tri_\ell}$.
%  \State
 \If{$\mu_\ell^2\leq \kappa\lambda_\ell^2$}
 \State   
 {\it Mark.}
 The D\"orfler marking chooses a minimal subset 
 $\mathcal{M}_\ell\subseteq\tri_\ell$
 such that
 \State \qquad
 $
    \theta_A \lambda_\ell^2 
 \le  \lambda_\ell^2 (\tri_\ell,\mathcal{M}_\ell)
%    :=\sum_{T\in\mathcal{M}_\ell}\eta_\ell^2(T).
 $
 \State
 {\it Refine.}
 Generate the smallest admissible refinement
 $\tri_{\ell+1}$ of $\tri_\ell$ in which 
 \State \qquad at least all 
 triangles in $\mathcal{M}_\ell$ are refined.
 \Else 
 \State
   {\it Mark.} Compute an admissible triangulation $\tri\in\mathbb{T}$ with 
    $ \mu_\tri^2\leq \rho_B\mu_\ell^2$.
 \State 
   {\it Refine.} Generate the overlay $\tri_{\ell+1}$ of $\tri_\ell$ and $\tri$
    (cf.~\eqref{e:defoverlay}).
 \EndIf
\EndFor
\Ensure Sequence of triangulations
  $\left(\tri_\ell\right)_{\ell\in\mathbb N_0}$ and
  discrete solutions 
 $(\sigma_{\ell},\alpha_{\ell})_{\ell\in\mathbb{N}_0}$.
\hfill$\blacklozenge$
\end{algorithmic}
\end{algorithm}

The marking in the second case $\mu_\ell^2>\kappa\lambda_\ell^2$
can be realized by the algorithm \texttt{Approx} from 
\cite{CarstensenRabus,BinevDahmenDeVore2004}, i.e. 
the \emph{threshold second algorithm} \cite{BinevDeVore2004} 
followed by a completion algorithm.

For $s>0$ and $(\sigma,\alpha,\varphi)\in L^2(\Omega;\Sym(m))\times Y
\times H(\ddiv^m,\Omega)$, define
\begin{align*}
 \left| (\sigma,\alpha,\varphi)\right|_{\mathcal{A}_s}
   &:= \sup_{N\in\mathbb{N}_0} N^s 
      \inf_{\tri\in\mathbb{T}(N)} \Big( 
          \left\|\sigma-\Pi_{X_h(\tri)} \sigma \right\|_{L^2(\Omega)} \\ 
    &\qquad\qquad 
       + \inf_{\beta_\tri\in Y_h(\tri)} 
            \left\|\sym\Curl(\alpha-\beta_\tri)\right\|_{L^2(\Omega)}
           + \|\varphi - \Pi_{X_h(\tri)} \varphi\|_{L^2(\Omega)}\Big).
\end{align*}

\begin{remark}[pure local approximation class]
A ``row-wise'' application of \cite[Theorem~3.2]{Veeser2014}
proves 
\begin{align}
 \left| (\sigma,\alpha,\varphi)\right|_{\mathcal{A}_s}
  \approx \left| (\sigma,\alpha,\varphi)\right|_{\mathcal{A}_s'}
  &:= \sup_{N\in\mathbb{N}} N^s 
      \inf_{\tri\in\mathbb{T}(N)} \Big( 
          \|\sigma-\Pi_{X_h(\tri)} \sigma \|_{L^2(\Omega)}
  \notag\\
  &\qquad 
    + \left\|\sym(\Curl\alpha) 
           - \Pi_{X_h(\tri)}\sym(\Curl\alpha)\right\|_{L^2(\Omega)}
  \notag\\
  &\qquad    
    + \left\|\sym(\varphi) - \Pi_{X_h(\tri)} \sym(\varphi)\right\|_{L^2(\Omega)}
                                                        \Big).
 \tag*{\qed}
\end{align}
\end{remark}

In the following, we assume that the following axiom (B1) holds for the 
algorithm used in the step {\it Mark} for $\mu_\ell^2>\kappa\lambda_\ell^2$. 
For the algorithm~\texttt{Approx}, this 
assumption is a consequence of Axioms (B2) and (SA) from Subsection~\ref{ss:HOPaxiomB}
\cite{CarstensenRabus}.

\begin{assumption}[(B1) optimal data approximation]\label{as:HOPB1}
Assume that $\left| (\sigma,\alpha,\varphi)\right|_{\mathcal{A}_s}$ is finite.
Given a tolerance $\mathrm{Tol}$, 
the algorithm used in {\it Mark} 
in the second case ($\mu_\ell^2>\kappa\lambda_\ell^2$) 
in Algorithm~\ref{a:HOPafem}
computes $\tri_\star\in\mathbb{T}$ with 
\begin{align*}
 \mathrm{card}(\tri_\star) - \mathrm{card}(\tri_0)
  \lesssim \mathrm{Tol}^{-1/(2s)}
 \qquad\text{and}\qquad 
 \mu^2(\tri_\star)\leq \mathrm{Tol}.
\end{align*}
\end{assumption}

The following theorem states optimal convergence rates of  
Algorithm~\ref{a:HOPafem}.

\begin{theorem}[optimal convergence rates of AFEM]\label{t:HOPoptimalafem}
For $0<\rho_B<1$ and sufficiently small $0<\kappa$ and $0<\theta <1$, 
Algorithm~\ref{a:HOPafem}
computes sequences of triangulations $(\tri_\ell)_{\ell\in\mathbb{N}}$ 
and discrete solutions $(\sigma_\ell,\alpha_\ell)_{\ell\in\mathbb{N}}$ 
for the right-hand side $\varphi$
of optimal rate of convergence in the sense that 
\begin{align*}
 (\mathrm{card}(\tri_\ell) - \mathrm{card}(\tri_0))^s 
  \Big(\|\sigma-\sigma_\ell\|_{L^2(\Omega)} 
     + \left\|\sym\Curl(\alpha-\alpha_\ell)\right\|_{L^2(\Omega)}\Big)
   \lesssim \left| (\sigma,\alpha,\varphi)\right|_{\mathcal{A}_s}.
\end{align*}
\end{theorem}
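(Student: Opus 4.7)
The plan is to deduce Theorem~\ref{t:HOPoptimalafem} from the abstract axioms of adaptivity with separate marking of \cite{CarstensenRabus}. To this end, I would verify axioms (A1)--(A4) for the residual estimator $\lambda$ and the remaining axioms (B2) and (SA) for the oscillation estimator $\mu$; Assumption~\ref{as:HOPB1} is precisely (B1) and is already assumed. Once the axioms are available, the abstract theorem delivers the claimed optimal rates.

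For $\lambda$, first establish \emph{reliability} in the form
\begin{align*}
  \|\sigma-\sigma_h\|_{L^2(\Omega)}
   + \left\|\sym\Curl(\alpha-\alpha_h)\right\|_{L^2(\Omega)}
  \lesssim \lambda_\ell + \mu_\ell.
\end{align*}
The idea is to use the continuous Helmholtz decomposition of Theorem~\ref{t:HOPhelmholtz}: the symmetric Curl part of the error is controlled by a best-approximation of $\alpha$ in $Y_h(\tri)$, hence by $\mu$-like terms, while the $D^m$-part is tested against $\sym\Curl\gamma$ for $\gamma\in Y$. A standard Scott--Zhang type quasi-interpolation $\gamma_h\in Y_h(\tri)$ together with the discrete equation in \eqref{e:HOPdP} produces volume and jump residuals after piecewise integration by parts; Theorem~\ref{t:HOPcurllesssymcurl2} controls $\|\Curl\gamma\|_{L^2(\Omega)}$ by $\|\sym\Curl\gamma\|_{L^2(\Omega)}$, which closes the argument. \emph{Stability} (A1) and \emph{reduction} (A2) follow from inverse and trace inequalities combined with the mesh-size contraction $h_T\le 2^{-1/2}h_{\tri_\ell}$ under newest-vertex bisection. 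The \emph{discrete reliability} (A3),
\begin{align*}
  \|\sigma_\star-\sigma_\ell\|_{L^2(\Omega)}^2
    + \|\sym\Curl(\alpha_\star-\alpha_\ell)\|_{L^2(\Omega)}^2
   \lesssim \lambda^2(\tri_\ell,\tri_\ell\setminus\tri_\star),
\end{align*}
is the key structural step: for any refinement $\tri_\star$ of $\tri_\ell$, the discrete Helmholtz splitting of $\sigma_\star-\sigma_\ell\in X_h(\tri_\star)$ together with the projection property from Lemma~\ref{l:HOPintegralmean} and a discrete quasi-interpolant into $Y_h(\tri_\ell)$ that is the identity on non-refined triangles yields residual contributions supported only on $\tri_\ell\setminus\tri_\star$.

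The \emph{quasi-orthogonality} (A4) is not a direct Galerkin orthogonality because $X_h(\tri_\ell)\not\subseteq X_h(\tri_\star)$ with respect to the saddle-point problem, but it follows in the form required by \cite{CarstensenRabus} from the best-approximation estimate of Theorem~\ref{t:HOPbestapprox}: the primal variable $\sigma_h$ equals the $L^2$ projection of $\varphi$ onto $W_h(\tri)$ by the discrete Helmholtz decomposition induced by \eqref{e:HOPdP}, so Pythagoras holds up to an oscillation term controlled by $\mu$. For $\mu$, (B2) reduction and (SA) stability are classical properties of the $L^2$ projection under NVB refinement and follow since $\Pi_k$ is local and contracts on refined elements.

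The main obstacle I expect is (A3), the discrete reliability. It requires a \emph{discrete} Helmholtz decomposition adapted to $W_h(\tri_\ell)$ and $Y_h(\tri_\ell)$, together with a quasi-interpolation operator $J_\ell: Y_h(\tri_\star)\to Y_h(\tri_\ell)$ that preserves the symmetric-Curl on non-refined triangles and has local stability $\|\Curl(\beta_\star-J_\ell\beta_\star)\|_{L^2(T)}\lesssim \|\Curl\beta_\star\|_{L^2(\omega_T)}$ only for $T\in\tri_\ell\setminus\tri_\star$. Constructing such an operator for arbitrary $m$ and polynomial degree $k$ generalises the constructions available for $m=1,2$ and is the technical heart of the proof; the higher-order generalised Korn inequality from Theorem~\ref{t:HOPcurllesssymcurl2} ensures that the resulting bounds are independent of $m$ up to domain-dependent constants, so that the abstract theorem of \cite{CarstensenRabus} applies uniformly and yields the asserted optimal rate.
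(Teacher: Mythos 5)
Your overall route is the same as the paper's: verify the axioms (A1)--(A4), (B1)/(B2)/(SA) of the separate-marking framework of \cite{CarstensenRabus} and invoke its abstract optimality theorem. However, two of the axioms are not established by your sketch as written. First, your discrete reliability claim
$\|\sigma_\star-\sigma_\ell\|_{L^2(\Omega)}^2+\|\sym\Curl(\alpha_\star-\alpha_\ell)\|_{L^2(\Omega)}^2\lesssim\lambda^2(\tri_\ell,\tri_\ell\setminus\tri_\star)$
cannot hold for general $\varphi\in H(\ddiv^m,\Omega)$: each discrete solution sees $\varphi$ only through $\Pi_{X_h(\tri_\ell)}\varphi$ resp.\ $\Pi_{X_h(\tri_\star)}\varphi$, so testing the error with its $W_h(\tri_\star)$-component produces the term $(\Pi_{X_h(\tri_\ell)}\varphi-\Pi_{X_h(\tri_\star)}\varphi,p_{\tri_\star})_{L^2(\Omega)}$, i.e.\ the data-approximation contribution $\mu^2(\tri_\ell\setminus\tri_\star)$ is unavoidable on the right-hand side (this is exactly how Theorem~\ref{t:HOPdrel} is stated). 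Including $\mu$ there is admissible in the separate-marking framework, so the fix is harmless, but the stronger estimate you propose would already fail for $k=0$ and non-polynomial $\varphi$.

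Second, the quasi-orthogonality is underdeveloped at precisely its critical point. You correctly observe that $\sigma_\ell$ is the $L^2$ projection of $\varphi$ onto $W_h(\tri_\ell)$, but ``Pythagoras up to an oscillation term controlled by $\mu$'' is not available because the spaces $W_h(\tri_\ell)$ from \eqref{e:HOPMh} are \emph{not nested}; this non-nestedness is the whole difficulty (it is the mixed-form incarnation of the non-conformity of Morley/Crouzeix--Raviart). The paper's argument needs (i) the projection property of Lemma~\ref{l:HOPintegralmean} to telescope $\sum_{j}\|\sigma_j-\sigma_{j-1}\|_{L^2(\Omega)}^2$ into $(\varphi,\sigma_{\ell-1}-\sigma_M)_{L^2(\Omega)}$ plus terms that really are data oscillation, and (ii) a bound of this remaining term by $\lambda_{\ell-1}^2+\mu_{\ell-1}^2$, which reuses the integration-by-parts/Scott--Zhang machinery of the discrete-reliability proof together with Theorem~\ref{t:HOPcurllesssymcurl2}; so the defect from orthogonality is estimator-controlled, not purely $\mu$-controlled, and the best-approximation result of Theorem~\ref{t:HOPbestapprox} alone does not yield the summed estimate $\sum_{j\geq\ell}\big(\|\sigma_j-\sigma_{j-1}\|^2+\|\sym\Curl(\alpha_j-\alpha_{j-1})\|^2\big)\lesssim\lambda_{\ell-1}^2+\mu_{\ell-1}^2$ required by the framework. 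Two smaller remarks: the special interpolation operator you worry about is, in the paper, just a componentwise Scott--Zhang interpolant that coincides with the fine-mesh function on unrefined edges and hence on unrefined triangles; and plain reliability need not be proved separately, since it follows from discrete reliability and the convergence guaranteed by Theorem~\ref{t:HOPbestapprox}.
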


The proof follows from the 
abstract framework of \cite{CarstensenRabus},
under the assumptions (A1)--(A4),
which are proved in 
Subsections~\ref{ss:HOPstabilityreduction}--\ref{ss:HOPquasiorthogonality},
the assumption (B1), which follows from (B2) and (SA) from 
Subection~\ref{ss:HOPaxiomB} below for the algorithm \texttt{Approx},
and efficiency of $\sqrt{\lambda^2+\mu^2}$, which follows from the standard 
bubble function technique of \cite{Verfuerth96}.

\subsection{(A1) stability and (A2) reduction}\label{ss:HOPstabilityreduction}

The following two theorems follow from the structure of the error estimator 
$\lambda$.

\begin{theorem}[stability]
Let $\tri_\star$ be an admissible refinement of $\tri$
and $\mathcal{M}\subseteq\tri\cap\tri_\star$.
Let $(\sigma_{\tri_\star},\alpha_{\tri_\star})\in X_h(\tri_\star)\times 
Y_h(\tri_\star)$
and $(\sigma_{\tri},\alpha_{\tri})\in X_h(\tri)\times Y_h(\tri)$
be the respective discrete solutions to \eqref{e:HOPdP}.
Then,
\begin{align*}
  \lvert \lambda(\tri_\star,\mathcal{M}) - 
\lambda(\tri,\mathcal{M})\rvert
   \lesssim \| \sigma_{\tri_\star} - \sigma_\tri\|_{L^2(\Omega)}.
\end{align*}
\end{theorem}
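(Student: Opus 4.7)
The plan is to bound the left-hand side by splitting the local estimator into its volume contribution and the three edge contributions, applying the reverse triangle inequality in $\ell^2$ twice: once to pass from $\lambda^2(\tri_\ell,\mathcal{M}) = \sum_{T\in\mathcal{M}}\lambda^2(\tri_\ell,T)$ to a sum of local differences, and once within each $T$ to separate the four summands. This reduces the proof to controlling
\begin{equation*}
 \sum_{T\in\mathcal{M}} h_T^2\,\|\curl_{\NC}\phi\|_{L^2(T)}^2
 + \sum_{T\in\mathcal{M}} h_T \sum_{E\in\mathcal{E}(T)}
      \big\|[\phi]_E\cdot\tau_E\big\|_{L^2(E)}^2
\end{equation*}
by $\|\phi\|_{L^2(\Omega)}^2$, where $\phi:=\sigma_{\tri_\star}-\sigma_{\tri}$. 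Note that $h_T$ is identical on $\tri$ and $\tri_\star$ for $T\in\mathcal{M}\subseteq\tri\cap\tri_\star$, so the $h_T$-weights cause no trouble.

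The volume term is routine: on each $T\in\mathcal{M}$, both $\sigma_{\tri_\star}|_T$ and $\sigma_{\tri}|_T$ lie in $P_k(T;\Sym(m))$, so $\phi|_T$ is a polynomial of degree $\leq k$. The inverse inequality yields $h_T\|\curl\phi\|_{L^2(T)}\lesssim\|\phi\|_{L^2(T)}$, and summing over $T\in\mathcal{M}$ gives a bound by $\|\phi\|_{L^2(\Omega)}^2$.

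For each edge contribution, write $[\phi]_E = \phi|_T - \phi|_{T^\star_E}$, where $T^\star_E$ is the neighbor of $T$ across $E$ in $\tri_\star$ (and $[\phi]_E=\phi|_T$ on boundary edges). The trace from the $T$-side is immediate: the polynomial trace inequality gives $\sqrt{h_T}\,\|\phi|_T\|_{L^2(E)}\lesssim\|\phi\|_{L^2(T)}$. The delicate point, and the main obstacle of the proof, is the trace from the other side: a priori one might fear that $T'\in\tri$ (the neighbor across $E$ in $\tri$) is refined arbitrarily deeply in $\tri_\star$, producing many tiny sub-triangles along $E$ and spoiling the bound by a factor $h_T/h_{T^\star_E}$. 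The key observation which resolves this is that NVB under the assumed initial condition produces \emph{conforming} refinements: since $T\in\tri_\star$ is unrefined, the edge $E$ of $T$ cannot be bisected in $\tri_\star$ (otherwise a hanging node on $\partial T$ would arise), so $E$ is a full edge of a \emph{single} triangle $T^\star_E\in\tri_\star$ on the other side. Shape regularity, preserved by NVB with a constant depending only on $\tri_0$, then forces $h_{T^\star_E}\approx h_T$. Consequently $\phi|_{T^\star_E}\in P_k(T^\star_E;\Sym(m))$, and the polynomial trace inequality delivers $\sqrt{h_T}\,\|\phi|_{T^\star_E}\|_{L^2(E)}\lesssim \|\phi\|_{L^2(T^\star_E)}$.

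Finally, summing the edge contributions over $T\in\mathcal{M}$ and $E\in\mathcal{E}(T)$, each triangle $T^\star\in\tri_\star$ appears at most a bounded number of times (at most once as $T$ itself for $T\in\mathcal{M}$, and at most three times as some $T^\star_E$), so the double sum is controlled by $\|\phi\|_{L^2(\Omega)}^2$. Combining the volume and edge estimates yields the assertion $|\lambda(\tri_\star,\mathcal{M})-\lambda(\tri,\mathcal{M})|\lesssim\|\sigma_{\tri_\star}-\sigma_{\tri}\|_{L^2(\Omega)}$.
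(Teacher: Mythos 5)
Your proof is correct and takes essentially the same route as the paper, whose proof simply invokes triangle inequalities, inverse inequalities, and the trace inequality of \cite[p.~282]{BrennerScott08} as in \cite[Proposition~3.3]{CasconKreuzerNochettoSiebert2008}. The details you spell out -- in particular that conformity of the admissible refinement $\tri_\star$ forces the far-side neighbour across an edge of an unrefined $T$ to share the full edge, so shape regularity gives comparable mesh sizes and the discrete trace inequality applies -- are exactly what is hidden in that citation.
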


\begin{proof}
This follows with triangle inequalities,
inverse inequalities and the trace inequality from 
\cite[p.~282]{BrennerScott08} as in 
\cite[Proposition~3.3]{CasconKreuzerNochettoSiebert2008}.
\end{proof}

\begin{theorem}[reduction]
Let $\tri_\star$ be an admissible refinement of $\tri$.
Then there exists $0<\rho_2< 1$ and $\Lambda_2<\infty$ such that
\begin{align*}
 \lambda^2(\tri_\star,\tri_\star\setminus\tri)
  \leq \rho_2\lambda^2(\tri,\tri\setminus\tri_\star)
   + \Lambda_2 \|\sigma_{\tri_\star}- \sigma_\tri\|^2.
\end{align*}
\end{theorem}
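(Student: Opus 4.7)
The plan is to mimic the standard reduction argument of Cascón–Kreuzer–Nochetto–Siebert applied element-wise to $\tri_\star\setminus\tri$. For every $T\in\tri_\star\setminus\tri$ there exists an ancestor $T'\in\tri\setminus\tri_\star$ with $T\subsetneq T'$; newest–vertex bisection guarantees at least one bisection between them, hence $\mathrm{meas}_2(T)\leq\frac{1}{2}\mathrm{meas}_2(T')$ and consequently $h_T\leq 2^{-1/2}h_{T'}$. The same reduction factor controls the lengths of sub-edges of $T'$ that appear as edges of $T$.

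I would split $\lambda^2(\tri_\star,T)$ into the volume contribution $\|h_{\tri_\star}\curl_\NC\sigma_{\tri_\star}\|_{L^2(T)}^2$ and the jump contributions, and treat each by a triangle inequality $\sigma_{\tri_\star}=\sigma_\tri+(\sigma_{\tri_\star}-\sigma_\tri)$ combined with Young's inequality: for any $\delta>0$,
\begin{equation*}
(a+b)^2\leq(1+\delta)a^2+(1+\delta^{-1})b^2.
\end{equation*}
The first summand, evaluated on $T$ with the shrunk mesh-size, then gives at most $\frac{1+\delta}{2}\,h_{T'}^2\|\curl_\NC\sigma_\tri\|_{L^2(T)}^2$, which can be summed over $T\subset T'$ to recover $\frac{1+\delta}{2}$ of the corresponding contribution of $\lambda^2(\tri,T')$. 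For edge terms one distinguishes interior refinement edges of $T'$ (where $[\sigma_\tri]_E\cdot\tau_E=0$, so the whole contribution is absorbed into the difference term) from edges lying on $\partial T'$ (where the halved edge length delivers the factor $1/\sqrt 2$ analogously to the volume part, using the standard trace inequality on shape-regular triangles). Summing over $T\in\tri_\star\setminus\tri$ yields a bound of $\frac{1+\delta}{2}\lambda^2(\tri,\tri\setminus\tri_\star)$ plus remainder terms involving $\sigma_{\tri_\star}-\sigma_\tri$.

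The remainder terms are handled by the inverse estimate $\|\curl_\NC w\|_{L^2(T)}\lesssim h_T^{-1}\|w\|_{L^2(T)}$ and the discrete trace inequality $\|w\cdot\tau_E\|_{L^2(E)}\lesssim h_T^{-1/2}\|w\|_{L^2(T)}$, both applied to the piecewise polynomial $w=\sigma_{\tri_\star}-\sigma_\tri$ on $T\in\tri_\star$. After multiplying by the appropriate power of $h_T$, these yield $(1+\delta^{-1})C\|\sigma_{\tri_\star}-\sigma_\tri\|_{L^2(T)}^2$ with $C$ depending only on shape regularity. Choosing any $0<\delta<1$ (for instance $\delta=1/2$) gives the contraction factor $\rho_2:=\frac{1+\delta}{2}<1$, and setting $\Lambda_2:=(1+\delta^{-1})C$ yields the desired estimate.

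The main obstacle is the bookkeeping between the two triangulations: one must verify that each edge of a refined element either carries no jump of $\sigma_\tri$ (so it is fully absorbed into the difference term) or is a proper subset of an edge of $T'$ whose length reduces by at least $2^{-1/2}$ under NVB, so that summation over descendants of each ancestor $T'\in\tri\setminus\tri_\star$ gives back a piece of $\lambda^2(\tri,T')$ with the correct contraction constant. Once this combinatorial step is in place, the analytic ingredients (inverse and trace inequalities, Young's inequality) are entirely routine.
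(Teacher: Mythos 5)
Your argument is exactly the estimator-reduction proof of Casc\'on--Kreuzer--Nochetto--Siebert that the paper itself invokes for this theorem: a triangle/Young splitting $\sigma_{\tri_\star}=\sigma_\tri+(\sigma_{\tri_\star}-\sigma_\tri)$, the NVB mesh-size reduction $h_{\tri_\star}^2\vert_T\leq h_\tri^2\vert_T/2$, vanishing jumps of $\sigma_\tri$ across edges interior to an ancestor $T'\in\tri\setminus\tri_\star$, and inverse/discrete trace estimates for the remainder, so the proposal is correct and essentially identical to the paper's (one-line) proof. One small correction: because the edge contributions carry the weight $h_T$ rather than $h_T^2$, they only contract by $2^{-1/2}$, so the resulting factor is $\rho_2=(1+\delta)2^{-1/2}$ and you need $\delta<\sqrt{2}-1$ (your choice $\delta=1/2$ makes the edge part exceed $1$), which is harmless since the theorem only asserts the existence of some $\rho_2<1$.
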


\begin{proof}
This follows with a triangle inequality and the mesh-size 
reduction property $h_{\tri_\star}^2\vert_T\leq h_\tri^2\vert_T/2$
for all $T\in\tri_\star\setminus\tri$ as in 
\cite[Corollary~3.4]{CasconKreuzerNochettoSiebert2008}.
\end{proof}

\subsection{(A4) discrete reliability}\label{ss:HOPdrel}

The following theorem proves discrete reliability, i.e., 
the difference between two discrete solutions is bounded by the error 
estimators on refined triangles only.

\begin{theorem}[discrete reliability]\label{t:HOPdrel}
 Let $\tri_\star$ be an admissible refinement of $\tri$ with 
respective discrete solutions 
$(\sigma_{\tri_\star},\alpha_{\tri_\star})\in X_h(\tri_\star)\times 
Y_h(\tri_\star)$
and $(\sigma_{\tri},\alpha_{\tri})\in X_h(\tri)\times Y_h(\tri)$ 
of~\eqref{e:HOPdP}.
Then,
\begin{align*}
 \|\sigma_\tri - \sigma_{\tri_\star}\|^2
  + \left\|\sym\Curl(\alpha_\tri-\alpha_{\tri_\star})\right\|_{L^2(\Omega)}
 \lesssim \lambda^2(\tri,\tri\setminus\tri_\star)
      + \mu^2(\tri,\tri\setminus\tri_\star).
\end{align*}
\end{theorem}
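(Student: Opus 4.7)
The plan is to subtract the two discrete mixed systems, reduce the resulting error identity to a quantity localized on the refined triangles, and then control the remaining off-diagonal cross term by a piecewise integration by parts on the coarse mesh $\tri$.

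First, testing the first equation of~\eqref{e:HOPdP} on each mesh against $\tau_h\in X_h(\tri)\subseteq X_h(\tri_\star)$ yields
\begin{equation*}
 \sigma_\tri+\sym\Curl\alpha_\tri=\Pi_{X_h(\tri)}\varphi
  \quad\text{and}\quad
 \sigma_{\tri_\star}+\sym\Curl\alpha_{\tri_\star}=\Pi_{X_h(\tri_\star)}\varphi,
\end{equation*}
so that $e_\sigma+\sym\Curl e_\alpha=\Pi_{X_h(\tri)}\varphi-\Pi_{X_h(\tri_\star)}\varphi=:r$ with $e_\sigma:=\sigma_\tri-\sigma_{\tri_\star}$ and $e_\alpha:=\alpha_\tri-\alpha_{\tri_\star}$. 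On any unrefined triangle $T\in\tri\cap\tri_\star$ the two projections coincide, and a triangle-wise comparison with $\sym\varphi$ gives $\|r\|_{L^2(\Omega)}^2\lesssim\mu^2(\tri,\tri\setminus\tri_\star)$.

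Next, for the cross term $(e_\sigma,\sym\Curl e_\alpha)_{L^2(\Omega)}$ I would exploit the two discrete orthogonalities built into~\eqref{e:HOPdP}. Since $e_\alpha\in Y_h(\tri_\star)$, the second equation on $\tri_\star$ gives $(\sigma_{\tri_\star},\sym\Curl e_\alpha)_{L^2(\Omega)}=0$, so the cross term equals $(\sigma_\tri,\sym\Curl e_\alpha)_{L^2(\Omega)}$. Because $\sigma_\tri\in W_h(\tri)$, I may subtract $\sym\Curl(Je_\alpha)$ for any $Je_\alpha\in Y_h(\tri)$; I would take $J\colon Y_h(\tri_\star)\to Y_h(\tri)$ to be a Scott--Zhang-type quasi-interpolation which preserves $e_\alpha$ on every triangle of $\tri\cap\tri_\star$ without a refined neighbour and which, if necessary, is corrected by a global additive term so that the vanishing integral mean and the $\Curl Z$-orthogonality defining $Y$ are retained. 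The symmetry $\sym\sigma_\tri=\sigma_\tri$ and an element-wise integration by parts on $\tri$ then yield
\begin{equation*}
 (\sigma_\tri,\sym\Curl\psi)_{L^2(\Omega)}
   = -(\curl_\NC\sigma_\tri,\psi)_{L^2(\Omega)}
      +\sum_{E\in\edges}\int_E([\sigma_\tri]_E\cdot\tau_E)\cdot\psi\,ds
\end{equation*}
for $\psi:=e_\alpha-Je_\alpha$. Cauchy--Schwarz, the localization of $\psi$ near $\omega:=\bigcup(\tri\setminus\tri_\star)$, and the usual $L^2$ and edge-trace approximation estimates for $J$ combine to $|(\sigma_\tri,\sym\Curl\psi)_{L^2(\Omega)}|\lesssim\lambda(\tri,\tri\setminus\tri_\star)\,\|De_\alpha\|_{L^2(\Omega)}$, and the generalized Korn inequality of Theorem~\ref{t:HOPcurllesssymcurl2} applied to $e_\alpha\in Y$ bounds $\|De_\alpha\|_{L^2(\Omega)}=\|\Curl e_\alpha\|_{L^2(\Omega)}\lesssim\|\sym\Curl e_\alpha\|_{L^2(\Omega)}$.

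To finish, I would expand $\|r\|_{L^2(\Omega)}^2=\|e_\sigma+\sym\Curl e_\alpha\|_{L^2(\Omega)}^2$, insert the cross-term bound, and absorb a fraction of $\|\sym\Curl e_\alpha\|_{L^2(\Omega)}^2$ on the left-hand side by Young's inequality; this delivers the claim. The main obstacle is the construction of the quasi-interpolation $J$: it must simultaneously respect the global orthogonality constraints that define $Y$, preserve $e_\alpha$ on triangles untouched by the refinement so that $\psi$ is supported in a neighbourhood of $\omega$, and still satisfy $H^1$-stability together with local $L^2$ and trace approximation estimates. Once such a $J$ is in place, the remaining integration-by-parts and Korn arguments are standard.
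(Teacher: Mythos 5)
Your proposal is correct and uses the same essential toolbox as the paper, but it is organized along a genuinely different route. The paper splits $\sigma_\tri-\sigma_{\tri_\star}\in X_h(\tri_\star)$ by the discrete decomposition $p_{\tri_\star}+\sym\Curl r_{\tri_\star}$ with $p_{\tri_\star}\in W_h(\tri_\star)$, $r_{\tri_\star}\in Y_h(\tri_\star)$, treats the $p_{\tri_\star}$-part via the projection property (Lemma~\ref{l:HOPintegralmean}) to produce the $\mu$-term, treats the $r_{\tri_\star}$-part via a Scott--Zhang quasi-interpolant, piecewise integration by parts and Theorem~\ref{t:HOPcurllesssymcurl2}, and only then recovers the $\alpha$-bound by a final triangle inequality from $\sym\Curl\alpha_\tri=\Pi_{X_h(\tri)}\varphi-\sigma_\tri$. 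You instead start from exactly this algebraic identity on both meshes, so that $e_\sigma+\sym\Curl e_\alpha=\Pi_{X_h(\tri)}\varphi-\Pi_{X_h(\tri_\star)}\varphi$ is localized on $\tri\setminus\tri_\star$ and bounded by $\mu(\tri,\tri\setminus\tri_\star)$ without invoking the projection-property lemma, and you control the single cross term $(\sigma_\tri,\sym\Curl e_\alpha)$ by the same quasi-interpolation/integration-by-parts/Korn argument as the paper; the Young absorption then delivers both the $\sigma$- and the $\sym\Curl(\alpha_\tri-\alpha_{\tri_\star})$-bound at once. What each approach buys: the paper's splitting keeps the two error sources strictly separated (and reuses the projection property that also drives quasi-orthogonality), while your version is shorter, avoids the fine-mesh decomposition and the concluding triangle inequality, and makes the role of the data oscillation completely explicit. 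The one delicate point, which you rightly flag, is the quasi-interpolant $J\colon Y_h(\tri_\star)\to Y_h(\tri)$ compatible with the constraints defining $Y$ (vanishing integral mean and $(\Curl\cdot,\Curl\cdot)$-orthogonality to $Z$) while preserving $e_\alpha$ on unrefined patches; note that the paper's own proof simply takes such an interpolant for granted, and that you in fact only need $(\sigma_\tri,\sym\Curl Je_\alpha)=0$, so corrections by elements with vanishing $\sym\Curl$ (constants and $Z$-fields, which lie in $P_{k+1}(\tri)$ whenever $k\ge m-2$, in particular always for $m=2$) suffice and, since they may be absorbed into the kernel rather than into $\psi=e_\alpha-Je_\alpha$, do not spoil the localization to $\tri\setminus\tri_\star$. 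With this observation your argument matches the rigor of the paper's proof.
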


\begin{proof}
Recall the definition of $W_h(\tri_\star)$ from~\eqref{e:HOPMh}.
Since $\sigma_\tri-\sigma_{\tri_\star}\in X_h(\tri_\star)$, there exist
$p_{\tri_\star}\in W_h(\tri_\star)$ and $r_{\tri_\star}\in Y_h(\tri_\star)$
with $\sigma_\tri-\sigma_{\tri_\star} 
= p_{\tri_\star} + \sym\Curl r_{\tri_\star}$.
The discrete error can be split as
\begin{align}\label{e:HOPdrelproof}
 \|\sigma_\tri-\sigma_{\tri_\star}\|_{L^2(\Omega)}^2
 = (\sigma_\tri-\sigma_{\tri_\star},p_{\tri_\star})_{L^2(\Omega)}
   + (\sigma_\tri-\sigma_{\tri_\star}, \sym\Curl r_{\tri_\star})_{L^2(\Omega)}.
\end{align}
The projection property, Lemma~\ref{l:HOPintegralmean},
proves $\Pi_{X_h(\tri)}p_{\tri_\star}\in W_h(\tri)$.
Hence, problem~\eqref{e:HOPdP} implies that the first term of the right-hand 
side equals 
\begin{align*}
 (\sigma_\tri-\sigma_{\tri_\star},p_{\tri_\star})_{L^2(\Omega)}
 = (\Pi_{X_h(\tri)}\varphi - \varphi,p_{\tri_\star})_{L^2(\Omega)}
 = (\Pi_{X_h(\tri)}\varphi - \Pi_{X_h(\tri_\star)}\varphi,
      p_{\tri_\star})_{L^2(\Omega)}.
\end{align*}
For any triangle $T\in\tri\cap\tri_\star$,
it holds $(\Pi_{X_h(\tri)}\varphi - \Pi_{X_h(\tri_\star)}\varphi)\vert_T=0$.
Since $\tri_\star$ is a refinement of $\tri$, this implies
\begin{align*}
 (\Pi_{X_h(\tri)}\varphi - \Pi_{X_h(\tri_\star)}\varphi,
      p_{\tri_\star})_{L^2(\Omega)}
 & \leq \|\Pi_{X_h(\tri)}\varphi 
       - \Pi_{X_h(\tri_\star)}\varphi\|_{\tri\setminus\tri_\star}
     \; \|p_{\tri_\star}\|_{L^2(\Omega)}\\
  &\leq \|\varphi - \Pi_{X_h(\tri)}\varphi\|_{\tri\setminus\tri_\star}
    \; \|p_{\tri_\star}\|_{L^2(\Omega)}.
\end{align*}

Let $r_\tri\in Y_h(\tri)$ denote 
the quasi interpolant from \cite{ScottZhang1990}
of $r_{\tri_\star}$ which satisfies the approximation 
and stability properties 
\begin{align*}
 \|h_\tri^{-1}(r_{\tri_\star}-r_\tri)\|_{L^2(\Omega)}
  + \|D(r_{\tri_\star}-r_\tri)\|_{L^2(\Omega)}
  \lesssim \|D r_{\tri_\star}\|_{L^2(\Omega)}
\end{align*}
and $(r_\tri)\vert_E = (r_{\tri_\star})\vert_E$ for all 
edges $E\in \edges(\tri)\cap\edges(\tri_\star)$.
Since $\sigma_\tri\in W_h(\tri)$ and 
$\sigma_{\tri_\star}\in W_h(\tri_\star)$, 
the symmetry of $\sigma_\tri$ implies
\begin{equation}\label{e:drelproof1}
\begin{aligned}
 (\sigma_\tri-\sigma_{\tri_\star}, \sym\Curl r_{\tri_\star})_{L^2(\Omega)}
  &= (\sigma_\tri, \sym\Curl (r_{\tri_\star}-r_\tri))_{L^2(\Omega)}\\
  &= (\sigma_\tri, \Curl (r_{\tri_\star}-r_\tri))_{L^2(\Omega)}.
\end{aligned}
\end{equation}
An integration by parts leads to 
\begin{align*}
 (\sigma_\tri, \Curl (r_{\tri_\star}-r_\tri))_{L^2(\Omega)}
  &= -(\curl_\NC \sigma_\tri, r_{\tri_\star}-r_\tri)_{L^2(\Omega)}\\
  &\qquad\qquad + \sum_{E\in\mathcal{E}(\tri)} 
         \int_E [\sigma_\tri\cdot\tau_E]_E (r_{\tri_\star}-r_\tri)\,ds.
\end{align*}
For a triangle $T\in \tri\cap\tri_\star$, any edge 
$E\in\edges(T)$ satisfies $E\in\edges(\tri)\cap\edges(\tri_\star)$
and, hence, $(r_\tri)\vert_T = (r_{\tri_\star})\vert_T$
for all $T\in\tri\cap\tri_\star$.
This, the Cauchy inequality, the approximation and 
stability properties of the quasi interpolant, and 
the trace inequality from \cite[p.~282]{BrennerScott08}
lead to 
\begin{equation}\label{e:drelproof2}
\begin{aligned}
 &-(\curl_\NC \sigma_\tri, r_{\tri_\star}-r_\tri)_{L^2(\Omega)}
 +\sum_{E\in\mathcal{E}} 
         \int_E [\sigma_\tri\cdot\tau_E]_E (r_{\tri_\star}-r_\tri)\,ds\\
 &\qquad
  \lesssim \bigg(\|h_\tri \curl_\NC \sigma_\tri\|_{\tri\setminus\tri_\star}\\
  &\qquad\qquad\qquad  
 + \sqrt{\sum_{E\in\edges(\tri)\setminus\edges(\tri_\star)}
       h_T \|[\sigma_\tri\cdot\tau_E]_E\|_{L^2(E)}^2} \bigg)
    \left\|\Curl r_{\tri_\star}\right\|_{L^2(\Omega)}.
\end{aligned}
\end{equation}
The combination of the previous displayed inequalities
yields 
\begin{align*}
 \|\sigma_\tri - \sigma_{\tri_\star}\|_{L^2(\Omega)}^2 
  \lesssim \lambda^2(\tri,\tri\setminus\tri_\star) 
+ \mu^2(\tri,\tri\setminus\tri_\star).
\end{align*}
Since $\Curl\alpha_\tri = \Pi_{X_h(\tri)}\varphi - \sigma_\tri$ and 
$\Curl\alpha_{\tri_\star} = \Pi_{X_h(\tri_\star)}\varphi - \sigma_{\tri_\star}$,
the triangle inequality yields the assertion.
\end{proof}

\begin{remark}[discrete reliability implies reliability]
The convergence of $\sigma_{\tri_\star}$ and $\alpha_{\tri_\star}$,
which is a consequence of the a~priori error estimate 
of Theorem~\ref{t:HOPbestapprox}, and the discrete reliability 
of Theorem~\ref{t:HOPdrel} imply the reliability
\begin{align*}
 \|\sigma-\sigma_\tri\|_{L^2(\Omega)}^2
  + \|\sym\Curl(\alpha - \alpha_\tri)\|_{L^2(\Omega)}^2
   \lesssim \lambda_\ell^2 + \mu_\ell^2.
\end{align*}
\end{remark}

\subsection{(A3) quasi-orthogonality}\label{ss:HOPquasiorthogonality}

The following theorem proves quasi-or\-thog\-o\-nal\-i\-ty of the 
discretization~\eqref{e:HOPdP}.

\begin{theorem}[general quasi-orthogonality]
Let $(\tri_j\mid j\in\mathbb{N})$ be some sequence of triangulations with discrete solutions  
$(\sigma_{j},\alpha_{j})\in X_h(\tri_j)\times Y_h(\tri_j)$
to \eqref{e:HOPdP} and let $\ell\in\mathbb{N}$. Then,
\begin{align*}
 \sum_{j=\ell}^\infty \Big( \|\sigma_j-\sigma_{j-1}\|^2 
          + \left\|\sym\Curl(\alpha_j-\alpha_{j-1})\right\|^2\Big)
    \lesssim \lambda_{\ell-1}^2 + \mu_{\ell-1}^2.
\end{align*}
\end{theorem}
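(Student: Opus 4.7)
The plan is to exploit a hidden Galerkin structure of the mixed method~\eqref{e:HOPdP}. First, testing the first line of~\eqref{e:HOPdP} with arbitrary $\tau_h\in X_h(\tri_j)$ and using that $X_h(\tri_j)$ consists of symmetric tensors yields the direct identity $\sigma_j + \sym\Curl\alpha_j = \Pi_j\sym\varphi$, where $\Pi_j:=\Pi_{X_h(\tri_j)}$. Subtraction at consecutive levels then gives
\begin{equation*}
 (\sigma_j-\sigma_{j-1}) + \sym\Curl(\alpha_j-\alpha_{j-1})
    = \Pi_j\sym\varphi - \Pi_{j-1}\sym\varphi.
\end{equation*}

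Next I would derive a Galerkin-type orthogonality for the sequence $(\alpha_j)$. Applying $\Pi_{j-1}$ to the level-$j$ identity and using nestedness $X_h(\tri_{j-1})\subseteq X_h(\tri_j)$ (so $\Pi_{j-1}\Pi_j=\Pi_{j-1}$), then subtracting the level-$(j-1)$ identity, produces $\Pi_{j-1}\sigma_j - \sigma_{j-1} = \sym\Curl\alpha_{j-1} - \Pi_{j-1}\sym\Curl\alpha_j$. Both $\sigma_{j-1}$ and $\Pi_{j-1}\sigma_j$ lie in $W_h(\tri_{j-1})$---the latter by the projection property of Lemma~\ref{l:HOPintegralmean}. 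Testing with $\sym\Curl\beta_{j-1}$ for $\beta_{j-1}\in Y_h(\tri_{j-1})$ thus annihilates the left-hand side, while the right-hand side reduces (since $\sym\Curl\beta_{j-1}\in X_h(\tri_{j-1})$ is fixed by $\Pi_{j-1}$) to $-(\sym\Curl(\alpha_j-\alpha_{j-1}),\sym\Curl\beta_{j-1})_{L^2(\Omega)}$. An analogous computation comparing the continuous problem to level~$j$, using $D^m H^m_0 \perp \sym\Curl Y$ from Theorem~\ref{t:HOPhelmholtz}, delivers the corresponding continuous orthogonality $(\sym\Curl(\alpha-\alpha_j),\sym\Curl\beta_j)_{L^2(\Omega)}=0$ for $\beta_j\in Y_h(\tri_j)$.

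These orthogonalities produce a Pythagorean telescope,
\begin{equation*}
 \sum_{j=\ell}^\infty \|\sym\Curl(\alpha_j-\alpha_{j-1})\|_{L^2(\Omega)}^2
   \leq \|\sym\Curl(\alpha-\alpha_{\ell-1})\|_{L^2(\Omega)}^2
   \lesssim \lambda_{\ell-1}^2+\mu_{\ell-1}^2,
\end{equation*}
where the last step is the reliability remark after Theorem~\ref{t:HOPdrel}. For the stress, the first-step identity and a triangle inequality give $\|\sigma_j-\sigma_{j-1}\|_{L^2(\Omega)}^2\leq 2\|\Pi_j\sym\varphi-\Pi_{j-1}\sym\varphi\|_{L^2(\Omega)}^2+2\|\sym\Curl(\alpha_j-\alpha_{j-1})\|_{L^2(\Omega)}^2$. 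Mutual orthogonality of the increments of nested $L^2$-projections yields $\sum_{j=\ell}^\infty \|\Pi_j\sym\varphi-\Pi_{j-1}\sym\varphi\|_{L^2(\Omega)}^2 = \|\sym\varphi-\Pi_{\ell-1}\sym\varphi\|_{L^2(\Omega)}^2 = \mu_{\ell-1}^2$, which combined with the $\alpha$-bound closes the estimate.

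The crux is the Galerkin orthogonality for $\alpha_j$ in the second step: it hinges on the projection property of Lemma~\ref{l:HOPintegralmean}, which itself rests on $\sym\Curl Y_h(\tri)\subseteq X_h(\tri)$, i.e.\ on the conformity of the discretization. Without this ingredient the cross term in the Pythagoras expansion would persist and the telescoping sum would not collapse to the reliably-controlled quantity $\|\sym\Curl(\alpha-\alpha_{\ell-1})\|_{L^2(\Omega)}^2$.
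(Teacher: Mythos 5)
Your proposal is correct, but it follows a genuinely different route from the paper's proof. The paper never brings in the exact solution: it telescopes $\sum_j\|\sigma_j-\sigma_{j-1}\|_{L^2(\Omega)}^2$ directly by testing \eqref{e:HOPdP} with $\Pi_{X_h(\tri_{j-1})}\sigma_j$ (admissible in $W_h(\tri_{j-1})$ by Lemma~\ref{l:HOPintegralmean}) and with $\sigma_j$, absorbs the data terms through the same orthogonality of nested projection increments that you use, and then controls the leftover term $(\varphi,\sigma_{\ell-1}-\sigma_M)_{L^2(\Omega)}$ by re-running the integration-by-parts estimate from the discrete-reliability proof; the $\sym\Curl$ increments are recovered only afterwards from $\sym\Curl\alpha_j=\Pi_{X_h(\tri_j)}\varphi-\sigma_j$ and a triangle inequality. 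You instead start from the pointwise identity $\sigma_j+\sym\Curl\alpha_j=\Pi_{X_h(\tri_j)}\sym(\varphi)$ and convert the projection property plus conformity into genuine Galerkin orthogonalities for $\sym\Curl\alpha_j$, so the $\alpha$-increments collapse by Pythagoras to $\left\|\sym\Curl(\alpha-\alpha_{\ell-1})\right\|_{L^2(\Omega)}^2$, which the reliability remark after Theorem~\ref{t:HOPdrel} bounds by $\lambda_{\ell-1}^2+\mu_{\ell-1}^2$; the stress increments then follow as you describe. Both arguments rest on the same two pillars, namely Lemma~\ref{l:HOPintegralmean} (i.e.\ conformity, $\sym\Curl Y_h(\tri)\subseteq X_h(\tri)$) and nestedness of the discrete spaces under admissible refinement, which is tacit in the phrase ``some sequence of triangulations'' but needed by both proofs. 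Your version buys exact orthogonality identities and avoids repeating the discrete-reliability computation, at the price of invoking the continuous solution $(\sigma,\alpha)$ and the reliability remark, whose justification in the paper itself uses convergence of the discrete solutions via Theorem~\ref{t:HOPbestapprox}. Two small refinements: the continuous orthogonality $(\sym\Curl(\alpha-\alpha_j),\sym\Curl\beta_j)_{L^2(\Omega)}=0$ follows already from the second line of \eqref{e:HOPmixedproblem}, so you do not need Theorem~\ref{t:HOPhelmholtz} (and hence not Assumption~\ref{as:HOPdepdomain}) there; moreover, your consecutive-level orthogonality shows that the increments $\sym\Curl(\alpha_j-\alpha_{j-1})$ are mutually orthogonal, so their squared norms sum to $\left\|\sym\Curl(\alpha_M-\alpha_{\ell-1})\right\|_{L^2(\Omega)}^2$, which Theorem~\ref{t:HOPdrel} bounds directly--this variant removes the continuous solution from the argument altogether and is closest in spirit to the paper.
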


\begin{proof}
The projection property, Lemma~\ref{l:HOPintegralmean}, proves 
$\Pi_{X_h(\tri_{j-1})} \sigma_j\in W_h(\tri_{j-1})$ with $W_h(\tri_{j-1})$ 
from~\eqref{e:HOPMh}. Hence, problem~\eqref{e:HOPdP} leads to
\begin{align*}
 (\sigma_{j-1},\sigma_j - \sigma_{j-1})_{L^2(\Omega)} 
    &= (\varphi,\Pi_{X_h(\tri_{j-1})} \sigma_j - \sigma_{j-1})_{L^2(\Omega)},\\
 (\sigma_j,\sigma_j - \sigma_{j-1})_{L^2(\Omega)} 
     &= (\varphi,\sigma_j) - (\varphi,\Pi_{X_h(\tri_{j-1})} \sigma_j)_{L^2(\Omega)}.
\end{align*}
The subtraction of these two equations and an index shift leads, for any 
$M\in\mathbb{N}$ with $M>\ell$, to 
\begin{equation}\label{e:PMPquasiorthogonalityProof1}
\begin{aligned}
 &\sum_{j=\ell}^M \|\sigma_j- \sigma_{j-1}\|_{L^2(\Omega)}^2
   = \sum_{j=\ell}^M 
    (\varphi, \sigma_j - \Pi_{X_h(\tri_{j-1})} \sigma_j)_{L^2(\Omega)} \\
  &\qquad\qquad\qquad\qquad\qquad\qquad
          - \sum_{j=\ell}^M (\varphi,\Pi_{X_h(\tri_{j-1})} \sigma_j)_{L^2(\Omega)}
%    &\qquad\qquad\qquad\qquad\qquad\qquad\qquad\qquad
          + \sum_{j=\ell-1}^{M-1} (\varphi,\sigma_{j})_{L^2(\Omega)}\\
  &\qquad\qquad\qquad
    = (\varphi,\sigma_{\ell-1} - \sigma_M)_{L^2(\Omega)} 
   + 2\sum_{j=\ell}^{M} 
      (\varphi, \sigma_j - \Pi_{X_h(\tri_{j-1})} \sigma_j)_{L^2(\Omega)} .
\end{aligned}
\end{equation}
Since $\sigma_j - \Pi_{X_h(\tri_{j-1})} \sigma_j\in X_h(\tri_j)$ 
is $L^2$-orthogonal to 
$X_h(\tri_{j-1})$, a Cauchy and a weighted Young inequality
imply
\begin{equation}\label{e:PMPquasiorthogonalityProof6}
\begin{aligned}
 &2\sum_{j=\ell}^{M} (\varphi, \sigma_j  
      - \Pi_{X_h(\tri_{j-1})} \sigma_j)_{L^2(\Omega)}\\
 &\quad= 2\sum_{j=\ell}^{M} 
   (\Pi_{X_h(\tri_j)}\varphi-\Pi_{X_h(\tri_{j-1})}\varphi, 
                     \sigma_j - \Pi_{X_h(\tri_{j-1})} \sigma_j)_{L^2(\Omega)}\\
 &\quad\leq 2 \sum_{j=\ell}^{M} 
     \|\Pi_{X_h(\tri_j)}\varphi-\Pi_{X_h(\tri_{j-1})}\varphi\|_{L^2(\Omega)}^2
   + \frac{1}{2} \sum_{j=\ell}^{M} 
         \|\sigma_j - \Pi_{X_h(\tri_{j-1})} \sigma_j\|_{L^2(\Omega)}^2.
\end{aligned}
\end{equation}
The orthogonality 
$\Pi_{X_h(\tri_j)}\varphi-\Pi_{X_h(\tri_{j-m})}\varphi\bot_{L^2(\Omega)} 
X_h(\tri_{j-m})$ for all $0\leq m\leq j$ 
and the definition of $\mu_\ell$ proves
\begin{equation}\label{e:PMPquasiorthogonalityProof3}
\begin{aligned}
 &\sum_{j=\ell}^{M}  
   \|\Pi_{X_h(\tri_j)}\varphi-\Pi_{X_h(\tri_{j-1})}\varphi\|_{L^2(\Omega)}^2 
   = \|\Pi_{X_h(\tri_M)} \varphi - 
                         \Pi_{X_h(\tri_{\ell-1})}\varphi\|_{L^2(\Omega)}^2\\
  &\qquad\qquad\qquad\qquad
  = \|\Pi_{X_h(\tri_M)} 
        (\varphi - \Pi_{X_h(\tri_{\ell-1})}\varphi)\|_{L^2(\Omega)}
 \leq \mu_{\ell-1}  .
\end{aligned}
\end{equation}
The combination of 
\eqref{e:PMPquasiorthogonalityProof1}--\eqref{e:PMPquasiorthogonalityProof3} and 
$\|\sigma_j-\Pi_{X_h(\tri_{j-1})}\sigma_j\|_{L^2(\Omega)}
\leq\|\sigma_j-\sigma_{j-1}\|_{L^2(\Omega)}$ leads to 
\begin{equation}\label{e:PMPquasiorthogonalityProof2}
\begin{aligned}
 \frac{1}{2}\sum_{j=\ell}^M \|\sigma_j- \sigma_{j-1}\|_{L^2(\Omega)}^2
   &\leq 2\mu_{\ell-1}^2
     + (\varphi,\sigma_{\ell-1} - \sigma_M)_{L^2(\Omega)} .
\end{aligned}
\end{equation}
The arguments of \eqref{e:drelproof1}--\eqref{e:drelproof2} prove 
\begin{align*}
 (\sym\Curl(\alpha_M-\alpha_{\ell-1}),\sigma_{\ell-1})_{L^2(\Omega)}
  \lesssim \lambda_{\ell-1} 
       \left\|\Curl(\alpha_M-\alpha_{\ell-1})\right\|_{L^2(\Omega)}.
\end{align*}
The discrete problem \eqref{e:HOPdP},  
the discrete reliability  
$\left\|\sym\Curl(\alpha_{M}-\alpha_{\ell-1})\right\|_{L^2(\Omega)}\lesssim 
\lambda_{\ell-1}+\mu_{\ell-1}$ from Theorem~\ref{t:HOPdrel}, and 
Theorem~\ref{t:HOPcurllesssymcurl2} therefore
lead to 
\begin{equation}\label{e:PMPquasiorthogonalityProof7}
\begin{aligned}
 &(\sigma_{\ell-1}- \sigma_{M},\Pi_{X_h(\tri_{\ell-1})}\varphi)_{L^2(\Omega)}
  = (\sigma_{\ell-1}-\sigma_{M},
          \sigma_{\ell-1} + \Curl\alpha_{\ell-1})_{L^2(\Omega)}\\
 &\qquad\qquad = (\sigma_{\ell-1}-\sigma_{M},\sigma_{\ell-1})_{L^2(\Omega)}
  = (\sym\Curl(\alpha_{M}-\alpha_{\ell-1}),\sigma_{\ell-1})_{L^2(\Omega)}\\
 &\qquad\qquad \lesssim \lambda_{\ell-1} 
    \left\|\Curl(\alpha_{M}-\alpha_{\ell-1})\right\|_{L^2(\Omega)}
   \lesssim \lambda_{\ell-1}^2+\mu_{\ell-1}^2.
\end{aligned}
\end{equation}
This and a further application of Theorem~\ref{t:HOPdrel} 
leads to 
\begin{equation}\label{e:PMPquasiorthogonalityProof8}
\begin{aligned}
 &(\varphi,\sigma_{\ell-1}-\sigma_{M})_{L^2(\Omega)} \\
 &\qquad\;
  = (\varphi-\Pi_{X_h(\tri_{\ell-1})}\varphi,
              \sigma_{\ell-1}-\sigma_{M})_{L^2(\Omega)} + 
   (\sigma_{\ell-1}-\sigma_{M},\Pi_{X_h(\tri_{\ell-1})}\varphi)_{L^2(\Omega)}\\
 &\qquad\;
  \lesssim \|\varphi-\Pi_{X_h(\tri_{\ell-1})}\varphi\|_{L^2(\Omega)}\;
     \|\sigma_{\ell-1}-\sigma_{M}\|_{L^2(\Omega)} 
    + (\lambda_{\ell-1}+\mu_{\ell-1})_{L^2(\Omega)}^2\\
  &\qquad\;
   \lesssim \lambda_{\ell-1}^2+\mu_{\ell-1}^2.
\end{aligned}
\end{equation}
The combination of \eqref{e:PMPquasiorthogonalityProof2} with 
\eqref{e:PMPquasiorthogonalityProof8} implies
\begin{align}\label{e:PMPquasiorthogonalityProof5}
 \sum_{j=\ell}^M \|\sigma_j- \sigma_{j-1}\|_{L^2(\Omega)}^2
    \lesssim \lambda_{\ell-1}^2+\mu_{\ell-1}^2.
\end{align}
The Young inequality,
the triangle inequality, and 
$\sym\Curl\alpha_j = \Pi_{X_h(\tri_j)} \varphi - \sigma_j$ 
imply
\begin{align*}
 &\sum_{j=\ell}^M
   \left\|\sym\Curl(\alpha_j - \alpha_{j-1})\right\|_{L^2(\Omega)}^2 \\
 &\qquad\qquad 
   \leq 2 \sum_{j=\ell}^M \|\sigma_j - \sigma_{j-1}\|_{L^2(\Omega)}^2
   + 2 \sum_{j=\ell}^M 
      \|\Pi_{X_h(\tri_j)}\varphi - 
                               \Pi_{X_h(\tri_{j-1})}\varphi\|_{L^2(\Omega)}^2.
\end{align*}
Since $M>\ell$ is arbitrary,
the combination with \eqref{e:PMPquasiorthogonalityProof3} 
and \eqref{e:PMPquasiorthogonalityProof5}
yields the assertion.
\end{proof}

\subsection{(B) data approximation}\label{ss:HOPaxiomB}

The following theorem states quasi-monotonicity and sub-additivity
for the data-ap\-prox\-ima\-tion error estimator $\mu$.
This theorem implies that Assumption~\ref{as:HOPB1} is satisfied 
if the algorithm \texttt{Approx} from \cite{BinevDeVore2004,
BinevDahmenDeVore2004,CarstensenRabus} is used in the second marking step 
($\mu_\ell^2\geq\kappa\lambda_\ell^2$)
in Algorithm~\ref{a:HOPafem} \cite{CarstensenRabus}.

\begin{theorem}[(B2) quasi-monotonicity and (SA) sub-additivity]
Any admissible refinement $\tri_\star$ of $\tri$ satisfies 
\begin{align*}
  \mu^2(\tri_\star) \leq \mu^2(\tri)
 \qquad\text{and}\qquad 
  \sum_{\substack{T\in\tri_\star\\
             T\subseteq K}}
   \mu^2(T) \leq \mu^2(K)
 \qquad\text{for all }K\in \tri.
\end{align*}
\end{theorem}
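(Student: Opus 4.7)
The plan is to reduce both claims to the best-approximation characterization of the local $L^2$ projection, using the fact that $P_k(\tri;\Sym(m))$ carries no inter-element continuity, so that $\Pi_k$ acts element-by-element. In particular, for any triangle $T$ (from any triangulation), one has $\mu^2(T)=\min_{p\in P_k(T;\Sym(m))}\|\sym\varphi-p\|_{L^2(T)}^2$.

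I would first establish sub-additivity on a single coarse element. Fix $K\in\tri$ and set $\mathcal{C}:=\{T\in\tri_\star\mid T\subseteq K\}$; since $\tri_\star$ is an admissible refinement of $\tri$, $K=\bigcup_{T\in\mathcal{C}} T$ with essentially disjoint children. Denote by $p_K\in P_k(K;\Sym(m))$ the local $L^2$-best approximation of $\sym\varphi$ on $K$, so that $\mu^2(K)=\|\sym\varphi-p_K\|_{L^2(K)}^2$. For every child $T\in\mathcal{C}$, the restriction $p_K|_T$ lies in $P_k(T;\Sym(m))$, and therefore the best-approximation property of the local projection on $T$ yields
\[
\mu^2(T)\;=\;\min_{q\in P_k(T;\Sym(m))}\|\sym\varphi-q\|_{L^2(T)}^2\;\le\;\|\sym\varphi-p_K|_T\|_{L^2(T)}^2.
\]
Summing over $T\in\mathcal{C}$ and using additivity of the integral over the partition of $K$ gives $\sum_{T\in\mathcal{C}}\mu^2(T)\le\|\sym\varphi-p_K\|_{L^2(K)}^2=\mu^2(K)$, which is (SA).

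Quasi-monotonicity (B2) then follows by summing (SA) over $K\in\tri$: since every $T\in\tri_\star$ is contained in exactly one $K\in\tri$, we obtain
\[
\mu^2(\tri_\star)\;=\;\sum_{K\in\tri}\sum_{\substack{T\in\tri_\star\\ T\subseteq K}}\mu^2(T)\;\le\;\sum_{K\in\tri}\mu^2(K)\;=\;\mu^2(\tri).
\]
For coarse triangles $K\in\tri\cap\tri_\star$ that are not refined, the inner sum reduces to the single term $\mu^2(K)$, which is consistent with the inequality.

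There is no real obstacle here: the entire argument rests on the nestedness $P_k(K;\Sym(m))\subseteq\{q:K\to\Sym(m)\mid q|_T\in P_k(T;\Sym(m))\text{ for all }T\in\mathcal{C}\}$, i.e.\ that the coarse polynomial space is contained in the refined (discontinuous) polynomial space on $K$, and the fact that the $L^2$-projection is the orthogonal projection onto that larger space on each child. No duality, no inverse estimates, no trace inequalities are required.
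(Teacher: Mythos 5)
Your argument is correct and is exactly the reasoning the paper compresses into its one-line proof (``this follows directly from the definition of $\mu$''): since $\Pi_k$ acts elementwise, $\mu^2(T)$ is the local $L^2$ best-approximation error on $T$, and nestedness of $P_k(K;\Sym(m))$ in the piecewise polynomial space over the children of $K$ gives (SA), with (B2) following by summation over $K\in\tri$. No issues.
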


\begin{proof}
This follows directly from the definition of $\mu$.
\end{proof}

\section{Numerical experiments}\label{s:HOPnumerics}

This section is devoted to numerical experiments for the plate problem 
$\Delta^2u=f$ and the sixth-order problem $-\Delta^3 u=f$.
The discretization~\eqref{e:HOPdP} is realized for 
$k=0,1$ for the plate problem and for $k=0,1,2$ for the sixth-order problem. 
The experiments compare the errors and error 
estimators on a sequence of uniformly red-refined triangulations (that is, 
the midpoints of the edges of a triangle are connected;
this generates four new triangles) with the 
errors and error estimators on a sequence of triangulations created by
Algorithm~\ref{a:HOPafem} with bulk parameter $\theta=0.1$ and $\kappa=0.5$ 
and $\rho=0.75$.

The convergence history plots are logarithmically scaled and display the error 
$\|\sigma-\sigma_h\|_{L^2(\Omega)}$ against the number of degrees of freedom
(ndof) of the linear system resulting from the Schur complement.

\subsection{Square with known solution for $m=2$}
\label{ss:HOPnumSquare}

The exact solution to 
\begin{align*}
 \Delta^2 u (x,y)= f(x,y):= &24 (x^2-2 x^3 + x^4 + y^2-2 y^3 +y^4)\\
        &\qquad +2(2-12 x+12x^2) (2-12y+12y^2)
\end{align*}
with clamped boundary conditions 
$u\vert_{\partial\Omega}=(\partial u/\partial \nu)\vert_{\partial\Omega}=0$ 
reads
\begin{align*}
 u(x,y) = x^2 (1-x)^2 y^2 (1-y)^2.
\end{align*}
Define $\varphi=(\varphi_{jk})_{1\leq j,k\leq 2}\in H(\ddiv^2,\Omega)$ by
\begin{align*}
 \varphi_{11}&:= 24 (x^4/12 - x^5/10 + x^6/30) 
          + (x^2-2 x^3 +x^4) (2-12y+12y^2),\\
 \varphi_{22}&:= 24 (y^4/12 - y^5/10 + y^6/30) 
          + (y^2-2 y^3 +y^4) (2-12x+12x^2),\\
 \varphi_{12}&:=\varphi_{21}:=0.
\end{align*}
Then $\ddiv^2\varphi=f$ and $\varphi$ is an admissible right-hand side 
for \eqref{e:HOPdP}.

\begin{figure}
 \begin{center}
 \includegraphics{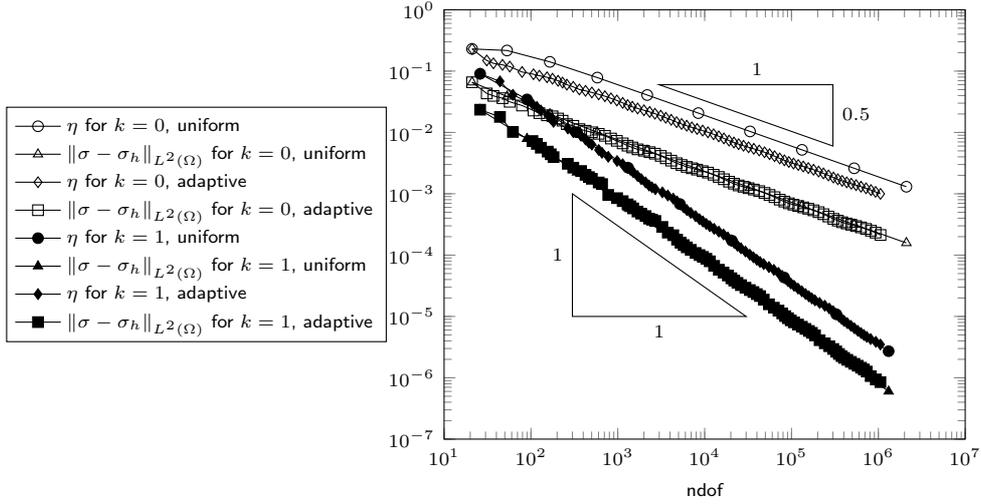}
 \end{center}
 \caption{\label{f:HOPnumSquare}Errors and error estimators for 
 the experiment on the square from Subsection~\ref{ss:HOPnumSquare}.}
\end{figure}

The errors $\|\sigma-\sigma_h\|_{L^2(\Omega)}$ and error estimators 
$\sqrt{\lambda^2+\mu^2}$ are plotted in Figure~\ref{f:HOPnumSquare}
versus the degrees of freedom. The errors and error 
estimators show an equivalent behaviour with an overestimation factor of 
approximately 10. The errors and error estimators show a convergence 
rate of $\texttt{ndof}^{-1/2}$ for $k=0$ and of $\texttt{ndof}^{-1}$ for $k=1$
on the sequence of uniformly red-refined triangulations as well as on the sequence 
of triangulations generated by Algorithm~\ref{a:HOPafem}. 
All marking steps in Algorithm~\ref{a:HOPafem} for 
$k=0,1$ applied the D\"orfler marking ($\mu_\ell^2\leq\kappa\lambda_\ell^2$).

\subsection{L-shaped domain with unknown solution for $m=2$}
\label{ss:HOPnumLshaped}

This subsection considers the problem 
\begin{align*}
 \Delta^2 u=1
\end{align*}
on the L-shaped domain $\Omega:=(-1,1)^2\setminus([0,1]\times[-1,0])$
with clamped boundary conditions 
$u\vert_{\partial\Omega}=(\partial u/\partial \nu)\vert_{\partial\Omega}=0$ 
and unknown solution.
Define the right-hand side 
$\varphi\in H(\ddiv^2,\Omega)$
with $\ddiv^2\varphi=1$ by
\begin{align*}
 \varphi(x,y):= \begin{pmatrix}
             x^2/4 & 0\\
              0 & y^2/4
           \end{pmatrix}.
\end{align*}

\begin{figure}
\begin{center}
\includegraphics{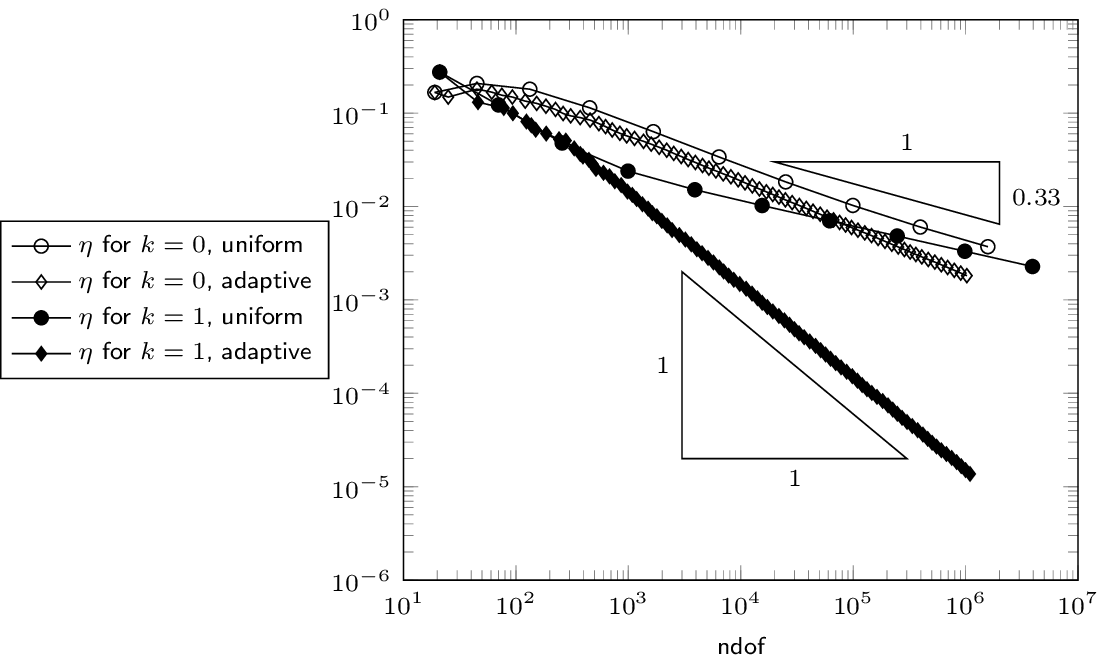}
\end{center}
 \caption{\label{f:HOPnumLshaped}Error estimators for 
 the experiment on the L-shaped domain from Subsection~\ref{ss:HOPnumLshaped}.}
\end{figure}

\begin{figure}
 \begin{center}
  \includegraphics[width=0.32\textwidth]
      {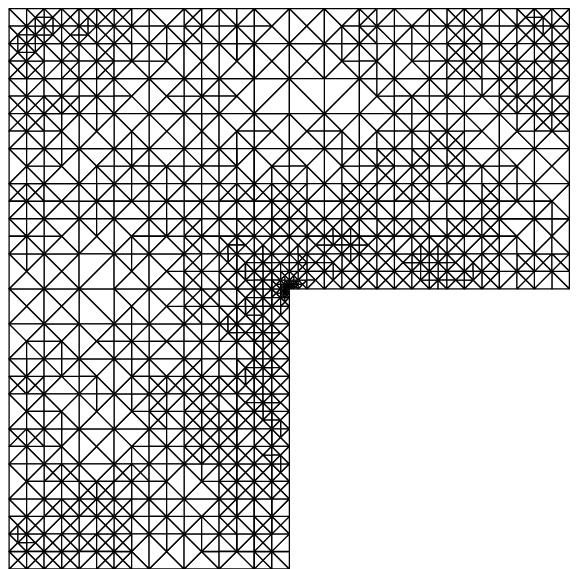}
  \includegraphics[width=0.32\textwidth]
      {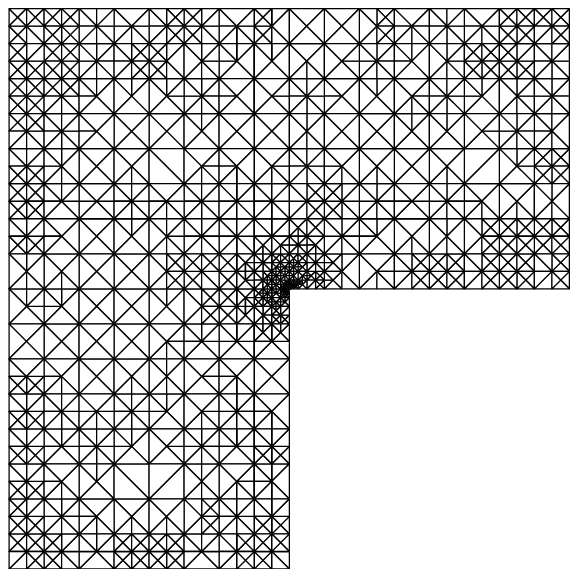}
 \end{center}
  \caption[Adaptively refined triangulations for the experiment on the L-shaped domain from 
Subsection~\ref{ss:HOPnumLshaped}.]{\label{f:HOPplateLshapedTriang}Adaptively refined 
triangulations for $k=0$ with 1096 nodes (2195 dofs) and for $k=1$ with 1077 
nodes (5114 dofs) for the experiment on the L-shaped domain from 
Subsection~\ref{ss:HOPnumLshaped}.}
\end{figure}

The error estimators $\sqrt{\lambda^2+\mu^2}$ 
are plotted in Figure~\ref{f:HOPnumLshaped}
versus the degrees of freedom. For uniform mesh-refinement the convergence 
rate of the error estimator for $k=1$ is $\texttt{ndof}^{-1/3}$. The convergence 
rate for $k=0$ is slightly larger, but the size of the error estimator is 
larger than for $k=1$. This suggests that the observed higher convergence rate 
is a preasymptotic effect.
On the sequences of triangulations generated by Algorithm~\ref{a:HOPafem},
the error estimators show the optimal convergence rates of $\texttt{ndof}^{-1/2}$ and 
$\texttt{ndof}^{-1}$ for $k=0$ and $k=1$, respectively. 
Figure~\ref{f:HOPplateLshapedTriang} displays triangulations with 
approximately 1000 vertices generated by Algorithm~\ref{a:HOPafem}
for $k=0$ and $k=1$. A stronger refinement towards the re-entrant corner
is clearly visible.
The marking with respect to the data-approximation 
($\mu_\ell^2>\kappa\lambda_\ell^2$ in Algorithm~\ref{a:HOPafem}) is only 
applied at the first two levels for $k=0$. All other marking steps for 
$k=0,1$ use the D\"orfler marking ($\mu_\ell^2\leq\kappa\lambda_\ell^2$).

\subsection{Square for $m=3$}\label{ss:HOPnumm3Square}

In this subsection, let $\Omega=(0,1)^2$ be the unit square and 
$u\in H^3_0(\Omega)$ be defined by
\begin{align*}
 u(x,y) = x^3 (1-x)^3 y^3 (1-y)^3
\end{align*}
with corresponding right-hand side $f:=-\Delta^3 u$.
Let $\varphi=(\varphi_{jk\ell})_{1\leq j,k,\ell\leq 2}\in H(\ddiv^3,\Omega)$
be defined by 
\begin{equation}\label{e:defRHSsquarem3}
\begin{aligned}
 \varphi_{111}(x,y)&:=-\frac{1}{2} \int_0^x \int_0^s \int_0^t f(\xi,y)\,d\xi\,dt\,ds,\\
 \varphi_{222}(x,y)&:=-\frac{1}{2}\int_0^y \int_0^s \int_0^t f(x,\xi)\,d\xi\,dt\,ds,\\
 \varphi_{112}&:=\varphi_{121}:=\varphi_{122}:=\varphi_{211}
   :=\varphi_{212}:=\varphi_{221}:=0.
\end{aligned}
\end{equation}
Then $-\ddiv^3\varphi=f$ and $\varphi$ is an admissible right-hand 
side for \eqref{e:HOPdP}.

\begin{figure}
\begin{center}
 \includegraphics{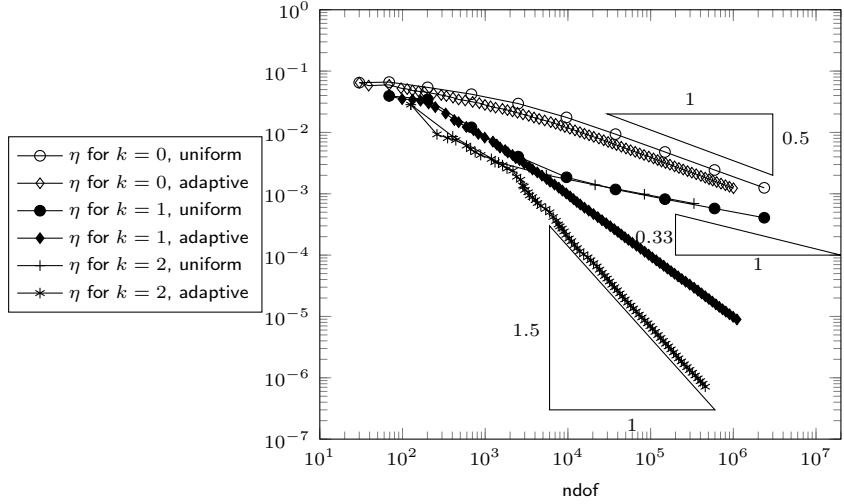}
\end{center}
 \caption{\label{f:HOPnumm3Square}Errors and error estimators for 
 the experiment on the square for $m=3$ from Subsection~\ref{ss:HOPnumm3Square}.
 The dashed lines correspond to the right-hand side generated 
 by the solution of three successive Poisson problems.}
\end{figure}

\begin{figure}
 \begin{center}
  \includegraphics[width=0.32\textwidth]
      {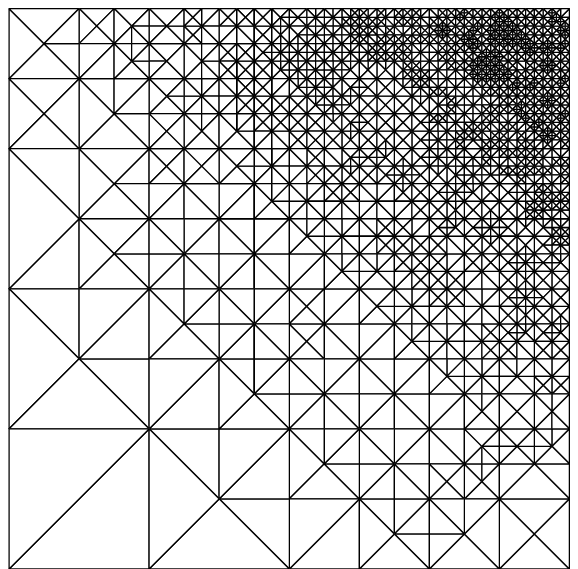}
  \includegraphics[width=0.32\textwidth]
      {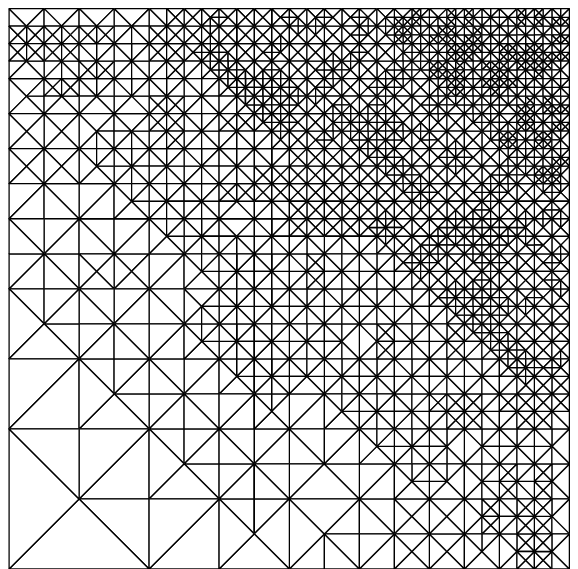}
  \includegraphics[width=0.32\textwidth]
      {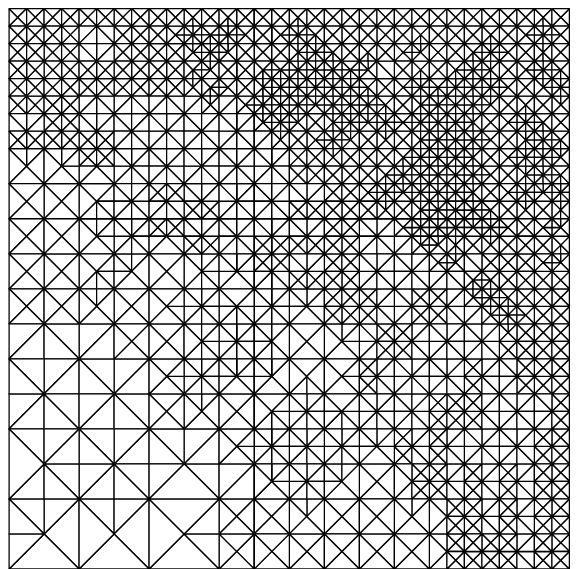}
 \end{center}
  \caption[Adaptively refined triangulations for the experiment on the square from 
Subsection~\ref{ss:HOPnumm3Square}.]{\label{f:HOPnumm3SquareTriang}Adaptively refined 
triangulations for $k=0$ with 1460 nodes (4386 dofs), for $k=1$ with 1555 
nodes (19369 dofs), and for $k=2$ with 1547 nodes (40833 dofs) 
for the experiment on the square from 
Subsection~\ref{ss:HOPnumm3Square}.}
\end{figure}

The errors $\|\sigma-\sigma_h\|_{L^2(\Omega)}$ and error estimators 
$\sqrt{\lambda^2+\mu^2}$ are plotted in Figure~\ref{f:HOPnumm3Square}
versus the number of degrees of freedom. 
The errors show the optimal convergence rates of $\texttt{ndof}^{-1/2}$, 
$\texttt{ndof}^{-1}$, and $\texttt{ndof}^{-3/2}$ for $k=0,1,2$ for uniform refinement
as well as for the sequence of triangulations generated by Algorithm~\ref{a:HOPafem}.
The error estimators for $k=0,1,2$ show an equivalent behaviour as the respective 
errors with an overestimation between 3 and 9. 

Although the convergence rates are optimal, one has to consider that the $H^3$-seminorm 
of the exact solution $\|\sigma\|_{L^2(\Omega)}$ is approximately $2\times 10^{-2}$.
That means that the relative errors for $k=1$ (resp.\ $k=2$) are larger than $100\%$ up to 
$10^5$ (resp.\ $10^4$) degrees of freedom and for $k=0$, they do not even reach 
this threshold.
While the $L^2$ norm of the function $\sigma$ of interest is approximately $10^{-2}$,
the $L^2$ norm of $\varphi$ (and thus $\left\|\Curl\alpha\right\|_{L^2(\Omega)}$)
is approximately 80. The best-approximation result~\eqref{e:HOPapriori} 
therefore seems to suffer from the large term 
\begin{align*}
 \inf_{\beta_h\in Y_h(\tri)}\left\|\Curl(\alpha-\beta_h)\right\|_{L^2(\Omega)}
\end{align*}
on the right-hand side.

A second choice for the right-hand side $\varphi$ should indicate one 
possibility to decrease the error. To this end, define 
$\widetilde{\varphi}:=\nabla w_3$ with
$(w_1,w_2,w_3)\in H^1_0(\Omega)\times H^1_0(\Omega;\R^2)\times H^1_0(\Omega;\R^{2\times 2})$
the solution of
\begin{equation}\label{e:computeRHSbyPMP}
\begin{aligned}
  (\nabla w_1,\nabla v)_{L^2(\Omega)} &= (f,v)_{L^2(\Omega)}
    &&\qquad\text{for all }v\in H^1_0(\Omega)\\
  (\nabla w_2,\nabla v)_{L^2(\Omega)} &= (\nabla w_1,v)_{L^2(\Omega)}
    &&\qquad\text{for all }v\in H^1_0(\Omega;\R^2)\\
  (\nabla w_3,\nabla v)_{L^2(\Omega)} &= (\nabla w_2,v)_{L^2(\Omega)}
    &&\qquad\text{for all }v\in H^1_0(\Omega;\R^{2\times 2}).
\end{aligned}
\end{equation}
Then $-\ddiv^3\widetilde{\varphi}=f$ and the computations are performed with 
the approximation $\widetilde{\varphi}_h$ of $\widetilde{\varphi}$ 
computed by the approximation of the Poisson problems~\eqref{e:computeRHSbyPMP}
by standard conforming FEMs of degree $k$.
The errors for this right-hand side are included in 
Figure~\ref{f:HOPnumm3Square} for $k=0,1,2$ with dashed lines.
The errors show the optimal convergence rates and the size of the errors are 
reduced by a factor between $10^2$ and $10^3$ compared to the errors for 
the right-hand side given by~\eqref{e:defRHSsquarem3}. In this situation,
the error is below $100\%$ for all triangulations.

Figure~\ref{f:HOPnumm3SquareTriang} displays triangulations with 
approximately 1500 vertices generated by Algorithm~\ref{a:HOPafem}
for $k=0,1,2$. Although the solution is smooth, a strong refinement towards 
the corner $(1,1)$ 
can be observed for $k=0$. For $k=1$, there is a slight refinement 
towards the corner $(1,1)$, while for $k=2$, the refinement is nearly uniform.
Since the relative errors for $k=0,1$ are still over $100\%$ on these 
triangulations, the discrete solution probably do not 
reflect the behaviour of the exact smooth solution.
However, the convergence rates are optimal and the error is slightly 
smaller compared with the uniform refinement. This is in agreement with 
Theorem~\ref{t:HOPoptimalafem}.

All marking steps in Algorithm~\ref{a:HOPafem} for 
$k=0,1,2$ used the D\"orfler marking ($\mu_\ell^2\leq\kappa\lambda_\ell^2$).

\subsection{L-shaped domain for $m=3$}
\label{ss:HOPnumm3Lshaped}

This section considers the problem: Find $u\in H^3_0(\Omega)$ with
\begin{align*}
 -\Delta^3 u=1
\end{align*}
and homogeneous Dirichlet boundary conditions on
the L-shaped domain 
$\Omega:=(-1,1)^2\setminus ([0,1]\times[-1,0])$.
Let $\varphi=(\varphi_{jk\ell})_{1\leq j,k,\ell\leq 2}\in H(\ddiv^3,\Omega)$
be defined by 
\begin{align*}
 \varphi_{111}(x,y)&:= -x^3/12,\\
 \varphi_{222}&:=-y^3/12,\\
 \varphi_{112}&:=\varphi_{121}:=\varphi_{122}:=\varphi_{211}
   :=\varphi_{212}:=\varphi_{221}:=0
\end{align*}
Then $-\ddiv^3\varphi=1$ and $\varphi$ is an admissible right-hand side for \eqref{e:HOPdP}.

\begin{figure}
\begin{center}
 \includegraphics{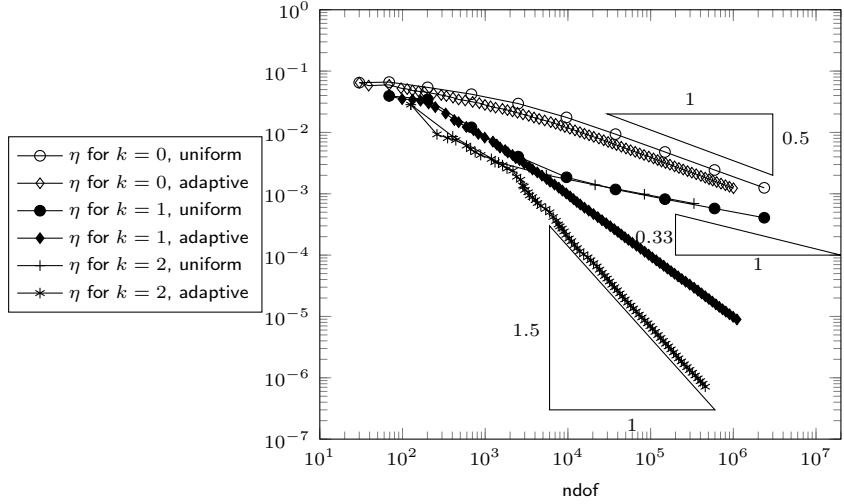}
\end{center}
 \caption{\label{f:HOPnumm3Lshaped}Errors and error estimators for 
 the experiment on the L-shaped domain for $m=3$ from Subsection~\ref{ss:HOPnumm3Lshaped}.}
\end{figure}

\begin{figure}
 \begin{center}
  \includegraphics[width=0.32\textwidth]
      {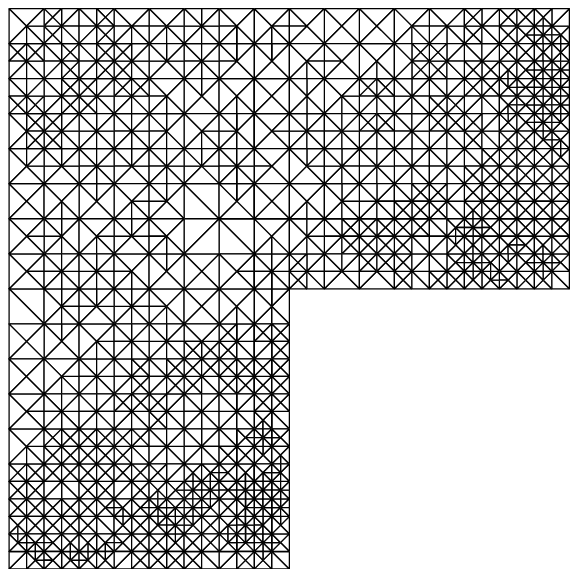}
  \includegraphics[width=0.32\textwidth]
      {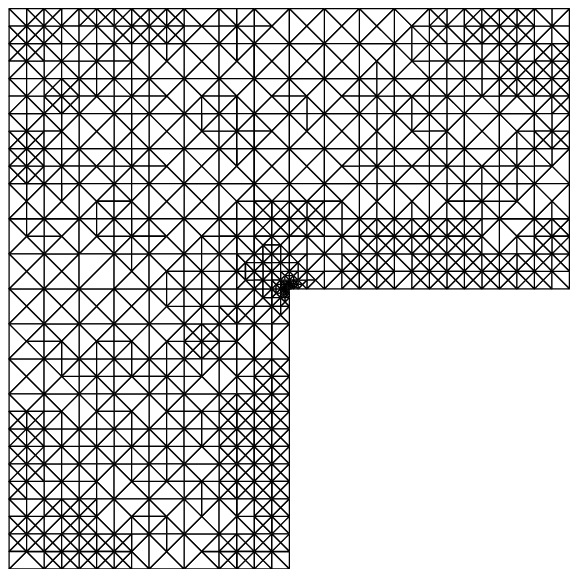}
  \includegraphics[width=0.32\textwidth]
      {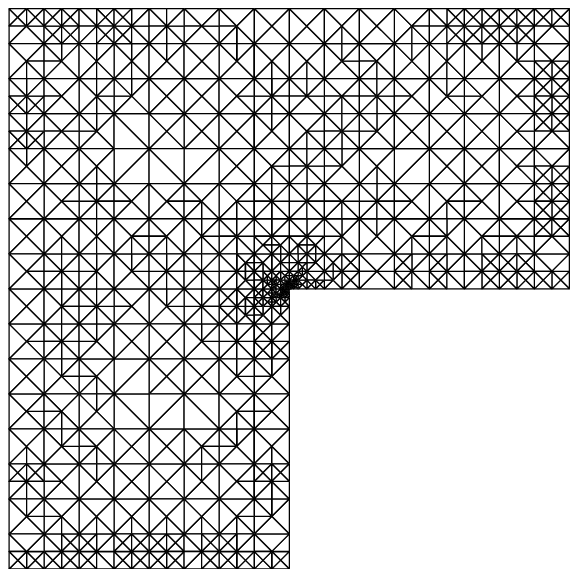}
 \end{center}
  \caption[Adaptively refined triangulations for the experiment on the L-shaped domain from 
Subsection~\ref{ss:HOPnumm3Lshaped}.]{\label{f:HOPnumm3LTriang}Adaptively refined 
triangulations for $k=0$ with 1118 nodes (3360 dofs), for $k=1$ with 1005 
nodes (11679 dofs), and for $k=2$ with 1004 nodes (25875 dofs) 
for the experiment on the L-shaped domain from 
Subsection~\ref{ss:HOPnumm3Lshaped}.}
\end{figure}

Since the exact solution is not known, only the error estimators 
$\sqrt{\lambda^2+\mu^2}$ are plotted in Figure~\ref{f:HOPnumm3Lshaped}
for $k=0,1,2$ on a sequence of uniformly red-refined triangulations and 
on a sequence generated by Algorithm~\ref{a:HOPafem}.
On the sequence of uniformly refined meshes, the error estimators for $k=1,2$ 
show a convergence rate of $\texttt{ndof}^{-1/3}$, while the error estimator 
for $k=0$ converges with rate $1/2$. However, this error 
estimator is of larger size than the error estimators for $k=1,2$ and 
it is therefore expected that the higher rate is a preasymptotic effect.
Algorithm~\ref{a:HOPafem} leads to the optimal convergence 
rates of $\texttt{ndof}^{-1/2}$ for $k=0$, $\texttt{ndof}^{-1}$ for $k=1$, 
and $\texttt{ndof}^{-3/2}$ for $k=2$.

Figure~\ref{f:HOPnumm3LTriang} displays triangulations with 
approximately 1000 vertices generated by Algorithm~\ref{a:HOPafem}
for $k=0,1,2$. The strong refinement towards the re-entrant corner is 
clearly visible for $k=1,2$, while for $k=0$ the refinement is quasi-uniform.
This is in agreement with the observed convergence rate for $k=0$ and the 
interpretation that the behaviour of the exact solution is not reflected 
in the discrete solution up to this number of degrees of freedom.
The marking with respect to the data-approximation 
($\mu_\ell^2>\kappa\lambda_\ell^2$ in Algorithm~\ref{a:HOPafem}) is only 
applied at levels 1 and 2 for $k=0$. All other marking steps for 
$k=0,1,2$ use the D\"orfler marking ($\mu_\ell^2\leq\kappa\lambda_\ell^2$).

\newcommand{\etalchar}[1]{$^{#1}$}

\end{document}